\theoremstyle {definition}
\newtheorem{thm}{Theorem}[section]
\newtheorem{Lemma}[thm]{Lemma}
\newtheorem{Proposition}[thm]{Proposition}
\newtheorem{cor}[thm]{Corollary}
\theoremstyle{definition}
\newtheorem{Remark}[thm]{Remark}
\newtheorem{Example}[thm]{Example} 
\newtheorem{Definition}[thm]{Definition}
\def\acala            {{}_\cala \cala}
\def\acalm            {{}_\cala \calm}
\def\acalaa           {{}_\cala \cala_\cala}
\def\Aop              {{A^\opp_{\phantom i}}}
\def\Awee             {{A^\wee_{\phantom i}}}
\def\be               {\begin{equation}}
\def\bearl            {\begin{array}{l}}
\def\bearll           {\begin{array}{ll}}
\def\Bimod            {\text{-bimod}}
\def\BimoD            {\text{-bimod-}}
\def\boti             {\,{\boxtimes}\,}
\def\Bop              {{B^\opp_{\phantom i}}}
\def\cala             {{\mathcal A}}
\def\Cala             {{\!\mathcal A}}
\def\calaopp          {{\mathcal A^\opp_{\phantom|}}} 
\def\Calaopp          {{\!\mathcal A^\opp_{\phantom|}}}
\def\calb             {{\mathcal B}}
\def\calbopp          {{\mathcal B^\opp_{\phantom|}}}
\def\calc             {{\mathcal C}}
\def\calcopp          {{\mathcal C^\opp_{\phantom|}}}
\def\cald             {{\mathcal D}}
\def\caldopp          {{\mathcal D^\opp_{\phantom|}}}
\def\calm             {{\mathcal M}}
\def\calma            {{\mathcal M}_{\!\cala}}
\def\calmopp          {{\mathcal M^\opp_{\phantom|}}}
\def\caln             {{\mathcal N}}
\def\calna            {{\mathcal N}_{\!\cala}}
\def\calx             {{\mathcal X}}
\def\cir              {\,{\circ}\,}
\def\circtensor       {\setbox0\hbox{\large$\circlearrowleft$} \rlap{\hbox to\wd0{\hss$\times$\hss}}\box0}
\def\circtensorsmall  {\setbox0\hbox{$\circlearrowleft$} \rlap{\hbox to\wd0{\hss$\times$\hss}}\box0}
\def\coev             {\beta}
\def\coevl            {\mathrm{coev}^{\rm l}}
\def\coevr            {\mathrm{coev}^{\rm r}}
\def\delt             {convolution}   
\def\dsty             {\displaystyle }
\def\ee               {\end{equation}}
\def\eear             {\end{array}}
\def\eq               {\,{=}\,}
\def\End              {{\rm End}}
\def\ev               {\delta}
\def\evl              {\mathrm{ev}^{\rm l}}
\def\evr              {\mathrm{ev}^{\rm r}}
\def\findim           {fini\-te-di\-men\-si\-o\-nal}
\def\Fun              {{\mathcal Fun}}
\def\Funle            {{\mathcal Lex}}
\def\Funre            {{\mathcal Rex}}
\def\Gammalr          {\Gamma^{\rm lr}}
\def\Gammarl          {\Gamma^{\rm rl}}
\newcommand\Ho[3]     {{}_{#1}^{}\Langle{#2}\,,{#3}\Rangle}
\newcommand\HO[3]     {{}_{#1}^{\phantom|}\big\Langle{#2}\,,{#3}\big\Rangle}
\def\Hom              {{\rm Hom}}
\newcommand\Holr[4]   {{}_{#1|#2}^{}\Langle{#3}\,,{#4}\Rangle}
\newcommand\Hop[3]    {{}_{#1}\Langle{{#2}}\,,{#3}\Rangle}
\newcommand\Hox[3]    {{}_{#1}^{}\Langle{#2}\,,{#3}\Rangle_{}^\wee}
\def\id               {{\rm id}}
\def\IHom             {\underline{\rm Hom}}
\def\iN               {\,{\in}\,}
\def\ko               {{\ensuremath{\Bbbk}}}
\def\la               {{\rm l.a.}}
\def\Langle           {\langle }
\def\lla              {{\rm l.l.a.}}
\def\llMrr            {{}^{{\rm ll}\!\!}\calm^{{\rm rr}}}
\def\Mod              {\text{-mod}}
\def\moD              {\text{mod-}}
\def\mwee             {{m^\wee_{\phantom i}}}
\def\Nat              {\mathrm{Nat}}
\newcommand\Nxl[1]    {\\[-1.3em]\\[#1mm]}
\def\ol               {\overline }
\def\one              {{\bf1}}
\def\opp              {\mathrm{opp}}
\def\otA              {\,{\otimes_{\!A}}\,}
\def\otB              {\,{\otimes_{\!B}}\,}
\def\oti              {\,{\otimes}\,}
\def\otik             {\,{\otimes_\ko}\,}
\def\pift             {{\rm N}^{\rm r}}
\def\pifu             {{\rm N}^{\rm l}}
\def\Phile            {\Phi^{\rm l}}
\def\Phire            {\Phi^{\rm r}}
\def\Psile            {\Psi^{\rm l}}
\def\Psire            {\Psi^{\rm r}}
\def\ra               {{\rm r.a.}}
\def\Rangle           {\rangle }
\def\rra              {{\rm r.r.a.}}
\def\rrMll            {{}^{{\rm rr}\!}\calm^{{\rm ll}}}
\def\rrNll            {{}^{{\rm rr}\!}\caln^{{\rm ll}}}
\def\Sl               {{\rm S}^{\rm l}}
\def\Sr               {{\rm S}^{\rm r}}
\def\sss              {\scriptscriptstyle}
\def\Times            {\,{\times}\,}
\def\To               {\,{\to}\,}
\def\vect             {\ensuremath{\mathrm{vect}}}
\def\Vee              {{}^{\vee\!}}
\newcommand\void[1]   {}
\def\wee              {*}  
\begin{document}

\numberwithin{equation}{section}

\thispagestyle{empty}
\begin{flushright}
   {\sf ZMP-HH/16-26}\\
   {\sf Hamburger$\;$Beitr\"age$\;$zur$\;$Mathematik$\;$Nr.$\;$630}\\[2mm]
\end{flushright}
\vskip 2.0em

\begin{center}{\bf \Large 
Eilenberg-Watts calculus for finite categories
\\[6pt]
and a bimodule Radford {\boldmath $S^4$} theorem}

\vskip 18mm

 {\large \  \ J\"urgen Fuchs\,$^{\,a,c}, \quad$ Gregor Schaumann\,$^{\,b}, \quad$
 Christoph Schweigert\,$^{\,c}$
 }

 \vskip 12mm

 \it$^a$
 Teoretisk fysik, \ Karlstads Universitet\\
 Universitetsgatan 21, \ S\,--\,651\,88\, Karlstad \\[9pt]
 \it$^b$
 Fakult\"at f\"ur Mathematik, \
 Universit\"at Wien\\
 Oskar-Morgenstern-Platz 1,
 \ A\,--\,10\,90 
   Wien
 \\[9pt]
 \it$^c$
 Fachbereich Mathematik, \ Universit\"at Hamburg\\
 Bereich Algebra und Zahlentheorie\\
 Bundesstra\ss e 55, \ D\,--\,20\,146\, Hamburg

\end{center}

\vskip 3.2em

\noindent{\sc Abstract}\\[3pt]
We obtain Morita invariant versions of Eilenberg-Watts type theorems, relating
Deligne products of finite linear categories to categories of left exact as well as 
of right exact functors. 
This makes it possible to switch between different functor categories as well as 
Deligne products, which is often very convenient. For instance, 
we can show that applying the equivalence from left exact to right exact functors to
the identity functor, regarded as a left exact functor, gives a Nakayama functor.
The equivalences of categories we
exhibit are compatible with the structure of module categories over
finite tensor categories. This leads to a generalization of Radford's
$S^4$-theorem to bimodule categories. We also explain the relation of our
construction to relative Serre functors on module categories
that are constructed via inner Hom functors.



\setcounter{footnote}{0} \def\thefootnote{\arabic{footnote}} 

\vskip 4em

 \newpage

\section{Introduction}

A classical result in algebra, the Eilenberg-Watts theorem \cite{eile4,wattC}, states
that, given two unital rings $R$ and $S$, any right exact functor $F\colon R\Mod\To S\Mod$ 
that preserves small coproducts is naturally isomorphic to tensoring with an $S$-$R$-bimodule.
Further, this bimodule can be expressed explicitly through the functor $F$, 
namely as the left $S$-module $F(_RR)$ endowed with a natural right $R$-action.

While this formulation of the statement is extremely useful, it hides the categorical 
nature of the situation: For a category that can be realized as the category of
modules over a ring, that ring is determined only up to Morita equivalence.
But nevertheless the bimodule in the theorem appears to make explicit use of the
choice of ring. This can pose a problem in situations in which the
category is given in more abstract terms and a Morita invariant formulation is desired.

Moreover, it is known that similar results exist for left exact functors, expressing them, 
under certain finiteness conditions, in terms of a Hom functor (see e.g.\ \cite{ivanS})
applied to a bimodule. When taken together, for sufficiently nice categories
one thus deals with two categories -- of left exact functors and of right exact functors, 
respectively -- that should both be related to suitable categories of bimodules.

In the present paper, we place ourselves under the following finiteness conditions,
which in particular allows us to rely on results from \cite{etno2,shimi7}
and to relate them to the Eilenberg-Watts equivalences:
We fix an algebraically closed field \ko\ and work with finite \ko-linear categories, i.e.\
categories that are equivalent to categories of finite-dimensional
modules over a finite-dimensional \ko-algebra.
This ensures the existence of various categorical constructions, in particular of the 
Deligne product and of certain ends and coends. 

One might wonder to what extent the results of this article can be extended beyond 
finite categories.  For cocomplete locally presentable categories, for example,
there exists still a product $\boti$ that is universal with respect to cocontinuous functors.
However, most of the constructions of this article fail in this bigger category:  
in \cite[Thm 1.4]{brcJ} examples for such categories $\calc$ are given, where 
the categories $\calc \boti \Hom(\calc,\vect)$ and $\End(\calc)$ are not equivalent 
(here $\vect$ denotes the category of all vector spaces and $\Hom$ and $\End$ refer to 
the categories of cocontinuous functors).

\medskip

Besides giving a Morita invariant formulation of the Eilenberg-Watts equivalences, this paper 
connects these equivalences with other representation theoretic concepts, in particular
with Nakayama functors and, for the case of rigid monoidal categories, with Radford's theorem 
on the fourth power of the antipode of a Hopf algebra. 

To give a more detailed account of our findings, let us first recall that,
given two finite linear categories $\cala \eq A\Mod$ and $\calb \eq B\Mod$, the 
opposite category $\calaopp$ can be identified with the category of
right $A$-modules and the Deligne product with $B$-$A$-bi\-modules,
  \be
  \calb \boti \calaopp \cong B\BimoD A \,.
  \label{i:BbimodA}
  \ee
A first result towards a categorical formulation is 
a categorical variant of the Peter-Weyl theorem: We can express the regular $A$-bimodule
$A \eq {}_AA_A$ and the co-regular $A$-bimodule $A^* \eq {}^{}_AA_A^*$ as an end and as a 
coend, respectively, of a functor with values in $A\Bimod$; specifically, we have
  \be
  A \ = \int_{m\in A\Mod} m\boxtimes m^\wee \qquad\text{and}\qquad
  A^\wee \,= \int^{m\in A\Mod}\! m\boxtimes m^\wee
  \label{i:PW}
  \ee
as $A$-bimodules.
More generally, we show in Proposition \ref{prop:end(GotikWee} that for any
$\ko$-linear functor $G\colon A\Mod \To B\Mod$ the functor
from $A\Mod \Times A\Mod^\opp$ to $B\BimoD A$ that is defined by 
$ m \Times \ol n \,{\longmapsto}\, G(m) \,{\otik}\, n^\wee $ has as an end the 
$B$-$A$-bimodule
  \be
  \int_{m\in A\Mod} G(m) \otik m^\wee =\, G(A) \,,
  \ee
and as a coend the $B$-$A$-bimodule
  \be
  \int^{m\in A\Mod}\!\! G(m) \otik m^\wee = G(A^\wee_{})
  \ee
(with structure morphisms as given in \eqref{eq:i^A_m} and \eqref{eq:i^A*_m}, respectively).

With the help of this result we can set up a triangle 
  \be
  \begin{tikzcd}[row sep=11ex]
  ~ & \calaopp \boti \calb ~ \ar{dl}[xshift=-2pt]{\Phile} \ar[xshift=-2pt]{dr}[swap]{\Phire}
  & ~ \\
  \Funle(\cala,\calb) \ar[yshift=3pt]{rr}[yshift=1.3pt]{\Gammarl} \ar[xshift=-12pt]{ur}[xshift=2pt]{\Psile}
  & ~ & \Funre(\cala,\calb) \ar[yshift=-3pt]{ll}{\Gammalr} \ar[xshift=12pt]{ul}[swap]{\Psire}
  \end{tikzcd}
  \label{i:AoppB-lex-rex}
  \ee
of Eilenberg-Watts type equivalences of finite linear categories.
Here $\Funle(-,-)$ is the category of left exact functors and $\Funre(-,-)$ the one of right
exact functors. The Deligne product $\calb \boti \calaopp \,{\cong}\, \calaopp \boti \calb$
plays the role of the category of bimodules, as in the relation \eqref{i:BbimodA} above.
The Eilenberg-Watts equivalences for categories of left exact
functors have appeared in \cite[Rem.\,2.2(i)]{etno2} and \cite[Section\,3.4]{shimi7};
they are
  \be
  \begin{array}{rrcl}
  \Phile_{} : & \calaopp \boti \calb &\!\! \longrightarrow \!\!& \Funle(\cala,\calb)
  \Nxl1
  & \ol a \boti b &\!\! \longmapsto \!\!& \Hom_\cala (a,-) \oti b 
  \Nxl4
  \text{and} \qquad
  \Psile_{} : & \Funle(\cala,\calb) &\!\! \longrightarrow \!\!& \calaopp \boti \calb
  \Nxl1
  & F &\!\! \longmapsto \!\!& \int^{a\in\cala} \ol a \boti F(a) \,.
  \Nxl4
  \eear
  \label{i:phipsi-l}
  \ee
That these functors are quasi-inverses is essentially a consequence of the Yoneda
lemma. The case of right exact functors uses instead of the Hom-pairing $\Hom_\cala$
(which constitutes a left exact functor $\calaopp\boti\cala\To\vect$) 
a pairing that is familiar from the study of Serre functors: the right exact pairing
$\Hom_\cala(-,-)^\wee_{}$ obtained by using the vector space dual.
(Recall that the morphism spaces of the categories under consideration are \findim.)
Explicitly, we show that the following pair of functors are quasi-inverse equivalences:
  \be
  \begin{array}{rrcl}
  \Phire_{} : & \calaopp \boti \calb &\!\! \longrightarrow \!\!& \Funre(\cala,\calb)
  \Nxl1
  & \ol a \boti b &\!\! \longmapsto \!\!& \Hom_\cala( -,a)^\wee_{} \oti b
  \Nxl4
  \text{and} \qquad
  \Psire_{} : & \Funre(\cala,\calb) &\!\! \longrightarrow \!\!& \calaopp \boti \calb
  \Nxl1
  & G &\!\! \longmapsto\!\! & \int_{a\in\cala} \ol a\, \boti G(a) \,.
  \eear
  \label{i:phipsi-r}
  \ee
We note that these results provide a convenient way to think about Deligne products 
in terms of left or right exact functors, respectively. Regarding $\calaopp \boti \calb$
as a categorification of matrix elements, we describe functors in terms of matrix elements 
and obtain a categorified matrix calculus.
This should prove useful in various applications, e.g.\ in situations in which
relative tensor products and relative (twisted) centers play a role, like in
topological field theories. Specifically, one can generalize constructions
established for semisimple categories that involve sums over simple objects to the
non-semisimple case by using instead ends or coends. This can e.g.\ be a key step 
in the passage to non-semisimple variants of modular functors.

Composing the equivalences \eqref{i:phipsi-l} and \eqref{i:phipsi-r} leads to natural 
equivalences $\Gammalr$ and $\Gammarl$ between the categories of left exact and right 
exact functors. We can then in particular consider the right exact endofunctor
  \be
  \pift_\calx := \Gammarl(\id_\calx)
  \,= \int^{x\in\calx}\!\! \Hom_\calx (-,x)^* \oti x \,.
  \label{i:pift}
  \ee
For $\calx \eq A\Mod$ a category of modules, by the Peter-Weyl formula \eqref{i:PW} this 
turns out to be the Na\-ka\-yama functor 
  \be
  \pift_{A\Mod} = A^\wee\otimes_A- \,\cong\, \Hom_{A\Mod}(-,A)^\wee_{} .
  \ee
In other words, we have arrived at a Morita invariant description of the Nakayama functor.
It follows immediately from the triangle \eqref{i:AoppB-lex-rex} of
equivalences that there is also a left exact analogue of the Nakayama functor, namely
  \be
  \pifu_\calx := \Gammalr(\id_\calx)
  \,= \int_{x\in\calx} \Hom_\calx (x,-) \oti x \,.
  \label{i:pifu}
  \ee
In the case $\calx \eq A\Mod$ of modules over an algebra the so defined left exact functor is 
given by $\pifu_{A\Mod} \,{=}\, \Hom_{\moD A}(-^\wee,A) \,{\cong}\, \Hom_{A\Mod}(A^\wee,-)$.

In the Radford theorem iterated duals enter crucially. Similarly, the
following property of the Nakayama functors \eqref{i:pift} and \eqref{i:pifu},
proven in Theorem \ref{thm:picu.F=Fll.pifu}, turns out to be essential: 
For $F\colon \cala\To\calb$ a left exact functor for which the left adjoint $ F^\lla $ of 
its left adjoint $F^\la$ exists and is again left exact, there is a natural isomorphism
  \be
  \pifu_\calb \circ F \,\cong\, F^\lla \circ \pifu_\Cala
  \ee
of functors. An analogous result holds for right exact functors.  

\medskip

In the second part of this paper we consider the particular case that the finite
$\ko$-linear category $\calm$ has the additional structure of a bimodule category over 
finite tensor categories $\cala$ and $\calb$. We show, in Theorem \ref{thm:pifuMbimodfunc}, 
that in this case the left exact Nakayama functor $\pifu_\calm$ of the 
category $\calm$ has a canonical structure of a bimodule 
functor
  \be
  \pifu_\calm\colon~ \calm \to \rrMll ,
  \ee
that is, there are coherent isomorphisms
  \be
  \pifu_\calm(a.m.b) \cong a^{\vee\vee\!}.\,\pifu_\calm(m)\,.\,{}^{\!\vee\vee\!}b
  \ee
for all $m\iN\calm$, $a\iN\cala$ and $b\iN\calb$. Here $a^\vee$ is the right dual and 
${}^{\vee\!} a$ the left dual of $a\iN\cala$. Similarly (Theorem \ref{thm:piftMbimodfunc})
for the Nakayama functor there are coherent isomorphisms
  \be
  \pift_\calm(a.m.b) \cong {}^{\vee\vee\!}a.\,\pift_\calm(m)\,.b^{\vee\vee}
  \ee

These statements actually imply and generalize Radford's classical $S^4$-theorem for 
Hopf algebras; indeed, as shown in Lemma \ref{lem:pi=D...}, for $\cala$ regarded as a 
bimodule category over itself the Nakayama functors are, up to dualities,  given by 
tensoring with the distinguished invertible object
$D \iN \cala$. It follows that the quadruple dual of $\cala$ satisfies
  \be
  {-}^{\vee\vee\vee\vee} \,\cong\, D \oti {-} \oti D^{-1} .
  \ee
This is just the categorical formulation of Radford's theorem as obtained in \cite[Thm.\,3.3]{etno2}. 
   %
   Let us mention that the latter also has a topological explanation, within the framework
   of framed topological field theory: it can be obtained via the so-called belt trick, which is
   based on the fact that the fundamental group of $\mathrm{SO}(3)$ has order 2
   \cite[Sect.\,4.3]{doSs3}. In that framework, our generalized version of Radford's theorem
   would correspond to performing the
   belt trick on a belt with an embedded defect line that is labeled by the bimodule category $\calm$.
   Note, however, that in our algebraic approach the definition of the Nakayama functor is
   naturally given in the setting of finite {\em linear} categories, without assuming a 
   bimodule structure on $\calm$.

Another direct application of our results, pointed out to us by Shimizu \cite{shimiP}, is
Theorem \ref{thm:shimiP}: A finite multitensor category is equivalent as a linear category
to the category of modules over a symmetric Frobenius algebra if and only if it
is unimodular and its double dual functor is isomorphic to the identity functor.

             \medskip

Nakayama functors are closely related with Serre functors. 
Recall that a right Serre functor on a linear Hom-finite additive category $\calx$ is
an additive endofunctor $G$ together with a natural family of isomorphisms between the morphism
spaces $\Hom_\calx(x,y)$ and $\Hom_\calx (y,G(x))^\wee$; left Serre functors are defined 
analogously. In our setting, a Serre functor exists if and only if $\calx$ is semisimple.
On the other hand, if $\calx \eq \calm$ is a module category, the {\em inner Hom functor}
$\IHom(-,-)$ allows one to define the notion of \emph{relative Serre functors} $\Sr_\calm$ and
$\Sl_\calm$ (see also \cite[Sect.\,4.4]{schaum5}) as functors that instead come with families
  \be
  \IHom(m,n)^\vee \xrightarrow{~\cong~}\, \IHom(n,\Sr_\calm(m))
  \qquad \text{and} \qquad
  {}^{\vee}\IHom(m,n) \,\xrightarrow{~\cong~}\, \IHom(\Sl_\calm(n),m)
  \ee
of isomorphisms, natural in $m,n\iN\calm$, for a right and a left relative Serre functor, 
respectively. A necessary and sufficient condition for the existence of right and left 
relative Serre functors for a module category $\calm$ is that $\calm$ is an exact module 
category (Proposition \ref{prop:exirelSerre}).

Finally we show in Theorem \ref{thm:N=D.S} that for an exact module category $\calm$
over a finite tensor category $\cala$ with distinguished invertible object $D_\cala$
there is an equivalence
  \be
  \pifu_\calm \,\cong\, D_\cala\,.\,\Sl_\calm
  \ee
between the Nakayama functor and the left relative Serre functor composed with the
action by the distinguished element.
This result suggests in particular that an exact module category $\calm$ should be called 
unimodular iff there exists a module natural isomorphism between the Nakayama functor and the
right relative Serre functor of $\calm$. Then in particular the finite tensor category $\cala$ 
is unimodular iff the regular $\cala$-module category $\acala$ is unimodular. 

 \medskip

To conclude this survey of results, let us stress a major virtue
of our categorical formulation of the Eilenberg-Watts theorem: it allows us, for finite
linear categories, to switch back and forth between Deligne products of categories and 
categories of right exact as well as of left exact functors 
and thereby also between those two types of functor categories. 
As a consequence we can understand features of Deligne products in terms of functor
categories and, conversely, aspects of functors in terms of a categorified matrix calculus;
for instance, certain ends and coends can be naturally interpreted as providing a
categorified matrix multiplication (see Corollary \ref{cor:int=G'G-etc}).
We take the fact that the structures which arise when setting up an Eilenberg-Watts calculus 
for a (finite linear) category $\calc$ fit well with the additional structure of a module
category on $\calc$ as a further indication that the structures investigated in this paper
are natural and can be of much avail.


\section{Preliminary results}

\subsection{Notation and background}\label{ssec:background}

Throughout the paper we make the following assumptions. We work over a fixed field \ko;
all categories are \ko-linear and all functors \ko-linear or bilinear. All \ko-algebras
and all modules and bimodules over them are \findim. For such (unital associative) 
algebras $A$ and $B$ we denote by $A\Mod$, $\moD A$ and $B\BimoD A$ the categories 
of \findim\ left and right $A$ modules and of \findim\ $B$-$A$-bimodules, respectively, and
we write $\Hom_A(-,-)$ for morphisms of left $A$ modules and $\Hom_{B|A}(-,-)$ for
morphisms of $B$-$A$-bimodules. Two distinguished $A$-bimodules are the regular bimodule
${}_AA_A$ with the actions given by the product of $A$ and the co-regular bimodule
${}_A^{}A_A^\wee$ for which the actions are obtained by dualizing those for ${}_AA_A$.

Furthermore, finiteness properties of the categories involved are essential for our work:
we require, unless specified otherwise, that categories are in addition \emph{finite}. As 
in \cite{etos} we assume that \ko\ is algebraically closed; a \ko-linear category 
is finite iff \cite[Sect.\,2.1]{etos} it is equivalent as a linear category 
to the category of \findim\ (left or right) modules over some \findim\ \ko-algebra $A$. 
Our main interest lies in abstract finite linear categories and Morita invariant statements
rather than in statements involving a concrete choice of algebra $A$. But we do have to make
use of pertinent information about \findim\ algebras.
 
We start by recalling variants (see e.g.\ \cite[Thms.\,2.4,\,2.6,\,2.7]{ivanS} and
\cite[Lemma\,2.4]{shimi7}) of the Eilenberg-Watts theorem.
Denote the monoidal category of \findim\ vector spaces over \ko\ by $\vect$, and
for $v \iN \vect$ the dual vector space $\Hom_\vect (v,\ko)$ by $v^\wee$.
Then we have the following standard results:

\begin{Lemma}\label{lem:shimi7:2.6}
Let $A$ and $B$ be \findim\ \ko-algebras and $F, G \colon A\Mod \To B\Mod$ \ko-linear 
functors. Then the following statements are equivalent:
\\[2pt]
(L1)\, $F$ is left exact.
\\[1pt]
(L2)\, $F$ admits a left adjoint.
\\[1pt]
(L3)\, $F \,{\cong}\, \Hom_A(M,-)$ with $M \eq {(F({}_A^{} A _A^\wee))}^\wee_{}$ 
(thus in particular $F$ is representable).
\\[3pt]
Likewise, the following are equivalent:
\\[2pt]
(R1)\, $G$ is right exact.
\\[1pt]
(R2)\, $G$ admits a right adjoint.
\\[1pt]
(R3)\, $G \;{\cong}\; G({}_AA_A)\,{\otimes_A}\,{-}$\,.
\\[1pt]
(R4)\, $G \;{\cong}\; \Hom_A(-,G({}_{A}A_{A})^\wee)^\wee$.
\end{Lemma}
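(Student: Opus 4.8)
The plan is to establish the right exact block $(R1)$–$(R4)$ first, by running the classical Eilenberg-Watts argument in the \findim\ setting, and then to deduce the left exact block $(L1)$–$(L3)$ from it by means of the $\ko$-linear duality $(-)^\wee$, which interchanges left and right exactness. Within each block, every implication except the passage from exactness to a tensor/Hom presentation is purely formal: an additive functor that admits a right (resp.\ left) adjoint preserves colimits (resp.\ limits) and is therefore right (resp.\ left) exact, which gives $(R2)\Rightarrow(R1)$ and $(L2)\Rightarrow(L1)$; and the tensor-Hom adjunction together with the \findim\ duality relating $N\otA X$ to $\Hom_A(X,N^\wee)^\wee$ accounts for the remaining equivalences once a single presentation is in hand. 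The only substantive step is the Eilenberg-Watts isomorphism itself.

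For $(R1)\Rightarrow(R3)$ I would set $N:=G({}_AA_A)$ and endow it with the structure of a $B$-$A$-bimodule: the left $B$-action is the one on the target $G(A)\iN B\Mod$, while the right $A$-action is obtained by applying $G$ to the right multiplications $r_a\colon {}_AA\To{}_AA$, which are endomorphisms of the regular \emph{left} module. One then defines, for each $M\iN A\Mod$, a map $\theta_M\colon N\otA M\To G(M)$ by $n\oti m\longmapsto G(\lambda_m)(n)$, where $\lambda_m\colon{}_AA\To M$, $a\mapsto a.m$, is a morphism of left $A$-modules. A short check using $\lambda_{am}=\lambda_m\cir r_a$ and $\lambda_{f(m)}=f\cir\lambda_m$ shows that $\theta_M$ is $A$-balanced, is a morphism of left $B$-modules, and is natural in $M$. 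Since $\theta_{{}_AA}$ is the canonical isomorphism $N\otA A\,{\cong}\,N$, additivity makes $\theta$ invertible on all \findim\ free modules; choosing a finite presentation $A^{\oplus k}\To A^{\oplus l}\To M\To0$ and invoking right exactness of both $N\otA-$ and $G$ then forces $\theta_M$ to be an isomorphism for every $M$. This yields $(R3)$, whence $(R2)$ follows with right adjoint $\Hom_B(N,-)$, and $(R4)$ follows from the natural \findim\ isomorphism $N\otA X\,{\cong}\,\Hom_A(X,N^\wee)^\wee$ applied to $N\eq G(A)$.

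For the left exact block I would exploit the contravariant exact equivalence $D_A:=(-)^\wee\colon A\Mod\To\moD A\eq A^\opp\Mod$ (and its analogue $D_B$), which is its own quasi-inverse because the double dual is naturally isomorphic to the identity in finite dimensions. Given a left exact $F$, the composite $\hat F:=D_B\cir F\cir D_A\colon A^\opp\Mod\To B^\opp\Mod$ is \emph{right} exact, since $D_A$ and $D_B$ turn cokernels into kernels and back; applying $(R1)\Rightarrow(R3)$ to $\hat F$ gives $\hat F\cong\hat F(A^\opp)\otimes_{A^\opp}-$. Tracing the bimodule through the duality, $D_A({}_{A^\opp}A^\opp)$ is the coregular module ${}_A^{}A_A^\wee$, so that $\hat F(A^\opp)\eq F({}_A^{}A_A^\wee)^\wee$, which is exactly the module $M$ of the statement; un-dualizing the tensor presentation via $(P\otimes_{A^\opp}X^\wee)^\wee\cong\Hom_A(P^\wee,X)$ then produces $F\cong\Hom_A(M,-)$, i.e.\ $(L3)$. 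Finally $(L3)\Rightarrow(L2)$ because $\Hom_A(M,-)$ has left adjoint $M\otB-$, and $(L3)\Rightarrow(L1)$ because Hom functors are left exact, closing the cycle together with $(L2)\Rightarrow(L1)$.

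I expect the only real obstacle to be the verification in $(R1)\Rightarrow(R3)$ that $\theta$ is an isomorphism on arbitrary $M$: this is the one place where exactness and finite-dimensionality are genuinely used, through the existence of finite presentations and the identification of the right $A$-action on $G(A)$. The rest is adjunction and duality bookkeeping, where the main care is to keep the left and right module actions straight; in particular one must check throughout that the \findim\ duality isomorphisms are natural and compatible with the two-sided actions, so that the resulting functors indeed land in $B\Mod$ rather than merely in $\vect$.
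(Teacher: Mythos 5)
The paper does not actually prove this lemma: it is recalled as a standard result, with pointers to \cite{ivanS} and \cite{shimi7}, supplemented only by the remarks that the relevant bimodule structure on $G({}_AA_A)$ is the one made explicit before Proposition \ref{prop:end(GotikWee} and that (R4) follows from (R3) via \eqref{eq:R4expl}. Your proposal therefore supplies an argument the paper omits, and it is the expected one: the classical Eilenberg--Watts comparison map for the right exact block, followed by conjugation with the contravariant duality $(-)^\wee$ to obtain the left exact block. The individual steps are sound. The right $A$-action $\alpha\mapsto G(r_\alpha)$ on $G({}_AA)$ is exactly the structure the paper uses later (note $r_\beta\cir r_\alpha=r_{\alpha\beta}$, so this is indeed a right action); the balancedness and naturality checks for $\theta$ via $\lambda_{am}=\lambda_m\cir r_a$ and $\lambda_{f(m)}=f\cir\lambda_m$ are correct; the passage from free modules to arbitrary finite-dimensional ones via a finite presentation, right exactness and the five lemma is standard; and $D_A(A_A)$ is indeed the coregular module ${}_A^{}A_A^\wee$, so $\hat F(A^{\opp})$ is precisely the $M$ of (L3). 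The derivations of (R2) from (R3) and of (R4) from (R3) via the \findim{} tensor--Hom duality match the paper's own remark following the lemma.

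One bookkeeping slip should be repaired. The un-dualization identity you quote, $(P\otimes_{A^{\opp}}X^\wee)^\wee\cong\Hom_A(P^\wee,X)$, carries a spurious dual on $P$. Here $P=\hat F(A^{\opp})=(F({}_A^{}A_A^\wee))^\wee=M$ is a right $A^{\opp}$-module, i.e.\ a left $A$-module, and the correct chain is $(M\otimes_{A^{\opp}}X^\wee)^\wee\cong(X^\wee\otimes_A M)^\wee\cong\Hom_A(M,X^{\wee\wee})\cong\Hom_A(M,X)$. As written, your formula would output $F\cong\Hom_A(M^\wee,-)$, which contradicts (L3) (test with $F=\id$, where $M={}_AA$ and $\Hom_A(A,-)\cong\id$ but $\Hom_A(A^\wee,-)\not\cong\id$ for non-self-injective $A$). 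With this dual removed, the conclusion $F\cong\Hom_A(M,-)$ follows exactly as you claim and the cycle of implications closes.
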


Note that a functor that is left or right adjoint to a linear functor is linear as well.
Also, the vector spaces ${(F({}_A^{} A _A^\wee))}^\wee_{}$ in (L3) (and analogously
$G({}_AA_A)$ in (R3) and ${(G({}_A^{} A _A^\wee))}^\wee_{}$ in (R4)) is
endowed with the appropriate natural structure of a $B$-$A$-bimodule, as e.g.\
described explicitly in the proof of Proposition \ref{prop:end(GotikWee} below.
Further, the isomorphism in (R4) is obtained from the one in (R3) with the help of the
description 
  \be
  \Hom_A({}_AX,(M_A)^\wee)^\wee \cong \Hom_\ko(M_A \otA {}_AX,\ko)^{*}
  = (M_A \otA {}_AX)^{\wee\wee} \cong M_A \otA {}_AX 
  \label{eq:R4expl}
  \ee
of the tensor product $M_A \otA {}_{A}X$ of \findim\ modules over an algebra.
Note that $(-)^\wee\colon A\Mod \To \moD A$ is an exact functor because it is an 
equivalence, and while $\Hom_A\colon A\Mod^\opp \Times A\Mod\To \vect$ is left exact, 
$\Hom_A(-,-)^\wee\colon A\Mod \Times A\Mod^\opp \To \vect$ is right exact.

We write $F^\la$ and $F^\ra$ for the left and right adjoint of a functor $F$, respectively 
(provided they exist). For abelian categories $\calc$ and $\cald$ we denote by
$\Funle(\calc,\cald)$ and $\Funre(\calc,\cald)$ the categories of left exact and right
exact linear functors from $\calc$ to $\cald$, respectively.

The calculus of Eilenberg-Watts also leads directly to the following result, which involves
the Nakayama functor that we will discuss in more detail in Section \ref{ssec:Nakayama}. 

\begin{Lemma}\label{lem:lexrexbimod}
(i)\, For $A$ and $B$ \findim\ algebras there is an equivalence between the categories 
$\Funle(A\Mod,B\Mod)$ and $(A\BimoD B)^\opp$ which maps a functor $F$ to 
     ${(F({}_A^{} A _A^\wee))}^\wee_{}$
as well as an equivalence $\Funre(A\Mod, B\Mod) \,{\simeq}\, (A\BimoD B)^\opp$, mapping 
a functor $G$ to the $B$-$A$-bimodule $G({}_{A}A_{A})^\wee$. 
\\[2pt]
(ii)\, For any \findim\ algebra $A$ there is an equivalence 
$\Funle(A\Mod,A\Mod) \,{\xrightarrow{~\simeq~}} 
     $\linebreak[0]$
\Funre(A\Mod,A\Mod)$. This equivalence maps
the identity functor $\id_{A\Mod}$, regarded as left exact functor, to the 
right exact functor $({}_{A}A_{A})^\wee \otimes_{A}-$\,.
\end{Lemma}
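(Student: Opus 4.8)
The plan is to derive Lemma~\ref{lem:lexrexbimod} directly from the preceding Eilenberg-Watts Lemma~\ref{lem:shimi7:2.6}, treating part~(i) as a bookkeeping exercise in the equivalences established there and part~(ii) as a specialization together with the identification of the image of the identity functor.

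For part~(i), I would first handle the left exact case. By Lemma~\ref{lem:shimi7:2.6}, the assignment $F \mapsto M \eq (F({}_AA_A^\wee))^\wee$ recovers the bimodule representing $F$ via $F \cong \Hom_A(M,-)$, so on objects this map sends $\Funle(A\Mod,B\Mod)$ to $B\BimoD A$. The task is to verify that this assignment is a functor and an equivalence. Since every left exact $F$ is representable in this way and, conversely, every $B$-$A$-bimodule $M$ yields the left exact functor $\Hom_A(M,-)$, essential surjectivity and the reconstruction of $F$ from $M$ are immediate from the lemma. The one point requiring care is the \emph{variance}: a natural transformation $F \To F'$ corresponds under $\Hom_A(M,-) \To \Hom_A(M',-)$, by the Yoneda lemma, to a bimodule morphism $M' \To M$ pointing the opposite way, which is precisely why the target is the opposite category $(A\BimoD B)^\opp$. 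I would also note that $M \eq (F({}_AA_A^\wee))^\wee$ is naturally a left $B$-module (since $F$ lands in $B\Mod$ and dualizing swaps left and right actions) with a compatible $A$-action, so it is indeed a $B$-$A$-bimodule as asserted. The right exact case is entirely parallel: using (R4) one writes $G \cong \Hom_A(-,G({}_AA_A)^\wee)^\wee$, and the representing datum is the bimodule $G({}_AA_A)^\wee$; again Yoneda reverses variance, giving the equivalence with $(A\BimoD B)^\opp$.

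For part~(ii), I would simply compose the two equivalences of~(i) in the case $B \eq A$: the first identifies $\Funle(A\Mod,A\Mod) \simeq (A\BimoD A)^\opp$ and the second identifies $\Funre(A\Mod,A\Mod) \simeq (A\BimoD A)^\opp$, so composing the first with the quasi-inverse of the second yields the desired equivalence $\Funle(A\Mod,A\Mod) \xrightarrow{\simeq} \Funre(A\Mod,A\Mod)$. It remains to track the identity functor through this chain. Regarded as a left exact functor, $\id_{A\Mod}$ corresponds under the first equivalence to the bimodule $(\id({}_AA_A^\wee))^\wee \eq (A^\wee)^\wee \cong {}_AA_A$, i.e.\ the regular bimodule. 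Transporting the regular bimodule back through the right exact equivalence: a right exact functor $G$ has representing bimodule $G({}_AA_A)^\wee$, so I must find $G$ with $G(A)^\wee \cong A$, equivalently $G(A) \cong A^\wee$; by (R3) such a functor is $G \cong G({}_AA_A) \otA {-} \cong A^\wee \otA {-}$. Hence the identity functor is sent to $({}_AA_A)^\wee \otimes_A {-}$, as claimed.

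The main obstacle, and the only genuinely nontrivial point, is getting the variance and the left/right module structures exactly right in part~(i) — making sure that the bimodule representing a left exact functor carries the correct $B$-$A$-structure (rather than an $A$-$B$-structure) and that the functoriality lands in the \emph{opposite} category. This is essentially a careful application of the Yoneda lemma combined with the explicit bimodule structures described in (and after) Lemma~\ref{lem:shimi7:2.6} and in the proof of Proposition~\ref{prop:end(GotikWee}; once those are pinned down, both parts follow formally, and in part~(ii) the computation $(\id(A^\wee))^\wee \cong A$ together with the identification $A^\wee \otA {-}$ is routine.
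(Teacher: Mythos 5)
Your proposal is correct and takes essentially the same route as the paper, whose entire proof consists of the remark that (i) follows from parts (L3) and (R3) (equivalently (R4)) of Lemma \ref{lem:shimi7:2.6} and that (ii) is obtained by composing the two equivalences of (i) at $B \eq A$; your explicit tracking of the identity functor via $(\id({}_AA_A^\wee))^\wee \cong {}_AA_A$ and the identification $G \cong ({}_AA_A)^\wee \otA {-}$ is exactly the computation left implicit there. The only blemish is a sides-of-action slip in your parenthetical: $(F({}_AA_A^\wee))^\wee$ carries a left $A$- and a right $B$-action (it is the dual of $F({}_AA_A^\wee) \iN B\BimoD A$), i.e.\ it is an object of $A\BimoD B$ as the stated target $(A\BimoD B)^\opp$ requires, not a left $B$-module --- though the paper's own phrase ``the $B$-$A$-bimodule $G({}_AA_A)^\wee$'' contains the same inconsistency.
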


\begin{proof}
(i) is an easy consequence of the parts (L3) and (R3), respectively, of Lemma
\ref{lem:shimi7:2.6}.
(ii) follows by composing the two equivalences from (i) in the case $B \eq A$.
\end{proof}
 
By the equivalence of finite linear categories to those of modules over \findim\ \ko-algebras,
Lemma \ref{lem:shimi7:2.6} immediately gives (compare 
\cite[Prop.\,1.7\,\&\,Cor.\,1.10]{doSs})

\begin{cor}\label{cor:leftexact-etc}
Let $F,G\colon \calc \To \cald$ be linear functors between finite linear categories. 
The following statements are equivalent:
\\[2pt]
(L1)\, $F \iN \Funle(\calc,\cald)$.
\\[1pt]
(L2)\, $F$ admits a left adjoint.
\\[4pt]
Likewise, the following are equivalent:
\\[2pt]
(R1)\, $G \iN \Funre(\calc,\cald)$.
\\[1pt]
(R2)\, $G$ admits a right adjoint.
\\[4pt]
Moreover for $\cald \eq \vect$ in addition the following statements are
equivalent to (L1) and (L2) and to (R1) and (R2), respectively:
\\[2pt]
(L3)\, $F$ is representable, i.e.\ $F \,{\cong}\, \Hom_\calc (c,-)$ for some $c\iN\calc$.
\\[2pt]
(R3)\, $G$ is `dually representable', i.e.\ $G \,{\cong}\, \Hom_\calc (-,d)^*_{}$ 
for some $d\iN\calc$.
\end{cor}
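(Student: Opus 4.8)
The plan is to reduce everything to Lemma~\ref{lem:shimi7:2.6} by transporting the situation along equivalences of categories. By the standing finiteness assumptions there are \findim\ \ko-algebras $A$ and $B$ together with \ko-linear equivalences $E_\calc\colon \calc \xrightarrow{\,\simeq\,} A\Mod$ and $E_\cald\colon \cald \xrightarrow{\,\simeq\,} B\Mod$; I fix quasi-inverses $E_\calc^{-1}$ and $E_\cald^{-1}$. Given $F$ and $G$ I form the conjugated functors $\tilde F := E_\cald \circ F \circ E_\calc^{-1}$ and $\tilde G := E_\cald \circ G \circ E_\calc^{-1}$ from $A\Mod$ to $B\Mod$. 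The whole corollary then follows once one checks that each property in the statement is invariant under this conjugation, for then it can be read off from the corresponding equivalence in Lemma~\ref{lem:shimi7:2.6} applied to $\tilde F$ and $\tilde G$.

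Concretely I would record three standard transport principles. (1)~An equivalence of abelian categories is exact: as part of an adjoint equivalence it has both adjoints (given by its quasi-inverse) and hence preserves all limits and colimits; since exactness is stable under composition, $F$ is left (resp.\ right) exact iff $\tilde F$ (resp.\ $\tilde G$) is. This converts (L1) and (R1) for $F,G$ into (L1) and (R1) for $\tilde F,\tilde G$. (2)~Adjoints transport along equivalences: if $\tilde F$ has a left adjoint $L$, then $E_\calc^{-1}\circ L\circ E_\cald$ is a left adjoint of $F$, and conversely, as one sees by composing the adjunction isomorphism with the Hom-space isomorphisms induced by the fully faithful $E_\calc$ and $E_\cald$; the same holds with ``left'' replaced by ``right''. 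This converts (L2) and (R2) for $F,G$ into (L2) and (R2) for $\tilde F,\tilde G$. Combining (1) and (2) with the equivalences (L1)$\,\Leftrightarrow\,$(L2) and (R1)$\,\Leftrightarrow\,$(R2) of Lemma~\ref{lem:shimi7:2.6} gives the first two claims.

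For the supplementary statements in the case $\cald \eq \vect$ I would take $B \eq \ko$, so that $B\Mod \eq \vect$ and one may choose $E_\cald \eq \id_\vect$. (3)~Representability is likewise invariant under $E_\calc$: since $E_\calc$ is fully faithful, $F \,{\cong}\, \Hom_\calc(c,-)$ transports to $\tilde F \,{\cong}\, \Hom_A(E_\calc c,-)$ and, conversely, $\tilde F \,{\cong}\, \Hom_A(M,-)$ transports to $F \,{\cong}\, \Hom_\calc(E_\calc^{-1}M,-)$; the same works for the dually representable form $\Hom_\calc(-,d)^\wee$. Specializing Lemma~\ref{lem:shimi7:2.6} to $B \eq \ko$, its statement (L3) becomes precisely representability of $\tilde F$ and its statement (R4) becomes dual representability of $\tilde G$; transporting back along $E_\calc$ then yields (L3) and (R3) of the corollary.

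I do not expect a genuine obstacle: each step merely invokes that exactness, existence of adjoints and representability are all invariant under equivalence of categories, so that the corollary is indeed an immediate consequence of Lemma~\ref{lem:shimi7:2.6}. The one point deserving a moment's care is the placement of the quasi-inverses when transporting adjoints -- i.e.\ verifying that it is $E_\calc^{-1}\circ L\circ E_\cald$, rather than some other arrangement, that is adjoint to $F$ -- but this is dictated uniquely by the adjunction isomorphism and causes no difficulty.
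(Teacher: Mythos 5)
Your proposal is correct and follows essentially the same route as the paper, which derives the corollary from Lemma~\ref{lem:shimi7:2.6} precisely by transporting along the equivalences $\calc\simeq A\Mod$ and $\cald\simeq B\Mod$ (the paper leaves the transport of exactness, adjoints and (dual) representability implicit, whereas you spell it out). Your identification of the corollary's (R3) with the lemma's (R4) in the case $B\eq\ko$ is also the intended reading, so nothing further is needed.
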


In the sequel we will use the abbreviated notation
  \be
  \Hom_\calc(c,d) =:\, \Ho\calc cd 
  \label{eq:Hom-Ho}
  \ee
for morphism spaces.
Apart from its brevity the motivation for this notation is that we like to think informally
of the Hom functor as a categorified inner product. Moreover, if $\calc$ is $\vect$ or a category
of (bi)modules we further abbreviate $\Ho\vect --\,{ =:}\,\, \Ho\ko-- $ and
  \be
  \Ho{A\Mod} -- =: \Ho A-- \,, \qquad {\rm and} \qquad
  \Ho{B\BimoD A} -- =: \Ho {B|A} -- \,, 
  \ee
respectively.

Whenever it does not make expressions too clumsy, we will write $\ol c$ and $\ol f$
for the objects and morphisms in the opposite category $\calcopp$ that correspond to an 
object $c$ and a morphism $f$ in $\calc$. Thus with the notation \eqref{eq:Hom-Ho}
we have $ \Hop\calcopp {\ol d}{\ol c} \eq \Ho\calc cd $.
If $\calc$ is monoidal then so is $\calcopp$, and we take its tensor product to be
given by $\ol c \,{\otimes_\calcopp}\, \ol{c'} \eq \ol{c \,{\otimes_\calc}\, c'}$.
Then the structure of a right duality on $\calc$ induces a left duality on $\calcopp$
and vice versa.
For a functor $F\colon \calc\To\cald$ the \emph{opposite functor} is the functor 
$F^\opp_{}\colon \calcopp\To\caldopp$ that 
consists of the same maps on the class of objects and on the morphism sets as $F$, i.e.\
  \be
  F^\opp_{\phantom|}(\ol c) = \ol{F(c)} \qquad{\rm and}\qquad 
  F^\opp_{\phantom|}\big( \ol{m'}{\xrightarrow{\sss \ol f}}\ol m \big)
  = \ol{F(m{\xrightarrow{\sss f}}m')} \,.
  \ee
If $F$ is right exact, then $F^\opp_{}$ is left exact, and vice versa.

We are accustomed with the distinguished role that the Hom functor, i.e.\ left exact (bi)functor 
$\Ho\calc -- \colon \calcopp\Times\calc \To \vect$, plays in many contexts. As it turns out,
for us the right exact (bi)functor obtained from Hom by taking duals in $\vect$, which is 
familiar from the definition of Nakayama and Serre functors (and which was already used in
formula \eqref{eq:R4expl} and part (R3) of Corollary \ref{cor:leftexact-etc}), is of similar 
importance. We therefore introduce it formally:

\begin{Definition}\label{Def:dualHom}
For $\calc$ a finite linear category, the \emph{dual Hom} functor is the functor
  \be
  \bearll
  \Hox\calc -- \colon &\calcopp\Times\calc \xrightarrow{~~~} \vect
  \Nxl1 & \hspace*{1.1em}
  (\ol c,c') \longmapsto \big( \Ho\calc {c'}c \big)^*_{} .
  \eear
  \ee
\end{Definition}

Just like in the case of Hom, we will use the term dual Hom functor also for the 
right exact functor $\Hox\calc -c \colon \calc\To\vect$
with fixed $c\iN\calc$ and for the contravariant functor $\Hox\calc c-$.  
Finally recall that a linear category is a module category over $\vect$; by definition,
for $c \iN \calc$ and $v \iN \vect$ the object $c \oti v \,{\cong}\, v \oti c \iN \calc$ 
is the one that corepresents the functor $\Ho\calc -c \otik v$, so that 
  \be
  \Ho\calc {c \oti v} {c' \oti v'} \,\cong\, v^\wee \otik \Ho\calc c{c'} \otik v' 
  \,\cong\, \HO\vect v {\Ho\calc c{c'} \otik v'}
  \label{Hom(av,a'v')}
  \ee
for $c,c' \iN \calc$ and $v,v' \iN \vect$.
Note that in $\calcopp$ we have $\ol{v \oti c} \eq v^\wee \oti \ol c$.


\subsection{Ends and coends}

Recall \cite[Ch.\,IX.4]{MAcl} that a dinatural transformation $F\,{\Rightarrow}\,x$ from a 
functor $F\colon\, \calcopp\Times\calc\To\cald$ to an object $x\,{\in}\,\cald$ is a family 
$\varphi \,{=}\, \{ \varphi_c^{}\colon F(\ol c,c)\To x \}_{c\in\calc}^{}$ of morphisms 
satisfying $\varphi_{c'}^{} \cir F(\ol{c'},f) \eq \varphi_c^{} \cir F(\ol f,c)$
for all $f\,{\in}\,\Ho\calc c{c'}$.
A coend $(z,\iota)$ for a functor $F\colon \calcopp\Times\calc\To\cald$
is an object $z\,{\in}\,\cald$ together with a dinatural transformation $\iota$ 
from $F$ to $z$ having the universal property that for any dinatural transformation
$\varphi\colon F\,{\Rightarrow}\,x$ to some $x\,{\in}\,\cald$ there is a unique morphism
$\kappa \eq \kappa(\varphi) \iN \Ho\cald zx$ such that $\varphi_c \,{=}\, \kappa \,{\circ}\, 
\iota_c$ for all $c \iN \calc$. The notion of an end for a functor
$F\colon \calcopp\Times\calc\To\cald$ is defined dually.
We often suppress the universal dinatural transformation and denote the coend and end of a
functor $F\colon \calcopp\Times\calc\To\cald$, as well as the underlying objects, by
$\int^{c\in\calc\!}\! F(\ol c,c)$ and by $\int_{c\in\calc} F(\ol c,c)$, respectively.

Properties of ends and coends will play a crucial role in the formulation of our results. In 
the present and the next subsection we collect the most basic of those properties.
We start by recalling the following fundamental preservation results in which,
unlike in the rest of the paper, $\cala$ is not assumed to be linear or finite.

\begin{Lemma}\label{lem:basic-coend-pres}~\\[2pt]
{\rm(i)}\,
Let $H\colon \calaopp \Times \cala \To \calc$ be a functor whose coend exists. Then there
is a natural isomorphism
  \be
  \HO\calc {\int^{a\in\cala}\! H(\ol a,a)} - \,\cong \int_{a\in\cala} \Ho\calc {H(\ol a,a)}- 
  \label{eq:coend-hom}
  \ee
of functors from $\calc$ to $\vect$.
\\[3pt]
{\rm (ii)}\,
Let $H\colon \calaopp \Times \cala \To \calc$ be a functor whose end exists. Then there
is a natural isomorphism
  \be
  \Ho\calc - {\int_{a\in\cala} H(\ol a,a)} \,\cong \int_{a\in\cala} \Ho\calc -{H(\ol a,a)} \,.
  \label{eq:coend-hom2r}
  \ee
\end{Lemma}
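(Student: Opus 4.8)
The plan is to prove both statements by a direct appeal to the defining universal properties of coend and end, together with the fact that $\Ho\calc{-}{-}$ is a $\vect$-valued bifunctor. The underlying principle is that $\Ho\calc{-}{-}$ sends colimits in its covariant argument to limits and is contravariant-continuous in its first argument, and ends and coends are particular (weighted) limits and colimits. Concretely, for part (i) I would fix an object $d\iN\calc$ and show that the two functors of $d$ on either side of \eqref{eq:coend-hom} are naturally isomorphic by exhibiting, for each $d$, a bijection of the corresponding vector spaces that is natural in $d$.

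For part (i), first I would unpack the right-hand side: an element of $\int_{a\in\cala}\Ho\calc{H(\ol a,a)}d$ is, by the definition of an end in $\vect$, precisely a family $\{\psi_a\colon H(\ol a,a)\To d\}_{a\in\cala}$ satisfying the dinaturality (wedge) condition $\psi_{a'}\cir H(\ol{a'},f)\eq\psi_a\cir H(\ol f,a)$ for every $f\iN\Ho\cala a{a'}$. That is, it is exactly a dinatural transformation $H\,{\Rightarrow}\,d$. The universal property of the coend $(z,\iota)$ with $z\eq\int^{a\in\cala}H(\ol a,a)$ says that such dinatural transformations are in bijection, via $\psi_a\eq\kappa\cir\iota_a$, with morphisms $\kappa\iN\Ho\calc z d$. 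Hence there is a bijection $\Ho\calc z d\,{\cong}\,\int_{a\in\cala}\Ho\calc{H(\ol a,a)}d$, and the required naturality in $d$ is immediate since post-composition with a morphism $d\To d'$ is compatible with the assignment $\kappa\,{\mapsto}\,\{\kappa\cir\iota_a\}$ on both sides. Part (ii) is the formal dual: an element of $\int_{a\in\cala}\Ho\calc{d}{H(\ol a,a)}$ is a dinatural family $\{\varphi_a\colon d\To H(\ol a,a)\}$, which by the universal property of the end $\int_{a\in\cala}H(\ol a,a)$ corresponds bijectively and naturally in $d$ to a single morphism $d\To\int_{a\in\cala}H(\ol a,a)$, giving \eqref{eq:coend-hom2r}.

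The only point that requires a little care, and which I would flag as the main (mild) obstacle, is checking that the dinaturality condition satisfied by a family $\{\psi_a\}$ as an \emph{end} in $\vect$ matches exactly the dinaturality condition defining a \emph{dinatural transformation} $H\,{\Rightarrow}\,d$. Here one must track the variances correctly: the end $\int_{a\in\cala}\Ho\calc{H(\ol a,a)}d$ is the end of the bifunctor $(\ol a,a)\,{\mapsto}\,\Ho\calc{H(\ol a,a)}d$, whose contravariant slot arises from the covariant slot of $H$ composed with the contravariant first argument of $\Ho\calc{-}{-}$, and conversely. Once one writes out the wedge condition for this composite bifunctor and uses functoriality of $H$ and of $\Ho\calc{-}{-}$, it collapses precisely to the dinaturality equation $\psi_{a'}\cir H(\ol{a'},f)\eq\psi_a\cir H(\ol f,a)$, so the two notions coincide. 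With that identification in hand, both isomorphisms are nothing but restatements of the universal properties, and no finiteness or linearity hypotheses on $\cala$ are used, consistent with the remark preceding the lemma.
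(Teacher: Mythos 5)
Your argument is correct: identifying an element of the end $\int_{a\in\cala}\Ho\calc{H(\ol a,a)}d$ in $\vect$ with a dinatural transformation $H\,{\Rightarrow}\,d$ and then invoking the universal property of the coend (and dually for part (ii)) is exactly the standard derivation, and your care about matching the wedge condition of the composite bifunctor with the dinaturality equation is the one genuinely non-trivial point. The paper itself states this lemma without proof, as a recalled fundamental fact, so there is nothing to compare against beyond noting that your proof is the canonical one the authors implicitly have in mind.
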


Another result that will often be used and holds in generality is that for any pair of 
functors $F,\,G\colon \cala \To \calb$ there is a bijection \cite[Eq.\,IX.5(2)]{MAcl}
  \be
  \Nat(F,G) \,\cong \int_{a\in\cala} \Ho\calb {F(a)}{G(a)}
  \label{eq:end-nat-trans}
  \ee
between the set of natural transformations and the end on the right hand side. 
The dinatural transformation for this end just consists of writing a natural transformation 
as a tuple in its components.

\medskip

Restricting our attention from now on again to finite linear categories, we know from
Corollary \ref{cor:leftexact-etc} that left exact functors $F\colon \calb \To \calc$ are 
representable and thus continuous, while right exact functors $G\colon \calb \To \calc$ are
cocontinuous.
Now the end can be written as a (small) limit and thus commutes with continuous functors, 
while the coend commutes with cocontinuous functors. Thus in particular we have

\begin{Lemma}
Let $\cala$, $\calb$ and $\calc$ be finite linear categories and $H\colon \calaopp \Times \cala  
\To \calb$ be a functor whose (co)end exists. There are canonical isomorphisms
  \be
  F \big( \int_{a\in\cala}\! H(\ol a,a) \big) \,\cong \int_{a\in\cala} F(H(\ol a,a))
  \qquad{\rm for~any} ~~ F \iN \Funle(\calb,\calc) 
  \label{eq:end-Fleft}
  \ee
and
  \be
  G \big( \int^{a\in\cala}\!\! H(\ol a,a) \big) \,\cong \int^{a\in\cala}\! G(H(\ol a,a)) 
  \qquad{\rm for~any} ~~ G \iN \Funre(\calb,\calc) \,.
  \label{eq:coend-Gright}
  \ee
\end{Lemma}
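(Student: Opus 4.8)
The plan is to deduce both isomorphisms from a single categorical mechanism: an end is a small limit and a coend a small colimit, and a functor that possesses a left (respectively right) adjoint is itself a right (respectively left) adjoint and therefore preserves all limits (respectively colimits) that happen to exist in its source. The bridge to the hypotheses is Corollary \ref{cor:leftexact-etc}: since $F$ is left exact it admits a left adjoint $F^\la$ by (L1)$\Leftrightarrow$(L2), so $F$ is continuous; dually, since $G$ is right exact it admits a right adjoint $G^\ra$ by (R1)$\Leftrightarrow$(R2), so $G$ is cocontinuous.

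Carrying this out for \eqref{eq:end-Fleft}: by \cite[Ch.\,IX.5]{MAcl} the end $\int_{a} H(\ol a,a)$ is the limit of the diagram in $\calb$ obtained by restricting $H$ along the twisted-arrow category of $\cala$, and this limit exists by assumption. A right adjoint preserves every limit that exists in its domain, so applying $F$ sends this limit to a limit of $F\cir H$ over the same diagram, equipped with the family $F\cir\iota$ of structure morphisms. By the defining universal property of an end this limit is precisely $\int_{a} F(H(\ol a,a))$, and the induced comparison morphism is the asserted canonical isomorphism. The statement \eqref{eq:coend-Gright} is exactly dual: the coend is a colimit, $G$ is a left adjoint and hence cocontinuous, so $G$ carries the colimit defining $\int^{a} H(\ol a,a)$ to the colimit defining $\int^{a} G(H(\ol a,a))$.

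The one point that needs care -- the remark made in the text just before the statement -- is that the diagram indexing the end is \emph{not} finite: it is built from all objects and morphisms of $\cala\simeq A\Mod$, of which there are infinitely many, and $\calb$ need not even possess the products into which one might try to decompose the end. Consequently the literal property of being left exact, i.e.\ of preserving \emph{finite} limits, does not by itself guarantee preservation of the end; what is genuinely required is preservation of arbitrary small limits. The whole force of the argument lies in the fact that the adjoint-functor characterization of Corollary \ref{cor:leftexact-etc} upgrades finite left exactness to full continuity. This is the step I would regard as the main obstacle.

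To make the preservation claim completely explicit while avoiding any discussion of (possibly non-existent) large products, one may argue through representability using Lemma \ref{lem:basic-coend-pres}. For every $c\iN\calc$ one has, naturally in $c$,
\[
\Ho\calc{c}{F(\textstyle\int_{a} H(\ol a,a))} \cong \Ho\calb{F^\la(c)}{\textstyle\int_{a} H(\ol a,a)} \cong \int_{a}\Ho\calb{F^\la(c)}{H(\ol a,a)} \cong \int_{a}\Ho\calc{c}{F(H(\ol a,a))},
\]
where the outer isomorphisms are the adjunction $F^\la\dashv F$ and the middle one is Lemma \ref{lem:basic-coend-pres}(ii). By Lemma \ref{lem:basic-coend-pres}(ii) applied in $\calc$ this identifies $F(\int_{a} H(\ol a,a))$ as the end $\int_{a} F(H(\ol a,a))$, proving \eqref{eq:end-Fleft}. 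The coend case \eqref{eq:coend-Gright} follows in the same way from the adjunction $G\dashv G^\ra$ together with Lemma \ref{lem:basic-coend-pres}(i).
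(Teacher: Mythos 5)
Your argument is correct and is essentially the paper's own: the paper likewise derives the lemma from the fact (Corollary \ref{cor:leftexact-etc}) that left exact functors between finite linear categories admit left adjoints and are therefore continuous, hence preserve the end qua small limit, with the dual reasoning for coends. Your explicit verification via the adjunction and Lemma \ref{lem:basic-coend-pres} just spells out the standard ``right adjoints preserve ends'' mechanism that the paper leaves implicit.
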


As another simple relationship we mention that for any functor $H\colon \calaopp \Times \cala
\To \calb$ between finite linear categories there is a canonical isomorphism
  \be
  \ol{\int^{a\in\cala}\!\! H(\ol a,a)} \cong \int_{a \in \cala} H^{\opp}_{}(a,\ol a)
  \label{eq:opp-coend=end}
  \ee
with $H^\opp_{}\colon \cala \Times \calaopp \To \calbopp$ and the obvious relation between
the dinatural families of the coend and end.


\subsection{Generalized Yoneda lemmas}\label{sec:delt}

For any functor $F\colon \cala\To\vect$ we have
  \be
  \int_{b\in\cala} \HO\vect {\Ho\cala ab} {F(b)} 
  \stackrel{\eqref{eq:end-nat-trans}}\cong \Nat( \Ho\cala a-,F) \,\cong F(a) 
  \label{eq:Yon}
  \ee 
by the Yoneda lemma. This can be used to show that the Hom functor and the dual Hom functor
(as introduced in Definition \ref{Def:dualHom}) constitute units for a kind of convolution 
product of linear functors. This property
of the Hom and dual Hom functors plays a key role in various calculations below
(as well as e.g.\ in topological \cite{lyub11} and conformal \cite{fuSc22} quantum field
theory). Therefore we give an explicit proof, even though the result is known (see 
\cite[Cor.\,1.4.5\,\&\,Ex.\,1.4.6]{RIeh}):

\begin{Proposition}
Let $\cala$ and $\calc$ be finite linear categories and $F\colon \cala \To \calc$ 
a linear functor.
\\[2pt]
{\rm (i)}\, There is a natural isomorphism 
  \be
  \int^{a \in \cala}\!\! \Ho\cala a- \otimes F(a) \,\cong F 
  \label{eq:delta-property}
  \ee
of linear functors. In particular, the coend on the left hand side exists as an object of 
the category $\Fun(\cala,\calc)$ of linear functors from $\cala$ to $\calc$.
More specifically, functorially in $x \iN \cala$ the object $F(x)$ 
has the properties of a coend $ \int^{a \in \cala}\!\! \Ho\cala ax \oti F(a) $.
\\[3pt]
{\rm (ii)}\, There is a natural isomorphism 
  \be
  \int_{a \in \cala} \Hox\cala -a \otimes F(a) \,\cong F 
  \label{eq:codelta-property}
  \ee
of linear functors.
\end{Proposition}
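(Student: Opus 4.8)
The plan is to read \eqref{eq:delta-property} and \eqref{eq:codelta-property} as the $\calc$-valued density (co-Yoneda) and Yoneda formulas and to reduce both, by probing the candidate (co)end with representable functors out of, respectively into, the target $\calc$, to the $\vect$-valued Yoneda identity \eqref{eq:Yon} and the copower relation \eqref{Hom(av,a'v')}, which are the only nontrivial inputs. For {\rm(i)} I would first write down the expected universal dinatural family: for fixed $x\iN\cala$ and each $a\iN\cala$ let $\varphi_a\colon \Ho\cala ax\oti F(a)\To F(x)$ be the copower transpose of the structure map $\Ho\cala ax\To\Ho\calc{F(a)}{F(x)}$ of $F$, so that $\varphi_a$ sends $g\oti v$ to $F(g)(v)$; dinaturality of $\varphi$ in $a$ is exactly the functoriality of $F$. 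It then suffices to show that $(F(x),\varphi)$ satisfies the universal property of the coend, i.e.\ that for every $y\iN\calc$ the map
  \be
  \Ho\calc{F(x)}y \To \int_{a\in\cala}\Ho\calc{\Ho\cala ax\oti F(a)}y \,, \qquad
  \kappa\longmapsto(\kappa\cir\varphi_a)_{a\in\cala} \,,
  \ee
into the space of dinatural transformations from the integrand to $y$ is a bijection.

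To verify this I would apply \eqref{Hom(av,a'v')} to rewrite the integrand as $\Ho\calc{\Ho\cala ax\oti F(a)}y\cong\HO\vect{\Ho\cala ax}{\Ho\calc{F(a)}y}$, so that the end becomes $\int_{a}\HO\vect{\Ho\cala ax}{\Ho\calc{F(a)}y}$. Since $a\mapsto\Ho\calc{F(a)}y$ is a functor $\calaopp\To\vect$ and $\Ho\cala ax\eq\Hop\calaopp{\ol x}{\ol a}$, this end is precisely the instance of \eqref{eq:Yon} for the category $\calaopp$, with value $\Ho\calc{F(x)}y$. The one point that must be traced rather than quoted is that, under these identifications, the Yoneda bijection of \eqref{eq:Yon} coincides with the map $\kappa\mapsto(\kappa\cir\varphi_a)_a$ induced by $\varphi$; writing both out, they agree on the nose, which establishes the universal property and hence \eqref{eq:delta-property} objectwise in $x$.

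For {\rm(ii)} I would argue dually, now testing the candidate end against the covariant representable $\Ho\calc y-$ and using Lemma~\ref{lem:basic-coend-pres}{\rm(ii)} to move $\Ho\calc y-$ inside the end. The copower transpose of $F$ again furnishes the universal dinatural cone $F(x)\To\Hox\cala{-}{a}\oti F(a)$, and \eqref{Hom(av,a'v')} now produces the integrand $\HO\vect{\Ho\cala xa}{\Ho\calc y{F(a)}}$: the vector-space dual built into the dual Hom $\Hox\cala{-}{-}$ combines with $\Ho\calc y-$ into an ordinary Hom-space and leaves the integration variable $a$ in the \emph{covariant} slot. Hence $a\mapsto\Ho\calc y{F(a)}$ is a genuine covariant functor $\cala\To\vect$ and the end is the instance of \eqref{eq:Yon} for $\cala$ itself, with value $\Ho\calc y{F(x)}$; the same bookkeeping as in {\rm(i)} identifies this bijection with the one induced by the universal cone, giving \eqref{eq:codelta-property} objectwise.

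Finally, these objectwise statements upgrade to the asserted isomorphisms of functors: the universal families constructed above are manifestly natural in the free variable $x$, and (co)ends in the functor category $\Fun(\cala,\calc)$ are computed objectwise whenever the objectwise (co)ends exist, so $F$ itself is the coend, resp.\ end, of \eqref{eq:delta-property} and \eqref{eq:codelta-property}. I expect the only genuine obstacle to be the combination of variance bookkeeping with this last upgrade: one must apply \eqref{eq:Yon} over $\calaopp$ in {\rm(i)} but over $\cala$ in {\rm(ii)}, and, more importantly, check that the abstract isomorphism of Hom-functors obtained from Lemma~\ref{lem:basic-coend-pres} and \eqref{Hom(av,a'v')} is indeed the map induced by the candidate universal dinatural family and is natural in $x$. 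Once this compatibility is confirmed, no further computation is required.
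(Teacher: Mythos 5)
Your proposal is correct and follows essentially the same route as the paper: rewrite the integrand via the copower relation \eqref{Hom(av,a'v')}, identify the resulting end with the Yoneda end \eqref{eq:Yon} (contravariant over $\calaopp$ in (i), covariant over $\cala$ in (ii)), and recognize the universal dinatural family as the transpose of $f\mapsto F(f)$. The only difference is presentational — you verify the universal property of the candidate $(F(x),\varphi)$ head-on, whereas the paper computes $\Hom$ out of (resp.\ into) the (co)end and then invokes Yoneda once more — but the inputs and the bookkeeping are identical.
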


\begin{proof} ~\\[3pt]
(i)\, For any $x \iN \cala$ and $c \iN \calc$ we have
  \be
  \bearl
  \HO\calc {\int^{a\in\cala} \Ho\cala ax \oti F(a)} c
  \dsty \stackrel{\eqref{eq:coend-hom}}\cong \!
  \int_{a\in\cala} \HO\calc {\Ho\cala ax \oti F(a)} c
  \Nxl2\dsty \hspace*{4.1em}
  \stackrel{\eqref{Hom(av,a'v')}}\cong\! \int_{a\in\cala} \Hox\cala ax \otik \Ho\calc {F(a)}c
  \,\cong \int_{a\in\cala} \HO{\!\vect} {\Ho\cala ax} {\Ho\calc {F(a)}c}
  \Nxl3 \hspace*{4.1em}
  \stackrel{\eqref{eq:end-nat-trans}} \cong \Nat\big( \Ho\cala -x , \Ho\calc {F(-)}c \big)
  \stackrel{\eqref{eq:Yon}} \cong \Ho\calc {F(x)}c 
  \eear
  \ee
natural in $x \iN \cala$. Here in the last line we use the (contravariant) Yoneda lemma.
The claim now follows by invoking the Yoneda lemma once again.
The universal dinatural transformation consists of morphisms from $\Ho\cala ax \oti F(a)$ 
to $F(x)$, for $a \iN \cala$. Using the isomorphism $\Ho\calc {\Ho\cala ax \oti F(a)}{F(x)}
\,{\cong}\, \Ho\ko {\Ho\cala ax} {\Ho\calc {F(a)}{F(x)}}$, this morphism comes from the map 
$f \,{\mapsto} F(f)$ for all $f \iN \Ho\cala ax$.
\\[3pt]
(ii)\, Analogously as in the case of (i) the proof follows from
  \be
  \bearll
  \HO\calc c {\int_{a\in\cala} \Hox\cala xa \oti F(a)}
  \!\!&\dsty
  \stackrel{\eqref{eq:coend-hom2r}}\cong \!
  \int_{a\in\cala}\! \HO\calc c {\Hox\cala xa \oti F(a)}
  \Nxl2 &\dsty
  \stackrel{\eqref{Hom(av,a'v')}}\cong \!\! 
  \int_{a\in\cala}\! \HO{\!\vect} {\Ho\cala xa} {\Ho\calc c{F(a)}}
  \stackrel{\eqref{eq:Yon}} \cong \Ho\calc c{F(x)} \,.
  \eear
  \ee
This time the covariant Yoneda lemma has been used, while for completing the proof one 
implements the contravariant Yoneda lemma.
Finally, the universal dinatural transformation for the end comes 
from the isomorphism $\Ho\calc {F(x)} {\Hox\cala xa \oti F(a)} \,{\cong}\,
\Ho\ko {\Ho\cala ax} {\Ho\calc {F(a)}{F(x)}}$ for $a \iN \cala$;
as in (i), this is induced by the map  $f \,{\mapsto}\, F(f)$ for all $f \iN \Ho\cala ax$.
\end{proof}

In the case $\calc \eq \vect$ the property \eqref{eq:codelta-property} reduces to the
statement of the Yoneda lemma as given in \eqref{eq:Yon}. Actually, as the proof indicates, 
it is merely a repackaging of that statement and could therefore just be referred to as a
Yoneda lemma itself, and analogously \eqref{eq:delta-property} as a co-Yoneda lemma
\cite{RIeh}. For concreteness, interpreting loosely ends and coends as analogues of integrals,
we will instead in the sequel refer to these results as the 
\emph{convolution properties} of the Hom functor and of the dual Hom functor, respectively. 

The convolution property \eqref{eq:delta-property} of the Hom functor can also be seen 
as a consequence of the coend 
formulas for Kan extensions, applied to an extension along the identity functor, which gives
$F(c) \,{=}\, \mathrm{Lan}_{\id}F(c)  \,{=}\, \int^{a\in\cala}\! \Ho\cala ac \oti F(a)$.


\subsection{A Peter-Weyl expression for (co)regular bimodules}

For $A$ and $B$ \findim\ \ko-algebras and $G$ a \ko-linear functor from $ A\Mod$ to $B\Mod$,
the object $G(A)$ has a natural structure of a $B$-$A$-bimodule. To see this, 
note that the right multiplication $r_\alpha\colon A \To A$ by $\alpha \iN A$ on $A$
is a morphism of left $A$-mo\-du\-les. As a consequence,
$G(r_\alpha)\colon G(A) \To G(A)$ is a morphism of left $B$-mo\-du\-les, and one can take 
the ring homomorphism $\alpha \,{\mapsto}\, G(r_\alpha) \iN \End(G(A))$
as the definition of the right $A$-module structure on $G(A)$. By construction, this
commutes with the left $B$-module structure on $G(A)$.

This bimodule structure on $G(A)$ is used in the following statement in which (unlike 
in the Eilenberg-Watts results) no exactness property needs to be assumed.

\begin{Proposition}\label{prop:end(GotikWee}
Let $A$ and $B$ be \findim\ algebras and $G\colon A\Mod \To B\Mod$ a linear functor.
The end of the functor 
  \be
  \bearll
  \widetilde G : & A\Mod \Times A\Mod^\opp \xrightarrow{~~~}\, B\BimoD A  
  \Nxl1 & \hspace*{5.5em}
  m \Times \ol n \,\longmapsto\, G(m) \otik n^\wee 
  \eear
  \label{eq:widetildeG}
  \ee
is given as a $B$-$A$-bimodule by
  \be
  \int_{m\in A\Mod} G(m) \otik m^\wee = G(A)
  \label{eq:end-G}
  \ee
with the natural $B$-$A$-bimodule structure on $G(A)$. Its dinatural family is
  \be
  i^A_m := G(\hat\rho_m) :\quad  G(A) \to G(m) \otik m^\wee
  \qquad{\rm with}\quad~
  \hat\rho_m := (\rho_m \otik \id_{m^\wee}) \circ (\id_A \oti \coev_m)
  \label{eq:i^A_m}
  \ee
  for $(m\,,\,\rho_m{:}\, A {\otimes} m {\rightarrow} m) \iN A\Mod$, 
with $\coev_m \iN \Ho\ko \ko{m\otik m^\wee}$ the coevaluation in $\vect$ and with
$m \otik m^\wee$ considered as an $A$-module via the action of $A$ on the left tensorand. 
\\[2pt]
Similarly, the coend of the functor $\widetilde G$ is
  \be
  \int^{m\in A\Mod}\!\! G(m) \otik m^\wee = G(A^\wee_{})
  \label{eq:coend=G(A*)}
  \ee
with dinatural family 
  \be
  i^{A^\wee}_m := G(\widetilde\rho_m) \qquad{\rm with}\quad~
  \widetilde\rho_m :=
  \big( \id_{A^\wee_{\phantom|}} \otik [ \ev_m \cir (\rho_m \otik \id_\mwee) ] \big)
  \circ ( \coev_{A^\wee_{}} \otik \id_m \otik \id_{m^\wee_{}} ) \,,
  \label{eq:i^A*_m}
  \ee
where $\ev_m$ is the evaluation in $\vect$.
\end{Proposition}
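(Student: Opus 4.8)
The plan is to treat the two claims asymmetrically: the coend I would obtain directly from the convolution property \eqref{eq:delta-property}, while for the end I would argue by representability. For the coend, I would first record a natural isomorphism $m^\wee \cong \Ho A m{A^\wee}$ of right $A$-modules: a left $A$-linear map $m\To{}_A^{}A_A^\wee$ is the same as a functional on $A\otA m\cong m$, and one checks that the resulting identification $\Hom_A(m,A^\wee)\cong\Hom_\ko(m,\ko)=m^\wee$ intertwines the right $A$-action inherited from the co-regular bimodule $A^\wee$ with the standard one on $m^\wee$. Substituting this into the integrand of the coend and reordering the vector-space tensor factor rewrites it as $\int^{m\iN A\Mod}\Ho A m{A^\wee}\otik G(m)$, which by the convolution property \eqref{eq:delta-property} of the Hom functor, applied to $G$ with the free variable evaluated at $A^\wee$, equals $G(A^\wee)$.

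For the end no such substitution is available: there is no fixed module $x$ with $\Hom_A(m,x)\cong m$ naturally, and the sidedness of the dual-Hom convolution \eqref{eq:codelta-property} does not match. Instead I would test $\int_{m\iN A\Mod} G(m)\otik m^\wee$ against an arbitrary bimodule $X\iN B\BimoD A$ by means of \eqref{eq:coend-hom2r}, giving $\Ho{B|A}{X}{\int_m G(m)\otik m^\wee}\cong\int_m\Ho{B|A}{X}{G(m)\otik m^\wee}$. Writing $G(m)\otik m^\wee\cong\Hom_\ko(m,G(m))$ as a $B$-$A$-bimodule and applying the tensor-hom adjunction yields $\Ho{B|A}{X}{G(m)\otik m^\wee}\cong\Ho B{X\otA m}{G(m)}$, naturally in $m$; taking the end and invoking \eqref{eq:end-nat-trans} identifies the result with the space $\Nat(X\otA -,G)$ of natural transformations.

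The heart of the argument is then the lemma that $\Nat(X\otA -,G)\cong\Ho{B|A}{X}{G(A)}$, naturally in $X$. One direction sends a natural family $\eta$ to $\eta_A\colon X\cong X\otA A\To G(A)$; that this is right $A$-linear is precisely naturality of $\eta$ with respect to the right multiplications $r_\alpha\colon A\To A$, using that the right $A$-action on $G(A)$ was defined as $\alpha\mapsto G(r_\alpha)$. Conversely, a bimodule map $\psi\colon X\To G(A)$ is sent to the family $\eta_m(x\otA y):=G(\lambda_y)(\psi(x))$, where $\lambda_y\colon A\To m$, $\alpha\mapsto\alpha y$, is the left-module map determined by $y\iN m$; balancedness over $A$, $B$-linearity and naturality in $m$ all follow from the relations $\lambda_{\alpha y}=\lambda_y\circ r_\alpha$ and $f\circ\lambda_y=\lambda_{f(y)}$, and since $\lambda_1=\id_A$ the two assignments are mutually inverse. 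Combining the displayed isomorphisms, all natural in $X$, exhibits $G(A)$ as the end via the Yoneda lemma in $B\BimoD A$.

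It remains to verify that the universal dinatural families are the asserted ones. For the end this means tracing $\id_{G(A)}$ through the chain of isomorphisms: it corresponds to the transformation with $\eta_A=\id$, hence to the family $x\mapsto(y\mapsto G(\lambda_y)(x))$, and a short computation with dual bases of $m$ shows this equals $G(\hat\rho_m)$, i.e.\ the family $i^A_m$ of \eqref{eq:i^A_m} (here one uses that the linear functor $G$ satisfies $G(m\otik m^\wee)\cong G(m)\otik m^\wee$). The coend family $i^{A^\wee}_m$ of \eqref{eq:i^A*_m} is matched analogously by tracing $\id_{G(A^\wee)}$ back through the convolution isomorphism. I expect the main obstacle to be exactly this bookkeeping: checking that \eqref{eq:delta-property} and the tensor-hom adjunction are compatible with the right $A$-actions, so that the abstract isomorphisms are genuinely isomorphisms of $B$-$A$-bimodules, and that they carry the canonical generator ($\id$ on $G(A)$, respectively $G(A^\wee)$) to the explicitly given dinatural families rather than merely to some isomorphic family.
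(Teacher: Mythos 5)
Your proposal is correct, but it proves the statement by a genuinely different route than the paper. The paper verifies the universal property of the end by hand: it checks directly that $i^A_m \,{=}\, G(\hat\rho_m)$ is a well-defined dinatural family of bimodule maps, shows that an arbitrary dinatural family $j^z$ factors as $j^z_m \eq i^A_m \cir \kappa_z$ with $\kappa_z \eq (\id_{G(A)} \otik \eta_A^\wee)\cir j^z_A$ by applying dinaturality to suitably chosen morphisms, and obtains uniqueness of $\kappa_z$ from an explicit left inverse of $i^A_A$; the coend case is declared analogous. You instead get the coend almost for free from the convolution property \eqref{eq:delta-property} via the bimodule identification $m^\wee \,{\cong}\, \Ho A m{\Awee}$, and reduce the end to representability, with the classical Eilenberg-Watts computation $\Nat(X\otA -,G)\,{\cong}\,\Holr BA X{G(A)}$ as the key lemma -- whose two directions ($\eta\mapsto\eta_A$ and $\psi\mapsto G(\lambda_y)\cir\psi$) you verify correctly, including the balancing over $A$ and the inverse property via $\lambda_{\eta_A}\,{=}\,\id$. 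Both arguments are sound. The paper's approach is elementary and self-contained, keeps the bimodule structure in view throughout, and delivers the explicit dinatural family and the uniqueness statement with no further tracing. Yours is conceptually more transparent about \emph{why} $G(A)$ and $G(\Awee)$ appear (they are the images of the objects representing, respectively corepresenting, the identity functor), and it reuses the Yoneda machinery of Section \ref{sec:delt} with no circularity; the price is exactly the bookkeeping you flag at the end -- checking that the abstract isomorphisms respect the right $A$-actions (equivalently, that the (co)end in $B\BimoD A$ is computed by the underlying (co)end in $B\Mod$ with its induced action) and the dual-basis computation identifying the image of $\id_{G(A)}$ with $G(\hat\rho_m)$. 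These verifications are routine and you have identified all of them, so there is no gap.
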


\begin{proof}
We show the statement for the end; the proof for the coend is analogous.
\\[2pt]
(i)\,
By linearity of $G$ there is a natural isomorphism 
$G(m \otik m^\wee) \,{\cong}\, G(m) \otik m^\wee$ of left $B$-mo\-du\-les. Since 
$\hat\rho_m$ is a morphism of left $A$-modules, $G(\hat\rho_m)$ is a morphism of left 
$B$-modules. 
{}From the functoriality of $G$ and the module property of $m$ it follows that 
  \be
  i_{m}^{A} \circ G(r_\alpha) = (\id_{G(m)}\otik  r_\alpha^\wee) \circ i_{m}^{A} 
  \label{eq:5}
  \ee
for all $\alpha \iN A$, which tells us that $i_m^A$ is a morphism of right $A$-modules.
Thus in total we have
 \be
 i_m^A \in
  \Holr BA {G(A)} {\widetilde G(m,\ol m)} \,,
  \ee
i.e.\ $i_m^A$ is well defined. 
Further, invoking the fact that $\rho_m$ is a bimodule morphism and the naturality of the
coevaluation one now sees that the family $i^A \eq (i^A_m)_{m\in A\Mod}$ is dinatural.
\\[2pt]
(ii)\,
Assume that $j_m^z \iN \Holr BA z {\widetilde G(m,\ol m)}$ is any dinatural
family from an object $z \iN B\BimoD A$ to $\widetilde G$.
Applying dinaturalness of $j^z$ to the morphisms $(\id_{G(A)} \otik \mu^\wee) \cir i_m^A$ 
for $m \iN A\Mod$ and $\mu^\wee \iN m^\wee$ gives
  \be
  (G(r_\mu) \otik \id_\Awee) \circ j^z_A = (\id_{G(m)} \otik r_\mu^\wee) \circ j^z_m \,.
  \label{eq:..jzA..jzm}
  \ee
We denote, for $w\iN\vect$, by $\ev_w\iN \Ho\ko {w^\wee \otik w}\ko$ the evaluation map and
by $f^\wee \eq (\ev_w \otik \id_{v^\wee}) \,{\circ} 
          $\linebreak[0]$
(\id_{w^\wee} \otik f \otik \id_{v^\wee})
\cir (\id_{w^\wee} \otik \coev_v)$ the $\vect$-dual of a linear map $f \iN \Ho\ko vw$.
Post-composing the equality \eqref{eq:..jzA..jzm} with $\id_{G(m)} \otik \eta_A^\wee$, 
where $\eta_A^\wee \eq \ev_A \cir (\id_\Awee \oti \eta_A)$ is the dual of the unit
$\eta_A \iN \Ho\ko \ko A$ of $A$, results in
 $  (G(r_\mu) \otik \eta_A^\wee) \cir j^z_A \eq (\id_{G(m)} \otik \mu^\wee) \cir j^z_m $
for all $\mu \iN m$, implying that
  \be
  (i_m^A \otik \eta_A^\wee) \circ j^z_A
  = (\id_{G(m)} \otik \id_{m^\wee_{}}) \circ j^z_m = j^z_m \,.
  \ee
Thus the dinatural family for $z$ is related to the one for $A$ by 
  \be
  j_m^z = i_m^A \circ \kappa_z
  \label{eq:jmz=imA.kappa}
  \ee
for all $m \iN A\Mod$, with $ \kappa_z \,{:=}\, (\id_{G(A)} \otik \eta_A^\wee) \cir j^z_A $.
\\[2pt]
(iii)\,
It remains to show that $\kappa_z$ is the unique morphism satisfying
\eqref{eq:jmz=imA.kappa} for all $m$.
This is seen by noting that $i_A^A \cir \kappa \eq j_A^z$ as a special case of 
\eqref{eq:jmz=imA.kappa}, and that, by the calculation
  \be
  \bearll
  (\id_{G(A)} \otik \eta_A^\wee) \circ i_A^A \!\!&
  = (\id_{G(A)} \otik \eta_A^\wee) \circ (\psi_A \otik \id_{A^\wee_{}})
  \circ (\id_{G(A)} \otik \coev_A)
  \Nxl3 &
  = \psi_A \circ (\id_{G(A)} \otik \eta_A^{})
  = G(r_{\eta_A^{}}^{}) = G(\id_A^{}) = \id_{G(A)}^{} \,,
  \eear
  \ee
$i_A^A$ has a left inverse.
\end{proof}

Taking $G$ to be the identity functor, as a special case of Proposition \ref{prop:end(GotikWee}
we have the following Peter-Weyl type result which provides a Morita invariant characterization 
of regular and co-regular bimodules:

\begin{cor}\label{cor:intAmod...=A*}
For any \findim\ \ko-algebra $A$ there are isomorphisms
  \be
  \int_{m\in A\Mod} m \otik \mwee \,\cong\, {}_A A_A
  \label{eq:end=A}
  \ee
and
  \be
  \int^{m\in A\Mod}\! m \otik \mwee \,\cong\, {}_A^{} A_A^\wee
  \label{eq:coend=A*}
  \ee
of $A$-bimodules. The dinatural transformations of the end and coend are given by the families
$ i_m^A \,{:=}\, (\rho_m \otik \id_{m^\wee}) \circ (\id_A \oti \coev_m) $ and
  \be
  i_m^{A^\wee} := (\id_\Awee \oti \ev_m) \circ (\id_\Awee \oti \rho_m \oti \id_\mwee)
  \circ (\coev_A \oti \id_m \oti \id_\mwee)
  \label{eq:i_m}
  \ee
of linear maps for $(m,\rho_m) \iN A\Mod$, respectively.
\end{cor}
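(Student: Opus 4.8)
The plan is to obtain the corollary as the special case $B\eq A$ and $G\eq\id_{A\Mod}$ of Proposition \ref{prop:end(GotikWee}. For the identity functor the functor $\widetilde G$ of \eqref{eq:widetildeG} sends $m\Times\ol n$ to $m\otik n^\wee$, so the end and coend in question are precisely the two bimodules whose underlying objects are $\id_{A\Mod}(A)\eq A$ and $\id_{A\Mod}(A^\wee)\eq A^\wee$. What remains is to identify the bimodule structures that Proposition \ref{prop:end(GotikWee} attaches to these objects and to read off the dinatural families.

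First I would check the bimodule structures. For $G\eq\id$ the right $A$-action on $G(A)$ supplied in the paragraph before Proposition \ref{prop:end(GotikWee}, namely $\alpha\,{\mapsto}\,\id(r_\alpha)\eq r_\alpha$, is just right multiplication, while the left action is the tautological one on $A\iN A\Mod$; hence the end is the regular bimodule ${}_AA_A$. For the coend the object $A^\wee$ carries, as an object of $A\Mod$, the left action dual to right multiplication on $A$, while the analogous construction (using that the left multiplications $\ell_\alpha$ are morphisms of the right regular module, so that their duals are endomorphisms of the left module ${}_A^{}A^\wee$) endows it with the right action dual to left multiplication; these are exactly the actions of the co-regular bimodule ${}_A^{}A_A^\wee$. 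This identifies the two underlying objects with the bimodules claimed in \eqref{eq:end=A} and \eqref{eq:coend=A*}.

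It then remains to specialize the dinatural families, which is where the only genuine bookkeeping occurs. Since $G\eq\id$ we have $i_m^A\eq\id(\hat\rho_m)\eq\hat\rho_m$, which is verbatim the stated expression $(\rho_m\otik\id_{m^\wee})\circ(\id_A\oti\coev_m)$, so the end case needs nothing further. For the coend $i_m^{A^\wee}\eq\id(\widetilde\rho_m)\eq\widetilde\rho_m$ with $\widetilde\rho_m$ as in \eqref{eq:i^A*_m}; to bring this into the form \eqref{eq:i_m} I would use the canonical identification $A^{\wee\wee}\,{\cong}\,A$ to trade $\coev_{A^\wee}$ for $\coev_A$ and then apply the snake (zig-zag) identities for the (co)evaluations in $\vect$ to move the $A^\wee$-leg past the evaluation. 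I expect this rigidity rewriting to be the only step requiring care; everything else is an immediate readout from Proposition \ref{prop:end(GotikWee}.
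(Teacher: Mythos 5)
Your proposal is correct and follows exactly the route the paper takes: the paper states this corollary with no separate proof, simply as the special case $G\eq\id_{A\Mod}$ (and $B\eq A$) of Proposition \ref{prop:end(GotikWee}, which is precisely your plan. Your additional bookkeeping — identifying the (co)regular bimodule structures and matching $\widetilde\rho_m$ from \eqref{eq:i^A*_m} with \eqref{eq:i_m} via the canonical identification $A^{\wee\wee}\,{\cong}\,A$ — is sound (if anything, the zig-zag step is slightly more than is needed, since after expanding the bracketed composite the two expressions agree directly under that identification).
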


As already pointed out, no exactness of the functors is assumed in these results. Since for 
any finite linear category $\calm$ we can choose an algebra $A$ and an equivalence 
$\calm \,{\simeq}\, A\Mod$, we 
can transport them to the setting of finite linear categories. For doing so we also need
an analogue of the tensor product over $\ko$; this is provided by the Deligne product.

Recall \cite[Sect.\,5]{deli} that the Deligne product $\cala \boti \calb$ of two finite linear
categories is the linear category that is universal for right exact functors with domain 
$\cala \Times \calb$. That is, there is a functor 
$\boxtimes\colon \cala \Times \calb \To \cala \boti \calb$, exact in each variable,
such that for any linear category $\calx$ every right exact (in both variables) functor
$F$ from $\cala \Times \calb$ factorizes uniquely (up to unique natural isomorphism)
as $F \eq \tilde F \cir {\boxtimes}$ with 
$\tilde F\colon \cala \boti \calb \To \calx$ being right exact. Thus in particular we have 
an equivalence $\Funre(\cala\Times\calb, \calx) \,{\simeq}\, \Funre(\cala\boti\calb, \calx)$,
functorial in $\calx$,
of categories of right exact functors. By considering opposite functors and using that
the opposite of a right exact functor is left exact, the Deligne product is universal with 
respect to \emph{left} exact functors as well.
Also note that for $A$ and $B$ \findim\ \ko-algebras we have linear isomorphisms
  \be 
  A\Mod \boxtimes \moD B
  \,\cong\, A\Mod \boxtimes \Bop\Mod
  \,\cong\, (A \otik \Bop)\Mod
  \,\cong\, A\BimoD B \,.
  \ee

The Peter-Weyl results above thus imply

\begin{cor}\label{cor:coend-exists}
For every linear functor $F\colon \calm \To \caln$ between finite linear categories, 
the end $\int_{m \in \calm} F(m) \boti \overline{m}$ 
and the coend $\int^{m \in \calm}\! F(m) \boti \overline{m}$ in $\caln \boti \calmopp$ exist.
\end{cor}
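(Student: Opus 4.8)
The plan is to reduce the statement to Proposition~\ref{prop:end(GotikWee} by transporting everything along equivalences with module categories. Since $\calm$ and $\caln$ are finite linear categories, I would first choose \findim\ $\ko$-algebras $A$ and $B$ together with equivalences $\calm\simeq A\Mod$ and $\caln\simeq B\Mod$. Under these equivalences the given functor $F\colon\calm\To\caln$ corresponds to a linear functor $G\colon A\Mod\To B\Mod$, to which Proposition~\ref{prop:end(GotikWee} applies, since no exactness of $F$ (hence of $G$) is assumed there either.

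Next I would identify the target category $\caln\boti\calmopp$. Combining the duality equivalence $(-)^\wee\colon A\Mod^\opp\xrightarrow{\simeq}\moD A$, $\ol m\mapsto m^\wee$, with the identification of a Deligne product of module categories with a bimodule category, one obtains $\caln\boti\calmopp\simeq B\Mod\boti\moD A\simeq B\BimoD A$, which is precisely \eqref{i:BbimodA} for the present choice of algebras. The key bookkeeping step is then to check that, along this chain of equivalences, the two-variable functor $m\Times\ol n\mapsto F(m)\boti\ol n$ on $\calm\Times\calmopp$ is carried to $m\Times\ol n\mapsto G(m)\otik n^\wee$, i.e.\ to the functor $\widetilde G$ of Proposition~\ref{prop:end(GotikWee}; in particular its diagonal $m\mapsto F(m)\boti\ol m$ corresponds to $G(m)\otik m^\wee$, with matching dinatural structure.

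Finally, (co)ends are characterized by a universal property among (di)natural transformations, and such universal objects are preserved and reflected by equivalences of categories. Hence the equivalence $\caln\boti\calmopp\simeq B\BimoD A$ turns a (co)end of $m\mapsto F(m)\boti\ol m$ into one of $m\mapsto G(m)\otik m^\wee$ and conversely. By Proposition~\ref{prop:end(GotikWee} the end $\int_{m\in A\Mod}G(m)\otik m^\wee=G(A)$ and the coend $\int^{m\in A\Mod}G(m)\otik m^\wee=G(A^\wee)$ both exist in $B\BimoD A$ (for $F\eq\id$ this is already Corollary~\ref{cor:intAmod...=A*}); transporting these back along the equivalence yields the desired end $\int_{m\in\calm}F(m)\boti\ol m$ and coend $\int^{m\in\calm}F(m)\boti\ol m$ in $\caln\boti\calmopp$. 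I expect the only genuinely delicate point to be this intertwining of functors: verifying that the equivalence $\caln\boti\calmopp\simeq B\BimoD A$ is natural enough to carry $F(m)\boti\ol m$ to $G(m)\otik m^\wee$ together with its dinatural family, which relies on the functoriality of the Deligne product in both arguments and on the coherence of the duality functor $(-)^\wee$; everything else is routine.
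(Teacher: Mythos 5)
Your proposal is correct and is precisely the argument the paper intends (the paper merely states that ``the Peter-Weyl results above thus imply'' the corollary, after recording the identification $A\Mod\boxtimes\moD B\cong A\BimoD B$): choose equivalences $\calm\simeq A\Mod$, $\caln\simeq B\Mod$, identify $\caln\boti\calmopp$ with $B\BimoD A$ so that $F(m)\boti\ol n$ corresponds to $G(m)\otik n^\wee$, and transport the end $G(A)$ and coend $G(A^\wee)$ of Proposition~\ref{prop:end(GotikWee} back along the equivalence, which preserves and reflects (co)ends. Your explicit attention to the intertwining of the two-variable functors and their dinatural families is exactly the bookkeeping the paper leaves implicit.
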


\begin{Remark}
For the case that $A$ is a Hopf algebra, the assertion of Corollary \ref{cor:intAmod...=A*} 
has already been shown in \cite[Prop.\,A.3]{fuSs3}. The proof in \cite{fuSs3} is obtained 
from the one above by considering the right $A$-action on $A$ that is obtained from the
left action by means of the antipode of $A$.
\end{Remark}


\section{Eilenberg-Watts calculus for finite linear categories}

If $\cala$ and $\calb$ are finite linear categories, then there are \findim\ algebras
$A$ and $B$ such that $\cala \,{\simeq}\, A\Mod$ and $\calb \,{\simeq}\, B\Mod$.
The Eilenberg-Watts results of Lemma \ref{lem:shimi7:2.6} then imply that the functor 
category $\Funre(\cala,\calb)$ is equivalent to $B$-$A\Bimod$, which in turn is equivalent 
to the Deligne product 
$\calaopp \boti \calb $. In principle the resulting equivalence
  \be
  \Funre(\cala,\calb) \,\simeq\, \calaopp \boti \calb
  \ee
of finite linear categories might depend on the choice of algebras $A$ and $B$.
In the sequel we will analyze this equivalence, as well as a corresponding equivalence for 
left exact functors, without having to refer to any choice of algebras.
Thus in particular these equivalences do not depend on those individual algebras, but only on 
their Morita classes.
Our discussion will strongly rely on the use of ends and coends, the relevance of which 
may be traced back to Proposition \ref{prop:end(GotikWee}.

\subsection{A triangle of adjoint equivalences}\label{subsec:adjeqv}

In this subsection we establish, for any two finite \ko-linear categories $\cala$ and 
$\calb$, adjoint equivalences between the Deligne product $\calaopp \boti \calb$
and the categories $\Funle(\cala,\calb)$ and $\Funre(\cala,\calb)$ of left exact
and right exact functors, respectively, from $\cala$ to $\calb$.
These equivalences imply in particular that these functor categories are finite linear 
categories. For the case of $\Funle(\cala,\calb)$ this equivalence has been established in 
\cite[Rem.\,2,2(i)]{etno2} and \cite[Lemma\,3.2]{shimi7}; the result for $\Funre(\cala,\calb)$ 
follows by considering in addition to the Hom functor also the dual Hom functor.

We start by 

\begin{Definition}
Let $\cala$ and $\calb$ be finite linear categories. The (abstract) \emph{Eilenberg-Watts
functors} for $\cala$ and $\calb$ are the following four linear functors:
  \be
  \begin{array}{lrcl}
  \Phile_{} \equiv \Phile_{\!\cala,\calb}:
  & \calaopp \boti \calb & \longrightarrow & \Funle(\cala,\calb) 
  \Nxl1
  & \ol a \boti b & \longmapsto & \Ho\cala a- \oti b \,,
  \Nxl4
  \Psile_{} \equiv \Psile_{\!\cala,\calb}:
  & \Funle(\cala,\calb) & \longrightarrow & \calaopp \boti \calb
  \Nxl1
  & F & \longmapsto & \int^{a\in\cala} \ol a \boti F(a) \,,
  \Nxl4
  \Phire_{} \equiv \Phire_{\!\cala,\calb}:
  & \calaopp \boti \calb & \longrightarrow & \Funre(\cala,\calb) 
  \Nxl1
  & \ol a \boti b & \longmapsto & \Hox\cala -a \oti b \,,
  \Nxl4
  \Psire_{} \equiv \Psire_{\!\cala,\calb}:
  & \Funre(\cala,\calb) & \longrightarrow & \calaopp \boti \calb
  \Nxl1
  & G & \longmapsto & \int_{a\in\cala} \ol a \boti G(a) \,.
  \eear
  \label{Phile...Psire}
  \ee
\end{Definition}

Note that the functors $\Phile$ and $\Phire$ take a simple form only when applied 
to $\boxtimes$-factorized objects. We will have to apply them also on objects that are ends
or coends, in which case we write for example
  \be
  \Phile(\int^a \ol a \boti F(a))
  = \Big[ \Hom_\cala(\reflectbox?,-) \oti ? \Big] (\int^a \ol a \boti F(a)) \,.
  \ee
We also abbreviate
  \be
  \bearll &
  \Gammarl := \Phire \circ \Psile ~:~ \Funle(\cala,\calb) \to \Funre(\cala,\calb) 
  \Nxl4 {\rm and}\quad &
  \Gammalr := \Phile \circ \Psire ~:~ \Funre(\cala,\calb) \to \Funle(\cala,\calb) \,.
  \eear
  \label{eq:Gammalr,Gammarl}
  \ee
Using that the Hom functor, being left exact, preserves ends and that the dual Hom functor
preserves coends, this implies
  \be
  \Gammarl(F) = \int^{a\in\cala}\!\! \Hox\cala -a \oti F(a) \qquad{\rm and}\qquad
  \Gammalr(G) = \int_{a\in\cala} \Ho\cala a- \oti G(a)
  \label{eq:Gammarl,Gammalr}
  \ee
for $F \iN \Funle(\cala,\calb)$ and for $G \iN \Funre(\cala,\calb)$, respectively.
In the definition of $\Phile$ we use that the Deligne product is exact and universal with 
respect to left exact functors. In the definition of $\Psile$ and $\Psire$ we use the fact,
seen in Corollary \ref{cor:coend-exists}, that the coend and end, respectively, exist,
and that they are functorial with respect to natural transformations.

For ease of reference we collect these functors in the following diagram:
 \\[-1.05em]
  \be
  \begin{tikzcd}[row sep=11ex]
  ~ & \calaopp \boti \calb ~ \ar{dl}[xshift=-2pt]{\Phile} \ar[xshift=-2pt]{dr}[swap]{\Phire}
  & ~ \\
  \Funle(\cala,\calb) \ar[yshift=3pt]{rr}[yshift=1.3pt]{\Gammarl} \ar[xshift=-12pt]{ur}[xshift=2pt]{\Psile}
  & ~ & \Funre(\cala,\calb) \ar[yshift=-3pt]{ll}{\Gammalr} \ar[xshift=12pt]{ul}[swap]{\Psire}
  \end{tikzcd}
  \label{tria:AoppB-lex-rex}
  \ee

\begin{thm}\label{thm:triangle}[Categorical Eilenberg-Watts theorem] \\
For any pair of finite \ko-linear categories $\cala$ and $\calb$ the functors in the 
triangle \eqref{tria:AoppB-lex-rex} constitute quasi-inverse pairs of adjoint equivalences
  \be
  \Funle(\cala,\calb) \,\simeq\, \calaopp \boti \calb \,\simeq\, \Funre(\cala,\calb) \,.
  \label{Funle=boti=Funre}
  \ee
\end{thm}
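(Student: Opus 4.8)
The plan is to prove the two equivalences in \eqref{Funle=boti=Funre} separately but symmetrically, the left exact one through the Hom functor and the right exact one through the dual Hom functor, and to realize each as an \emph{adjoint} equivalence: I will first set up the adjunction on the nose and then show that its unit and counit are isomorphisms. I concentrate on the pair $(\Psile,\Phile)$; the pair $(\Psire,\Phire)$ is then treated by the dual argument, with ends and coends and the covariant and contravariant Yoneda lemmas interchanged and \eqref{eq:codelta-property} used in place of \eqref{eq:delta-property}. Once both equivalences are established, the commutativity of the triangle \eqref{tria:AoppB-lex-rex}, and in particular that $\Gammarl$ and $\Gammalr$ are mutually quasi-inverse equivalences, follows formally, since a composite of adjoint equivalences is again one.

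The first step is the adjunction $\Psile\dashv\Phile$, which is where the Yoneda lemma enters. For a factorized object $\ol{a'}\boti b'$ and $F\iN\Funle(\cala,\calb)$ I would compute both relevant morphism spaces and see that they agree: on one side the Hom functor turns the coend defining $\Psile(F)$ into an end by \eqref{eq:coend-hom}, and after using the factorization of morphism spaces in the Deligne product and \eqref{Hom(av,a'v')} one is left with an end over $\cala$; on the other side \eqref{eq:end-nat-trans} together with \eqref{Hom(av,a'v')} produces the very same end, so that
\[
\Ho{\calaopp\boti\calb}{\Psile(F)}{\ol{a'}\boti b'}\;\cong\;\int_{a\in\cala}\Ho\cala{a'}a\otik\Ho\calb{F(a)}{b'}\;\cong\;\Ho{\Funle}{F}{\Phile(\ol{a'}\boti b')}\,.
\]
Naturality in $\ol{a'}\boti b'$, and extension from factorized objects to all objects, then yields the adjunction isomorphism $\Ho{\calaopp\boti\calb}{\Psile(F)}{X}\cong\Ho{\Funle}{F}{\Phile(X)}$, exhibiting $\Psile$ as left and $\Phile$ as right adjoint.

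Next I would check that the counit $\Psile\Phile\Rightarrow\id$ is an isomorphism. Since $\Psile$ is a left adjoint it is cocontinuous, so on a factorized object
\[
\Psile\Phile(\ol a\boti b)=\int^{a'\in\cala}\ol{a'}\boti\bigl(\Ho\cala a{a'}\oti b\bigr)\cong\Bigl(\int^{a'\in\cala}\Ho\cala a{a'}\oti\ol{a'}\Bigr)\boti b\cong\ol a\boti b\,,
\]
the last isomorphism being the density (co-Yoneda) isomorphism in $\calaopp$, i.e.\ \eqref{eq:delta-property} applied to $\id_{\calaopp}$. One verifies this agrees with the counit; extending to all objects (factorized objects generate $\calaopp\boti\calb$) one concludes that $\Phile$ is fully faithful.

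The remaining, and genuinely substantial, point is that the unit $\id\Rightarrow\Phile\Psile$ is an isomorphism, equivalently that $\Psile$ is fully faithful, and I expect this to be the main obstacle. The difficulty is that it cannot be obtained by formally commuting $\Phile$ past the coend $\Psile(F)=\int^{a}\ol a\boti F(a)$: being a right adjoint, $\Phile$ is only continuous, and the evaluation functor $Z\mapsto\Phile(Z)(x)$ is left exact rather than right exact, so it need not preserve this colimit. The clean way to settle it is to feed in the Eilenberg-Watts/Peter-Weyl input of Proposition \ref{prop:end(GotikWee}: choosing $A,B$ with $\cala\simeq A\Mod$ and $\calb\simeq B\Mod$ and identifying $\calaopp\boti\calb$ with $B$-$A$-bimodules, the coend formula \eqref{eq:coend=G(A*)} gives $\Psile(F)\cong F(A^\wee)$ as a bimodule, and by the classical Eilenberg-Watts statement of Lemma \ref{lem:shimi7:2.6}(L3) and Lemma \ref{lem:lexrexbimod}(i) the assignment $F\mapsto F(A^\wee)$ is, up to the exact duality $(-)^\wee$, an equivalence and hence fully faithful. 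This establishes the unit isomorphism and completes the adjoint equivalence $\Funle(\cala,\calb)\simeq\calaopp\boti\calb$; the convolution property \eqref{eq:delta-property} serves as a consistency check, since it identifies the object corepresenting the dinatural families $\Ho\cala a-\oti F(a)\Rightarrow G$ with $F$. The right exact half, and with it \eqref{Funle=boti=Funre} in full, then follows by the dual computation using \eqref{eq:codelta-property} and the end statement for $\Psire$.
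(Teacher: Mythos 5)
Your proposal is correct, and its core step --- establishing the adjunction $\Psile\dashv\Phile$ via the Yoneda computation
$\Ho{\calaopp\boxtimes\calb}{\Psile(F)}{\ol{a'}\boti b'}\cong\int_{a\in\cala}\Ho\cala{a'}a\otik\Ho\calb{F(a)}{b'}\cong\Nat(F,\Phile(\ol{a'}\boti b'))$
and then upgrading to an adjoint equivalence --- is exactly the paper's computation \eqref{eq:proof-adj-l}. You differ from the paper in two places. First, for the fact that $\Phile$ and $\Psile$ are quasi-inverse the paper simply cites \cite[Lemmas 3.2 and 3.3]{shimi7}, whereas you give a self-contained argument: the counit is inverted on $\boxtimes$-factorized objects by co-Yoneda, and $\Psile$ is shown to be an equivalence by identifying $\Psile(F)\cong F({}_A^{}A_A^\wee)$ through Proposition \ref{prop:end(GotikWee} and invoking Lemma \ref{lem:lexrexbimod}(i). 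This is a legitimate and more transparent route; note that once $\Psile$ is known to be an equivalence and left adjoint to $\Phile$, the counit computation becomes a mere consistency check --- which is just as well, since your ``extension to all objects'' of the counit isomorphism would otherwise need an argument you do not supply ($\Psile\cir\Phile$ is not obviously right exact before the equivalence is known, so passing from factorized objects to arbitrary ones via presentations is not automatic). Second, for the right exact half the paper does not redo the dual computation but derives the pair $(\Phire,\Psire)$ from $(\Phile,\Psile)$ by passing to opposite categories and identifying $(\Phile)^\opp$ with $\Phire$ and $(\Psile)^\opp$ with $\Psire$, using \eqref{eq:abar.v=a.v*bar} and \eqref{eq:opp-coend=end}; your direct dualization works equally well but duplicates work. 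The final step --- composing the two adjoint equivalences to obtain $\Gammalr$ and $\Gammarl$ --- is the same in both arguments.
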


\begin{proof}
(i)\, That the functor $\Phile$ is an equivalence is shown in \cite[Lemma\,3.2]{shimi7},
and that $\Psile$ is a quasi-inverse of $\Phile$ in \cite[Lemma.\,3.3]{shimi7}. 
\\[3pt]
(ii)\, To obtain the second equivalence in \eqref{Funle=boti=Funre} we apply the first one 
after taking opposites. Note that for $\ol a \iN \cala^\opp$ and $v \iN \vect$ we have
  \be
  \ol a \oti v = \ol {a \oti v^\wee} ~\iN \calaopp \,,
  \label{eq:abar.v=a.v*bar}
  \ee
and that by Lemma \ref{lem:lexrexbimod}
$ \Funre(\cala,\calb)$ is equivalent to $\big( \Funle(\calaopp,\calbopp) \big)^\opp_{}.$  
The equivalences $\Phile$ and $\Psile$ give a pair of quasi-inverse equivalences
  \be
  \bearl
  (\Phile)^\opp_{}:\quad
  \calaopp \boti \calb \,\simeq\, \big( \cala \boti \calbopp \big)^\opp_{}
  \xrightarrow{~\simeq~}\, \big( \Funle(\calaopp,\calbopp) \big)^\opp_{} \qquad{\rm and}
  \Nxl2
  (\Psile)^\opp_{}:\quad \big( \Funle(\cala^\opp,\calb^\opp) \big)^\opp_{}
  \xrightarrow{~\simeq~}\, \big( \cala \boti \calb^\opp \big)^\opp_{}
  \,\simeq\, \calaopp \boti \calb \,.
  \eear
  \label{eq:Psile-opp}
  \ee
The so defined functor $(\Phile)^\opp_{}$  maps the object $\ol a \boti b \iN \cala^\opp
\boti \calb$ to the opposite of the functor $\Hom_{\cala^\opp}(a,-) \oti \ol b$, i.e.\
$ (\Phile)^\opp_{}(\ol a \boti b) \,{\stackrel{\eqref{eq:abar.v=a.v*bar}}=}\, \big( \Ho\cala 
- a \big)^\wee \oti b \eq \Phire(\ol a \boti b) $, while the functor $(\Psile)^\opp_{}$ 
equals $\Psire$: it maps $\ol F$ with $F \iN \Funle(\calaopp,\calbopp)$ to
  \be
  (\Psile)^\opp_{}(\ol F) = \ol{ \int^{\ol a \in \calaopp}\!\! a \boti \ol F(\ol a) }
  \,\cong \int_{a \in\cala} \ol a \boti F(a) = \Psire(\ol F) \,.
  \ee
(iii)\, Using formula \eqref{eq:end-nat-trans} we have
$ \int_{a'\in\cala} \Ho{\calaopp_{}\boxtimes\calb} {\ol{a'}\boti F(a')} {\ol a \boti b}
\,{\cong}\, \Nat(F,\Phile(\ol a {\otimes} b))$ and hence
  \be
  \hspace*{-1em}\bearll
  \Ho{\Fun(\cala,\calb)} F {\Phile(x)} \equiv \Nat(F,\Phile(x)) \!\!&\dsty
  \cong \int_{a\in\cala} \Hox{\calaopp\boxtimes\calb} {\ol a\boti F(a)} x
  \Nxl2&
  \cong \Ho{\!\calaopp\boxtimes\calb} {\int^{a\in\cala} \ol a\boti F(a)} x
  \,= \Ho{\calaopp\boxtimes\calb} {\Psile(F)} x
  \eear
  \label{eq:proof-adj-l}
  \ee
for $x \iN \calaopp\boti\calb$.
Thus $\Psile$ is left adjoint to $\Phile$. Similarly, the calculation
  \be
  \bearll
  \Ho{\Fun(\cala,\calb)} {\Phire(\ol a\boti b)} G
  \!\!& \dsty
  \cong \int_{a'\in\cala} \HO\calb {\Hox\cala {a'}a \oti b} {G(a')}
  \cong \int_{a'\in\cala} \Ho\cala {a'}a \otik \Ho\calb b{G(a')}
  \Nxl2&\dsty
  \cong \int_{a'\in\cala} \Ho{\calaopp\boxtimes\calb} {\ol a\boti b} {\ol{a'}\boti G(a')}
  \Nxl2&
  \cong \Ho{\calaopp\boxtimes\calb} {\ol a\oti b} {\int_{a'\in\cala}\ol{a'}\boti G(a')}
  \,= \Ho{\calaopp\boxtimes\calb} {\ol a\boti b} {\Psire(G)}
  \eear
  \label{eq:proof-adj-r}
  \ee
for $a\iN\cala$ and $b\iN\calb$
shows that (also invoking universality of the Deligne product) $\Psire$ is right adjoint 
to $\Phire$. Since the respective pairs of functors are equivalences, the adjunctions
are two-sided.
\\[3pt]
(iv)\,
That the functors $\Gammarl$ and $\Gammalr$ form an adjoint equivalence follows directly
by composing the relevant statements for the functors $\Phile,\,\Psile$ and 
$\Phire,\,\Psire$.
\end{proof}

The following rewritings will be convenient:

\begin{Lemma}\label{lem:Psile-as-end}
For any functor $F \iN \Funle(\cala,\calb)$ we have
  \be
  \Psile(F) \cong \int_{a\in\cala} \ol a \boti \Gammarl(F)(a) \,,
  \label{eq:Psile-as-end}
  \ee
and for any functor $G \iN \Funre(\cala,\calb)$ we have
  \be
  \Psire(G) \cong \int^{a\in\cala}\! \ol a \boti \Gammalr(G)(a) \,.
  \label{eq:Psire-as-coend}
  \ee
\end{Lemma}

\begin{proof}
The right hand side of \eqref{eq:Psile-as-end} is nothing but
$ \Psire(\Gammarl(F)) \,{\cong}\, \Psire \cir \Phire \cir \Psile(F) $ and thus,
$\Psire$ and $\Phire$ being quasi-inverse,  equals the left hand side.  
The isomorphism \eqref{eq:Psire-as-coend} follows analogously by recognizing the
right hand side as $\Psile(\Gammalr(G))$.
\end{proof}


\subsection{Hom pairings, compositions and (co)ends for finite categories}

In the proof of Theorem \ref{thm:triangle} we encountered the relations 
\eqref{eq:proof-adj-l} and \eqref{eq:proof-adj-r} for morphism spaces which involve
objects of the form $\Psile(F)$ or $\Psire(G)$. 
As we deal with an adjoint equivalence, $\Psile$ is not only left, but also right adjoint 
to $\Phile$. As a consequence there also exist relations for morphism spaces in which with 
respect to those in \eqref{eq:proof-adj-l} and \eqref{eq:proof-adj-r} the domain and codomain 
are interchanged. This provides the following `opposed' variant of the basic (co)end
preservation statement of Lemma \ref{lem:basic-coend-pres}, which is valid in the finite
linear case considered here; this result will be a rich source of identities relevant to us.

\begin{Proposition}\label{prop:matrixelements}
Let $\cala$ and $\calc$ be finite linear categories. Then for $F \iN \Funle(\cala,\calc)$
we have
  \be
  \HO{\!\cala^\opp\boxtimes\calc} {\ol a \boti c} {\int^{b\in\cala} \ol b \boti F(b)}
  \,\cong \int^{b\in\cala}\!\! \Ho{\cala^\opp\boxtimes\calc} {\ol a \boti c} {\ol b \boti F(b)} \,,
  \label{homAoppC-coend}
  \ee
while for $G \iN \Funre(\cala,\calc)$ we have
  \be
  \HO{\Calaopp\boxtimes\calc} {\int_{b\in\cala} \ol b \boti G(b)} {\ol a \boti c}
  \,\cong \int^{b\in\cala}\!\! \Ho{\calaopp\boxtimes\calc} {\ol b \boti G(b)} {\ol a \boti c} \,,
  \label{homAoppC-end}
  \ee
respectively, natural in $a\iN\cala$ and $c\iN\calc$.
\end{Proposition}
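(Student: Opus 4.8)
The plan is to establish each of the two asserted isomorphisms by reducing both of its sides to one and the same morphism space, rather than by exhibiting a comparison map directly. The conceptual input is the observation, recorded at the end of the proof of Theorem~\ref{thm:triangle}, that the pairs $(\Psile,\Phile)$ and $(\Phire,\Psire)$ are \emph{adjoint equivalences} and hence adjoint on both sides. Thus, besides the adjunctions $\Psile\dashv\Phile$ and $\Phire\dashv\Psire$ that produced \eqref{eq:proof-adj-l} and \eqref{eq:proof-adj-r}, we may also use the reversed adjunctions $\Phile\dashv\Psile$ and $\Psire\dashv\Phire$; these furnish exactly the morphism-space relations with domain and codomain interchanged relative to \eqref{eq:proof-adj-l} and \eqref{eq:proof-adj-r}.

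First I would treat \eqref{homAoppC-coend}. Recognising the coend as $\int^{b}\ol b\boti F(b)\eq\Psile(F)$ and invoking the reversed adjunction $\Phile\dashv\Psile$, the left-hand side becomes
\be
\HO{\calaopp\boti\calc}{\ol a\boti c}{\Psile(F)} \cong \Nat\big(\Phile(\ol a\boti c),F\big) = \Nat\big(\Ho\cala a-\oti c,\,F\big).
\ee
Rewriting this $\Nat$-space as an end via \eqref{eq:end-nat-trans}, extracting the vector space $\Ho\cala ab$ with \eqref{Hom(av,a'v')}, and then applying the convolution property \eqref{eq:codelta-property}, the end collapses to $\int_{b}\Hox\cala ab\otik\Ho\calc c{F(b)}\cong\Ho\calc c{F(a)}$. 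For the right-hand side I would factor the morphism space over the Deligne product, as already done inside \eqref{eq:proof-adj-r}, obtaining $\Ho{\calaopp\boti\calc}{\ol a\boti c}{\ol b\boti F(b)}\cong\Ho\cala ba\otik\Ho\calc c{F(b)}$, and then apply the convolution property \eqref{eq:delta-property} to get $\int^{b}\Ho\cala ba\otik\Ho\calc c{F(b)}\cong\Ho\calc c{F(a)}$. Both sides thus equal $\Ho\calc c{F(a)}$, naturally in $a$ and $c$.

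The second isomorphism \eqref{homAoppC-end} is handled in the same way, now with $\int_{b}\ol b\boti G(b)\eq\Psire(G)$ and the reversed adjunction $\Psire\dashv\Phire$, which gives $\HO{\calaopp\boti\calc}{\Psire(G)}{\ol a\boti c}\cong\Nat(G,\Phire(\ol a\boti c))=\Nat(G,\Hox\cala -a\oti c)$. Reducing this by \eqref{eq:end-nat-trans} and \eqref{Hom(av,a'v')}, and factoring the Deligne morphism space on the right-hand side, yields the two candidate expressions $\int_{b}\Hox\cala ba\otik\Ho\calc{G(b)}c$ and $\int^{b}\Ho\cala ab\otik\Ho\calc{G(b)}c$. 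Here the integration variable sits in the slot of $\Hox$ and of $\Ho$ opposite to the one in \eqref{eq:codelta-property}/\eqref{eq:delta-property}; using the identities $\Hox\cala ba=\Hox{\calaopp}{\ol a}{\ol b}$ and $\Ho\cala ab=\Ho{\calaopp}{\ol b}{\ol a}$ one recasts both (co)ends in the pattern of the convolution properties \emph{for the opposite category} $\calaopp$, and applying \eqref{eq:codelta-property} and \eqref{eq:delta-property} there collapses each expression to $\Ho\calc{G(a)}c$.

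The applications of \eqref{eq:end-nat-trans}, \eqref{Hom(av,a'v')} and of the Deligne factorization are routine. The one point demanding care is the variance bookkeeping: the `opposed' form of the convolution collapse arises precisely because the bound variable lies in the Hom-slot opposite to that in \eqref{eq:delta-property} and \eqref{eq:codelta-property}, so for \eqref{homAoppC-end} one must first rewrite the $\Ho$ and $\Hox$ factors in terms of $\calaopp$ before the Yoneda-type reductions apply --- this passage to the opposite category is exactly the footprint of the reversed adjunction. It then remains only to check that every step is natural in $a\iN\cala$ and $c\iN\calc$, which holds because each isomorphism invoked is itself natural.
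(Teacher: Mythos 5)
Your proof is correct and follows essentially the same route as the paper: both sides of each isomorphism are reduced to $\Ho\calc c{F(a)}$ (resp.\ $\Ho\calc {G(a)}c$), the left-hand side via the two-sided adjunction between $\Phile$ and $\Psile$ (resp.\ $\Phire$ and $\Psire$) together with \eqref{eq:end-nat-trans} and the Yoneda-type convolution properties, and the right-hand side by factoring the Deligne-product Hom and collapsing the (co)end. The only cosmetic difference is that for the right-hand sides the paper first rewrites $\Ho\calc c{F(b)}$ as $\Ho\cala{F^\la(b^{\phantom|})\!}{}$-type expressions using the adjoints $F^\la$ and $G^\ra$ before invoking the convolution property, whereas you apply the convolution property directly to the linear functor $b\mapsto\Ho\calc c{F(b)}$ (passing to $\calaopp$ where needed); both reductions are valid.
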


\begin{proof}
(i)\, To obtain the isomorphism \eqref{homAoppC-coend} we use that $\Psile$ has $\Phile$ as 
a quasi-inverse and invoke \eqref{eq:end-nat-trans} and the (Yoneda) \delt\ property of the
dual Hom functor to compute \be
  \bearl
  \Ho{\calaopp\boxtimes\calc} {\ol a \boti c} {\Psile(F)}
  \,\cong\, \Ho{\!\Funle(\cala,\calc)} {\Phile(\ol a \boti c)} F
  \,=\, \Nat\big( \Ho\cala a- \oti c , F \big)
  \Nxl1 \dsty \hspace*{3.5em}
  \cong\, \int_{b\in\cala} \HO\calc {\Ho\cala ab \oti c} {F(b)}
  \,\cong\, \int_{b\in\cala} \Hox\cala ab \otik \Ho\calc c{F(b)}
  \,\cong\,  \Ho\calc c{F(a)} \,.
  \eear
  \ee
On the other hand, the (co-Yoneda) \delt\ property of the Hom functor implies that 
the same result can be obtained as follows:
  \be
  \int^{b\in\cala} \Ho{\calaopp\boxtimes\calc} {\ol a \boti c} {\ol b \boti F(b)}
  \,\cong \int^{b\in\cala}\! \Ho\cala ba \otik \Ho{\!\cala} {F^\la(c)}b
  \,\cong\, \Ho\calc c{F(a)} \,.
  \label{eq:last-iso}
  \ee
The claim \eqref{homAoppC-coend} now follows by inserting the definition of $\Psile$.
\\[3pt]
(ii)\,
Analogously we calculate
  \be
  \bearll
  \Ho{\!\cala^\opp_{}\boxtimes\calc} {\Psire(G)} {\ol a \boti c}
  \!\! &
  \cong\, \Ho{\Funre(\cala,\calc)} G {\Phire(\ol a \boti c)}
  \,=\, \Nat\big( G , \Hox\cala -a \oti c \big)
  \Nxl2 &\dsty
  \cong\, \int_{b\in\cala} \HO\calc {G(b)} {\Hox\cala ba \oti c}
  \,\cong\, \int_{b\in\cala} \Ho\calc {G(b)}c \otik \Hox{\!\cala} ba
  \Nxl2 &\dsty
  \cong\, \int_{\ol b\in\calaopp} \Ho\calc {G(b)}c \otik \Hox\calaopp {\ol a}{\ol b}
  \,\cong\, \Ho\calc {G(a)}c 
  \eear
  \ee
as well as (using now in addition that, being right exact, by Corollary 
\ref{cor:leftexact-etc} $G$ has a right adjoint)
  \be
  \bearll\dsty
  \int^{b\in\cala}\!\! \Ho{\calaopp\boxtimes\calc} {\ol b \boti G(b)} {\ol a \boti c}
  \!\!&\dsty
  \cong \int^{b\in\cala}\!\! \Ho\cala ab \otik \Ho{\!\cala} b{G^\ra(c)}
  \Nxl2 &
  \cong\, \Ho\cala a{G^\ra(c)} \,\cong\, \Ho\calc {G(a)}c \,.
  \eear
  \ee
Again the claim now follows by comparison, after inserting the definition of $\Psire$.
\end{proof}

The left hand side of each of the isomorphisms \eqref{homAoppC-coend} and \eqref{homAoppC-end}
defines a left exact (contravariant, respectively covariant) functor from $\calaopp\Times\calc$
to \vect, and thus, by the universal property of the Deligne product, a corresponding functor
from $\calaopp\boti\calc$ to \vect. 
It may, however, happen that the so defined functor cannot be expressed as a coend in the
same way as on the right hand side of \eqref{homAoppC-coend} and \eqref{homAoppC-end}. But
if we take the coend on the right hand side in the category of left exact functors, then we
do get such an expression. Using, as in \cite{lyub11}, the symbol $\oint$ for such coends 
in categories of left exact functors, we have

\begin{cor}\label{cor:oint}
Let $\cala$ and $\calc$ be finite linear categories. Then for any $F \iN \Funle(\cala,\calc)$
there is an isomorphism 
  \be
  \HO{\!\cala^\opp\boxtimes\calc} - {\int^{b\in\cala} \ol b \boti F(b)}
  \,\cong \oint^{b\in\cala}\!\! \Ho{\cala^\opp\boxtimes\calc} - {\ol b \boti F(b)} \,,
  \label{eq:oint1}
  \ee
in $\Funle(\cala\boti\calcopp,\vect)$, 
while for any $G \iN \Funre(\cala,\calc)$ there is an isomorphism
  \be
  \HO{\Calaopp\boxtimes\calc} {\int_{b\in\cala} \ol b \boti G(b)} -
  \,\cong \oint^{b\in\cala}\!\! \Ho{\calaopp\boxtimes\calc} {\ol b \boti G(b)} -
  \label{eq:oint2}
  \ee
in $\Funle(\calaopp\boti\calc,\vect)$.
\end{cor}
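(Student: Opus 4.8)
The plan is to verify that the left-hand side of \eqref{eq:oint1} satisfies the universal property of the coend on its right-hand side, where that coend is formed \emph{inside} the functor category $\Funle(\cala\boti\calcopp,\vect)$; the isomorphism \eqref{eq:oint2} is then obtained by the dual argument. Throughout write $L:=\HO{\calaopp\boxtimes\calc}-{\int^{b\in\cala}\ol b\boti F(b)}$ and, for $b\iN\cala$, $X_b:=\Ho{\calaopp\boxtimes\calc}-{\ol b\boti F(b)}$.

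First I would observe that both $L$ and each $X_b$ are representable, hence (by Corollary \ref{cor:leftexact-etc}) genuinely objects of $\Funle(\cala\boti\calcopp,\vect)$: viewing them as covariant functors on $\cala\boti\calcopp\eq(\calaopp\boxtimes\calc)^\opp$, the functor $X_b$ is represented by $b\boti\ol{F(b)}$ and $L$ by $\ol{\int^b\ol b\boti F(b)}$. Moreover the coend $\oint^b X_b$ exists in $\Funle(\cala\boti\calcopp,\vect)$, because by Theorem \ref{thm:triangle} this category is a finite linear category and Corollary \ref{cor:coend-exists} provides the relevant coends there; the structure morphisms $\iota_b\colon\ol b\boti F(b)\To\int^b\ol b\boti F(b)$ of the coend in $\calaopp\boxtimes\calc$ induce a dinatural family $X_\bullet\Rightarrow L$ by post-composition, which I claim is universal.

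The core of the proof is then a pair of Yoneda computations, which I would run for an arbitrary test functor $H\iN\Funle(\cala\boti\calcopp,\vect)$. On one hand, since $\oint^b X_b$ exists, \eqref{eq:coend-hom} gives $\Ho{\Funle}{\oint^b X_b}H\cong\int_b\Ho{\Funle}{X_b}H$, and by the Yoneda lemma $\Ho{\Funle}{X_b}H\cong H(b\boti\ol{F(b)})$, so this equals $\int_b H(b\boti\ol{F(b)})$. On the other hand, Yoneda gives $\Ho{\Funle}{L}H\cong H\big(\ol{\int^b\ol b\boti F(b)}\big)$; rewriting $\ol{\int^b\ol b\boti F(b)}\cong\int_b b\boti\ol{F(b)}$ by \eqref{eq:opp-coend=end} and using that the left exact functor $H$ commutes with this end by \eqref{eq:end-Fleft}, this again equals $\int_b H(b\boti\ol{F(b)})$. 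Comparing the two chains naturally in $H$, and checking that the resulting identification is precomposition with the dinatural family of the previous paragraph, the Yoneda lemma yields $L\cong\oint^b X_b$, which is \eqref{eq:oint1}.

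For \eqref{eq:oint2} I would run the dual computation over $\calaopp\boti\calc$: the dual Hom functor replaces the Hom functor, right exact test functors commute with coends by \eqref{eq:coend-Gright} in place of left exact ones commuting with ends, and the right exact half of Proposition \ref{prop:matrixelements} is used. I expect the main difficulty to be conceptual rather than computational, namely keeping clearly separated the coend $\oint$ taken in $\Funle$ from the naive pointwise coend in $\vect$: the pointwise coend of a diagram of left exact functors need not be left exact, which is precisely the phenomenon that prevents reading the plain formula \eqref{homAoppC-coend} of Proposition \ref{prop:matrixelements} as an identity of coends in $\Funle$. The one step that genuinely uses the finiteness hypotheses is commuting the test functor $H$ past the end via \eqref{eq:end-Fleft}; beyond that, the only real care needed is bookkeeping of the variances across the several Yoneda identifications and the passage $\ol{\int^b\,\cdots}\cong\int_b\cdots$.
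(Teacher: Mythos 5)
Your verification of \eqref{eq:oint1} is correct, but it takes a genuinely different route from the paper's. The paper argues indirectly: the coends on the right-hand side exist in the relevant functor categories, Proposition \ref{prop:matrixelements} shows that both sides agree on $\boxtimes$-factorized objects, and the universal property of the Deligne product for left exact functors then forces the two left exact functors to be isomorphic. You instead check the universal property of the coend inside $\Funle(\cala\boti\calcopp,\vect)$ head-on, by testing against an arbitrary left exact $H$ and running Yoneda on both sides; the only real inputs are \eqref{eq:coend-hom}, \eqref{eq:opp-coend=end} and the preservation of the end by $H$ via \eqref{eq:end-Fleft}, and Proposition \ref{prop:matrixelements} is not used at all. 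Your route has the merit of making explicit exactly where left exactness of the test objects enters, i.e.\ why the coend must be formed in $\Funle$ and cannot be taken pointwise. Your appeal to Corollary \ref{cor:coend-exists} for the existence of $\oint^{b}X_b$ is loose --- that corollary only covers coends of the special shape $\int^{m}F(m)\boti\ol m$ --- but since your universal-property computation exhibits $L$ itself as the coend, existence comes out as a byproduct and this is cosmetic.

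Where your proposal would actually fail is the dualization for \eqref{eq:oint2}. The functors occurring there are still ordinary Hom functors $\Ho{\calaopp\boxtimes\calc} {\ol b\boti G(b)} -$, not dual Hom functors, and being representable they are left exact, not right exact. The correct dual argument therefore again tests against a \emph{left} exact $H\colon\calaopp\boti\calc\To\vect$, uses the covariant Yoneda lemma to identify $\Nat\big(\Ho{\calaopp\boxtimes\calc} {\ol b\boti G(b)} - ,H\big)$ with $H(\ol b\boti G(b))$, and commutes $H$ past the end $\int_{b}\ol b\boti G(b)$ sitting in the representing object of the left-hand side --- again via \eqref{eq:end-Fleft}, not via \eqref{eq:coend-Gright}. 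With the replacements you describe (dual Hom for Hom, right exact test functors preserving coends) the computation does not close: a right exact $H$ will not commute with that end, and the representables $\Ho{\calaopp\boxtimes\calc} {\ol b\boti G(b)} -$ are not objects of $\Funre(\calaopp\boti\calc,\vect)$ in the first place. (The paper's own placement of \eqref{eq:oint2} in $\Funre(\calaopp\boti\calc,\vect)$ rather than $\Funle(\calaopp\boti\calc,\vect)$ --- which sits oddly with its definition of $\oint$ as a coend in a category of \emph{left} exact functors --- may be what misled you here.) The repair is routine, but as written the second half of your argument is wrong.
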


\begin{proof}
The coends on the right hand side of these expressions exist as objects of the relevant
functor categories. Moreover, when restricted to $\boxtimes$-factorized objects, by
Proposition \ref{prop:matrixelements} they coincide with the functors on the left hand side.
The statement thus follows by the universal property (for left exact functors)
of the Deligne product.
\end{proof}

We will frequently use Proposition \ref{prop:matrixelements} in the form that is more
directly seen in the proof, i.e. as

\begin{cor}
For $\cala$ and $\calc$ be finite linear categories, we have
  \be
  \Ho{\calaopp\boxtimes\calc} {\ol a \boti c} {\Psile(F)} \,\cong\, \Ho\calc c{F(a)}
  \quad{\rm for} ~~ F \iN \Funle(\cala,\calc)
  \label{ac,PsilF=c,Fa}
  \ee
and
  \be
  \Ho{\calaopp\boxtimes\calc} {\Psire(G)} {\ol a \boti c} \,\cong\, \Ho\calc {G(a)}c
  \quad{\rm for} ~~ G \iN \Funre(\cala,\calc) \,.
  \label{PsirG,ac=Ga,c}
  \ee
\end{cor}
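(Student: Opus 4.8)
The plan is to read both isomorphisms directly off the computations already performed in the proof of Proposition~\ref{prop:matrixelements}; each is precisely the left-hand column of the respective chain of isomorphisms there, so no new input is needed beyond reassembling those steps.

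For the first isomorphism I would begin from Theorem~\ref{thm:triangle}, which tells us that $\Phile$ and $\Psile$ are quasi-inverse equivalences. Since an equivalence induces isomorphisms on morphism spaces, applying $\Phile$ to both arguments and using $\Phile\circ\Psile\cong\id$ yields $\Ho{\calaopp\boxtimes\calc}{\ol a\boti c}{\Psile(F)}\cong\Ho{\Funle(\cala,\calc)}{\Phile(\ol a\boti c)}{F}$. Substituting the definition $\Phile(\ol a\boti c)=\Ho\cala a-\oti c$ and rewriting morphisms in $\Funle(\cala,\calc)$ as natural transformations turns the right-hand side into $\Nat(\Ho\cala a-\oti c,F)$. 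I would then express this space as an end via \eqref{eq:end-nat-trans}, rewrite the integrand with the vector-space tensoring relation \eqref{Hom(av,a'v')} so that $\Ho\cala ab$ sits in the first slot of a $\vect$-valued Hom, and finally collapse the end by the Yoneda lemma \eqref{eq:Yon} applied to the functor $b\mapsto\Ho\calc c{F(b)}$, landing at $\Ho\calc c{F(a)}$.

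For the second isomorphism I would argue symmetrically, now using that $\Phire$ and $\Psire$ are quasi-inverse (again Theorem~\ref{thm:triangle}), that $\Phire(\ol a\boti c)=\Hox\cala -a\oti c$, and that the dual Hom functor obeys its own convolution property \eqref{eq:codelta-property}. Passing through $\Nat(G,\Hox\cala -a\oti c)$, formula \eqref{eq:end-nat-trans}, and \eqref{Hom(av,a'v')}, the resulting end collapses to $\Ho\calc{G(a)}c$.

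I do not anticipate a genuine obstacle, since the adjoint equivalence, the natural-transformation end formula, the vector-space tensoring isomorphism, and both convolution (Yoneda) properties are already available; the corollary is in effect a bookkeeping extraction from the proof of Proposition~\ref{prop:matrixelements}. The one point requiring care is the variance: in the left exact case $\ol a\boti c$ appears as the source of the Hom space while in the right exact case it is the target, so one must keep the orientation of the equivalence-induced Hom isomorphism consistent and invoke the covariant versus contravariant form of Yoneda (equivalently, the Hom versus the dual Hom convolution property) correctly in each of the two computations.
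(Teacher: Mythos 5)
Your proposal is correct and follows essentially the same route as the paper: the authors state explicitly that this corollary is just the form of Proposition~\ref{prop:matrixelements} "more directly seen in the proof", and your chain (full faithfulness of $\Phile$ resp.\ $\Phire$ together with the quasi-inverse relation, then \eqref{eq:end-nat-trans}, \eqref{Hom(av,a'v')}, and the Yoneda/convolution collapse) reproduces exactly the left-hand columns of the two computations given there. Your closing remark about tracking the variance in the two cases matches the only point where the paper's two computations genuinely differ.
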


Invoking the Yoneda lemma, this determines $\Psile(F)$ and $\Psire(G)$. The same 
type of arguments gives the following result for the composition of functors,
by which coends and ends can be understood as a kind of categorified matrix multiplication:
	
\begin{cor}\label{cor:int=G'G-etc}
Let $F\colon \cala\To\calb$ and $F'\colon \calb\To\calc$ be left exact functors,
and $G\colon \cala\To\calb$ and $G'\colon \calb\To\calc$ right exact functors.
Then there are isomorphisms
  \be
  \int^{b\in\calb}\!\! \Ho{\cala^\opp\boxtimes\calb} {\ol a\boti b} {\Psile_{\!\cala,\calb}(F)}
  \otik \Ho{\calb^\opp\boxtimes\calc} {\ol b\boti c} {\Psile_{\calb,\calc}(F')}
  \cong \Ho\calc c {F'\cir F(a)}
  \label{int=F'F}
  \ee
and
  \be
  \int^{b\in\calb}\! \Ho{\cala^\opp\boxtimes\calb} {\Psire_{\!\cala,\calb}(G)} {\ol a \boti b}
  \otik \Ho{\calb^\opp\boxtimes\calc} {\Psire_{\calb,\calc}(G')} {\ol b \boti c}
  \cong \Ho\calc {G'\cir G(a)} c 
  \label{int=G'G}
  \ee
natural in $a\iN\cala$ and in $c\iN\calc$.
Analogously, there are natural isomorphisms
  \be
  \int_{b\in\calb} \Hox{\cala^\opp\boxtimes\calb} {\ol a\boti b} {\Psile_{\!\cala,\calb}(F)}
  \otik \Hox{\calb^\opp\boxtimes\calc} {\ol b\boti c} {\Psile_{\calb,\calc}(F')}
  \cong \Hox\calc c {F'\cir F(a)}
  \label{intdual=F'F}
  \ee
and
  \be
  \int_{b\in\calb} \Hox{\cala^\opp\boxtimes\calb} {\Psire_{\!\cala,\calb}(G)} {\ol a \boti b}
  \otik \Hox{\calb^\opp\boxtimes\calc} {\Psire_{\calb,\calc}(G')} {\ol b \boti c}
  \cong \Hox\calc {G'\cir G(a)} c \,.
  \label{intdual=G'G}
  \ee
~
\end{cor}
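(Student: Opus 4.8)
The plan is to prove all four isomorphisms by one and the same two-step recipe, exactly in the spirit of Proposition \ref{prop:matrixelements} and the matrix-element corollary recorded just above it. Conceptually these identities say that the (dual) matrix elements of $F'\circ F$ and $G'\circ G$ are the convolution over the intermediate object $b\iN\calb$ of the matrix elements of the two factors; so the natural strategy is: first replace each of the two tensor factors of the integrand by a morphism space of $\calb$ or $\calc$ via the established matrix-element identities, and then collapse the remaining (co)end over $b$ by the convolution property of the Hom functor \eqref{eq:delta-property}, resp.\ of the dual Hom functor \eqref{eq:codelta-property}.

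Concretely, for \eqref{int=F'F} I would insert $\Ho{\cala^\opp\boxtimes\calb}{\ol a\boti b}{\Psile(F)}\cong\Ho\calb b{F(a)}$ and $\Ho{\calb^\opp\boxtimes\calc}{\ol b\boti c}{\Psile(F')}\cong\Ho\calc c{F'(b)}$, which turns the left-hand side into $\int^{b}\Ho\calb b{F(a)}\otik\Ho\calc c{F'(b)}$. Here the first factor is the representable kernel $\Ho\calb b-$ and the second is the functor $b\mapsto\Ho\calc c{F'(b)}$, so the coend collapses by the convolution (co-Yoneda) property \eqref{eq:delta-property} evaluated at $F(a)$, giving $\Ho\calc c{F'(F(a))}$. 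Identity \eqref{int=G'G} is the same computation with the clean relation for $\Psire$, namely $\Ho{\calaopp\boxtimes\calc}{\Psire(G)}{\ol a\boti c}\cong\Ho\calc{G(a)}c$; this reduces the left-hand side to $\int^{b}\Ho\calb{G(a)}b\otik\Ho\calc{G'(b)}c$, where now the intermediate variable $b$ sits in the \emph{second} Hom-slot, so the collapse uses \eqref{eq:delta-property} in its contravariant incarnation (i.e.\ applied in $\calb^\opp$) to produce $\Ho\calc{G'(G(a))}c$.

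For the dual-Hom identities \eqref{intdual=F'F} and \eqref{intdual=G'G} I would run the identical argument with $\Ho$ replaced by $\Hox$ and coends replaced by ends. Dualizing the matrix-element identities (Definition \ref{Def:dualHom}) gives $\Hox{\cala^\opp\boxtimes\calb}{\ol a\boti b}{\Psile(F)}\cong\Hox\calb b{F(a)}$ and $\Hox{\calaopp\boxtimes\calc}{\Psire(G)}{\ol a\boti c}\cong\Hox\calc{G(a)}c$, whence the two left-hand sides become $\int_{b}\Hox\calb b{F(a)}\otik\Hox\calc c{F'(b)}$ and $\int_{b}\Hox\calb{G(a)}b\otik\Hox\calc{G'(b)}c$. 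These ends then collapse by the convolution property \eqref{eq:codelta-property} of the dual Hom functor, in its covariant form for \eqref{intdual=G'G} (variable $b$ in the second slot, $H\eq\Hox\calc{G'(-)}c$ covariant) and in its contravariant form for \eqref{intdual=F'F} (variable $b$ in the first slot, $H\eq\Hox\calc c{F'(-)}$ contravariant), yielding $\Hox\calc c{F'\circ F(a)}$ and $\Hox\calc{G'\circ G(a)}c$. Naturality in $a\iN\cala$ and $c\iN\calc$ is inherited from the naturality of every isomorphism used, and the existence of the (co)ends is guaranteed by Corollary \ref{cor:coend-exists}.

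The one point that needs genuine care — and which I expect to be the main obstacle — is the bookkeeping of variances. After substituting the matrix elements one must determine, in each of the four cases, which slot the intermediate object $b$ occupies, since this is precisely what decides whether the collapsing step is the covariant or the contravariant version of the (co-)Yoneda/convolution lemma. This matters because choosing the wrong version does not merely produce an inelegant formula: the integrand would then not match the form of \eqref{eq:delta-property}/\eqref{eq:codelta-property}, and the corresponding end or coend would instead evaluate to a morphism space involving the adjoint $F^\la$ or $G^\ra$ rather than the clean composite $F'\circ F$ or $G'\circ G$. Hence the substance of the proof lies in matching the reduced integrand against the exact shape of the convolution lemma, slot by slot, rather than in any lengthy computation.
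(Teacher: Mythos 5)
Your proof is correct and takes essentially the same route as the paper's: first substitute the matrix-element identities of Proposition \ref{prop:matrixelements} into the two tensor factors of the integrand, then collapse the remaining (co)end over $b$ by the convolution properties \eqref{eq:delta-property} and \eqref{eq:codelta-property}. The only (immaterial) difference is that the paper inserts one extra adjunction step, e.g.\ $\Ho\calc c{F'(b)} \,{\cong}\, \Ho\calb {F'^{\la}(c)}b$, before invoking the convolution lemma, whereas you apply that lemma directly to the composite functor $b \mapsto \Ho\calc c{F'(b)}$ --- which is legitimate since the convolution property assumes no exactness of the functor being convolved --- and your slot-by-slot variance bookkeeping in all four cases is exactly right.
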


\begin{proof}
Invoking the isomorphism \eqref{ac,PsilF=c,Fa},    
the coend on the left hand side of
\eqref{int=F'F} becomes $ \int^{b\in\calb} \Ho\calb  b {F(a)} \otik \Ho\calc c {F'(b)} $.
After using that the functor $F'$ has a left adjoint $F'^{\,\la}$ we can apply the \delt\
property of the Hom functor to reduce this expression to
$ \Ho\calb {F'^{\,\la}(c)} {F(a)} \,{\cong}\, \Ho\calc c {F' \cir F(a)} $.
This proves \eqref{int=F'F}. To show the isomorphism \eqref{int=G'G} we use analogously 
the result \eqref{homAoppC-end} together with the fact that $G'$ has a right adjoint.
The other two isomorphisms are shown by the same type of argument.
\end{proof}


\subsection{Adjunctions and (co)ends}

Next we establish a compatibility between taking (co)ends and adjunctions.

\begin{Lemma}\label{lem:adj4(co)ends}
Let $\cala$ and $\calc$ be finite linear categories. For $F \iN \Funle(\cala,\calc)$
and $G \iN \Funre(\cala,\calc)$ there are isomorphisms
  \be
  \Psile(F) \equiv \int^{a\in\cala}\! \ol a \boti F(a)
  \,\cong\, \int^{c\in\calc}\! \ol{F^\la(c)} \boti c
  \label{eq:coend-vs-Fl}
  \ee
and
  \be
  \Psire(G) \equiv \int_{a\in\cala}\! \ol a \boti G(a)
  \,\cong\, \int_{c\in\calc}\! \ol{G^\ra(c)} \boti c 
  \label{eq:end-vs-Gr}
  \ee
of coends and of ends in $\calaopp \boti \calc$, respectively.  
\\
Moreover, for any two left exact functors $F_1\colon \cala \To \calb$ and 
$F_2\colon \calb \To \calc$ the two isomorphisms 
  \be
  \int^{a\in\cala}\! \ol a \boxtimes F_2 \cir F_1(a)
  \, \xrightarrow{~\simeq~} \int^{c\in\calc}\! \ol{F_1^\la {\circ}\,F_2^{\la}(c)} \boti c
  \ee
resulting from the isomorphism \eqref{eq:coend-vs-Fl} are equal.
Analogously also the isomorphism in \eqref{eq:end-vs-Gr} is coherent in this sense. 
\end{Lemma}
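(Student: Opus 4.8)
The plan is to isolate the single ingredient in which the composite structure of $F_2\cir F_1$ genuinely enters, and then to check that every other step is literally identical for the two isomorphisms. First I would record the construction of the isomorphism \eqref{eq:coend-vs-Fl}: for a left exact $F\colon\cala\To\calc$ with left adjoint $F^\la$ it is assembled from three canonical steps. Step (a) inserts the density (co-Yoneda) formula \eqref{eq:delta-property} in $\calc$ to rewrite $\int^{a}\ol a\boti F(a)$ as the double coend $\int^{a}\!\int^{c}\Ho\calc c{F(a)}\oti(\ol a\boti c)$; step (b) applies, inside the integrand, the adjunction bijection $\Ho\calc c{F(a)}\cong\Ho\cala{F^\la(c)}a$; and step (c) collapses the $\int^{a}$ by the density formula in $\cala$, using $\int^{a}\Ho\cala {F^\la(c)}a\oti\ol a\cong\ol{F^\la(c)}$, to land on $\int^{c}\ol{F^\la(c)}\boti c$. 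Crucially, step (a) depends on $F$ only through the object $F(a)$ and step (c) only through the object $F^\la(c)$, so neither sees how $F$ is built.

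Next I would apply this to $F:=F_2\cir F_1$, whose left adjoint is $(F_2\cir F_1)^\la=F_1^\la\cir F_2^\la$. Since in both Way~1 and Way~2 the relevant objects are the same ($F_2F_1(a)$ in step (a) and $F_1^\la F_2^\la(c)$ in step (c)), the two isomorphisms share the scaffold (a)--(c) verbatim and differ \emph{only} in the form used for the adjunction bijection in step (b). The direct isomorphism uses the composite-adjunction bijection $\Ho\calc c{F_2 F_1(a)}\cong\Ho\cala{F_1^\la F_2^\la(c)}a$ in one stroke, whereas the two-step isomorphism factors it through $\calb$ as $\Ho\calc c{F_2 F_1(a)}\cong\Ho\calb{F_2^\la(c)}{F_1(a)}\cong\Ho\cala{F_1^\la F_2^\la(c)}a$, invoking the adjunction bijections of $F_2$ and then of $F_1$. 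Because steps (a), (c) and the surrounding Yoneda identifications are natural in $a$ and $c$ and dinatural under the coends, the universal property of the coends lets me transfer the comparison of the two coend isomorphisms down to the comparison of these two Hom-space bijections.

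The heart of the matter is therefore the assertion that the adjunction isomorphism of the composite adjunction $F_1^\la\cir F_2^\la\dashv F_2\cir F_1$ is precisely the composite of the adjunction isomorphisms of $F_2^\la\dashv F_2$ and $F_1^\la\dashv F_1$. This is the standard coherence of adjoint composition (equivalently, the unit and counit of the composite adjunction are the usual composites of the individual units and counits), and it identifies the one-step and two-step bijections of step (b). Hence the two isomorphisms coincide. I expect the reduction itself, rather than this final invocation, to be the only delicate point: the main obstacle is the bookkeeping needed to certify that the density isomorphisms and the passage between a coend isomorphism and its integrand-level description are genuinely natural and dinatural, so that Yoneda legitimately moves the comparison onto Hom-spaces; once there, the comparison is the cited adjunction coherence (and it dovetails with the reading of composition as a coend from Corollary~\ref{cor:int=G'G-etc}).

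Finally, the coherence of \eqref{eq:end-vs-Gr} follows by the dual argument: replace coends by ends, left exact functors and left adjoints by right exact functors and right adjoints, the density formula \eqref{eq:delta-property} by its dual \eqref{eq:codelta-property}, and the bijection in step (b) by $\Ho\cala a{G^\ra(c)}\cong\Ho\calc{G(a)}c$. The sole ingredient is again the coherence of composite adjunctions, now for the composite right adjoint $(G_2\cir G_1)^\ra=G_1^\ra\cir G_2^\ra$, so the identical reduction applies.
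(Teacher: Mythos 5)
Your construction is correct, but it takes a genuinely different route from the paper. The paper's proof never manipulates the coend directly: it applies the equivalence $\Phile$ to both sides of \eqref{eq:coend-vs-Fl}, observes that the left side becomes $\Phile\cir\Psile(F)\,{\cong}\,F$ while the right side becomes $\int^{c}\Ho\cala{F^\la(c)}-\oti c\,{\cong}\,\int^{c}\Ho\calc c{F(-)}\oti c\,{\cong}\,F$ by the adjunction and the convolution property \eqref{eq:delta-property}, and then concludes from full faithfulness of $\Phile$; the coherence statement is merely asserted to follow from this definition. You instead stay inside $\calaopp\boti\calc$ and perform an explicit double-coend computation (co-Yoneda in $\calc$, adjunction in the integrand, co-Yoneda in $\cala$). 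What the paper's route buys is that all existence and interchange questions are absorbed into the already-established Theorem \ref{thm:triangle}; what your route buys is independence from that theorem and, more importantly, a genuinely explicit proof of the coherence claim, which you correctly reduce to the fact that the hom-isomorphism of the composite adjunction $F_1^\la\cir F_2^\la\dashv F_2\cir F_1$ is the composite of the individual ones --- this is exactly the content hiding behind the paper's one-line remark. The one debt you should discharge explicitly is the Fubini interchange between $\int^{a}\!\int^{c}$ and $\int^{c}\!\int^{a}$ for the integrand $\Ho\cala{F^\la(c)}a\oti(\ol a\boti c)$: both iterated coends exist because each inner coend is computed pointwise by co-Yoneda (using exactness of $\ol a\boti-$ and $-\boti c$), so the double coend exists and the interchange is legitimate; in this non-semisimple setting such existence checks cannot be waved away (compare the $\oint$ bookkeeping in Corollary \ref{cor:oint} and Lemma \ref{lemma:Del-end}).
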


\begin{proof}
Applying the functor $\Phile$ to the left hand side of \eqref{eq:coend-vs-Fl} gives
$\Phile \cir \Psile(F) \,{\cong}\,F$. The same functor is obtained on the right hand side:
using that $\Phile$, being an equivalence, is right exact and thus preserves the coend, we have
  \be
  \bearll
  \Phile(\int^{c\in\calc} \ol{F^\la(c)} \boti c) \!\!&\dsty
  \cong \int^{c\in\calc}\! \Phile(\ol{F^\la(c)} \boti c)
  \,= \int^{c\in\calc}\!\!\! \Ho\cala{F^\la(c)}- \oti c
  \Nxl2&\dsty
  \cong \int^{c\in\calc}\!\! \Ho\calc c {F(-)} \oti c 
  \stackrel{\eqref{eq:delta-property}}\cong F \,.
  \eear
  \ee
The claim thus follows directly from the fact that $\Phile$ is an equivalence.
The isomorphism \eqref{eq:end-vs-Gr} is analogously obtained from the isomorphism
  \be
  \bearll
  \Phire(\int_{c\in\calc} \ol{G^\ra(c)} \boti c) \!\!&\dsty
  \cong \int_{c\in\calc}\! \Phire(\ol{G^\ra(c)} \boti c)
  \,= \int_{c\in\calc} \Hox\calc -{G^\ra(c)} \oti c
  \Nxl2&\dsty
  \cong \int_{c\in\calc} \Hox\calc {G(-)}c \oti c 
  \stackrel{\eqref{eq:codelta-property}}\cong G
  \eear
  \ee
of functors.
The coherence of these isomorphisms with respect to the composition of functors follows 
directly from the definition of the isomorphisms above. 
\end{proof}

For a right exact functor $G$, the functor $\Gammalr(G)$ is left exact and thus has a left 
adjoint, and analogously for a left exact functor $F$. 
Using adjoints and the triangle \eqref{tria:AoppB-lex-rex} of equivalences one obtains
two potentially different equivalences from $\Funre(\cala,\calb)$ to $\Funre(\calb,\cala)$,
namely mapping a right exact functor $G$ either to $\Gammarl(G^{\ra})$ or to
$(\Gammalr(G))^{\la}$. The next result, which uses Lemma \ref{lem:adj4(co)ends}, shows
that these two functors in fact coincide, and likewise do the analogous equivalences of
left exact functors.

\begin{cor}\label{cor:GammarlGra}
For any functor $G \iN \Funre(\cala,\calb)$ the functor $\Gammarl(G^\ra) \iN \Funre(\calb,\cala)$
is left adjoint to the functor $\Gammalr(G) \iN \Funle(\cala,\calb)$,
and for any $F \iN \Funle(\cala,\calb)$ the functor $\Gammalr(F^\la) \iN \Funle(\calb,\cala)$
is right adjoint to $\Gammarl(F) \iN \Funre(\cala,\calb)$.
\end{cor}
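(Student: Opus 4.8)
The plan is to establish both adjunctions by producing natural isomorphisms of the relevant morphism spaces and then invoking the Yoneda lemma; I will treat the first assertion in detail, the second being entirely analogous. Concretely, I want to exhibit an isomorphism $\Ho\cala{\Gammarl(G^\ra)(b)}{a}\cong\Ho\calb{b}{\Gammalr(G)(a)}$, natural in $a\iN\cala$ and $b\iN\calb$; by uniqueness of adjoints this is exactly the assertion that $\Gammarl(G^\ra)\dashv\Gammalr(G)$.

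First I would rewrite each side as a morphism space in a Deligne product, using the reformulated version of Proposition~\ref{prop:matrixelements}, i.e.\ $\Ho{\calaopp\boxtimes\calb}{\ol a\boti b}{\Psile(F)}\cong\Ho\calb{b}{F(a)}$ for $F\iN\Funle(\cala,\calb)$ and $\Ho{\calaopp\boxtimes\calb}{\Psire(G)}{\ol a\boti b}\cong\Ho\calb{G(a)}{b}$ for $G\iN\Funre(\cala,\calb)$. Since $\Gammalr(G)\iN\Funle(\cala,\calb)$ this gives $\Ho\calb{b}{\Gammalr(G)(a)}\cong\Ho{\calaopp\boxtimes\calb}{\ol a\boti b}{\Psile(\Gammalr(G))}$, and Lemma~\ref{lem:Psile-as-end} identifies $\Psile(\Gammalr(G))\cong\Psire(G)$. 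Likewise, since $\Gammarl(G^\ra)\iN\Funre(\calb,\cala)$, the same reformulation yields $\Ho\cala{\Gammarl(G^\ra)(b)}{a}\cong\Ho{\calbopp\boxtimes\cala}{\Psire(\Gammarl(G^\ra))}{\ol b\boti a}$, and Lemma~\ref{lem:Psile-as-end} again identifies $\Psire(\Gammarl(G^\ra))\cong\Psile(G^\ra)$. Hence it remains to identify the two spaces $\Ho{\calbopp\boxtimes\cala}{\Psile(G^\ra)}{\ol b\boti a}$ and $\Ho{\calaopp\boxtimes\calb}{\ol a\boti b}{\Psire(G)}$.

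The decisive input is Lemma~\ref{lem:adj4(co)ends}. Since $G$ is left adjoint to $G^\ra$, we have $(G^\ra)^\la\cong G$, so the isomorphism \eqref{eq:coend-vs-Fl} rewrites $\Psile(G^\ra)\cong\int^{x\in\cala}\ol{G(x)}\boti x$. Using that the Hom functor turns the coend in its first argument into an end (Lemma~\ref{lem:basic-coend-pres}) and that the morphism space of $\boxtimes$-factorized objects factorizes, I would compute $\Ho{\calbopp\boxtimes\cala}{\Psile(G^\ra)}{\ol b\boti a}\cong\int_{x}\Ho\calb{b}{G(x)}\otik\Ho\cala{x}{a}$. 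On the other side, inserting the definition $\Psire(G)=\int_{x\in\cala}\ol x\boti G(x)$ and this time using that Hom commutes with the end in its second argument, I obtain $\Ho{\calaopp\boxtimes\calb}{\ol a\boti b}{\Psire(G)}\cong\int_{x}\Ho\cala{x}{a}\otik\Ho\calb{b}{G(x)}$, the same end. The whole chain is natural in $a$ and $b$, so it furnishes the desired adjunction.

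The second statement is proved by the mirror computation: for $F\iN\Funle(\cala,\calb)$ one reduces $\Ho\calb{\Gammarl(F)(a)}{b}\cong\Ho\cala{a}{\Gammalr(F^\la)(b)}$ to the comparison of $\Ho{\calaopp\boxtimes\calb}{\Psile(F)}{\ol a\boti b}$ with $\Ho{\calbopp\boxtimes\cala}{\ol b\boti a}{\Psire(F^\la)}$, where now \eqref{eq:end-vs-Gr} together with $(F^\la)^\ra\cong F$ rewrites $\Psire(F^\la)\cong\int_{x}\ol{F(x)}\boti x$; both spaces collapse to $\int_{x}\Ho\cala{a}{x}\otik\Ho\calb{F(x)}{b}$. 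I expect the only genuine obstacle to be the bookkeeping: keeping the variances straight under the passage to opposite categories, matching the coend over the domain of $G^\ra$ with the coend over its codomain through the adjoint, and making sure the Hom/(co)end interchange is applied in the correct slot in each of the two computations. Once this is set up, the remaining manipulations are the routine factorization of Deligne-product morphism spaces.
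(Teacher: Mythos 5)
Your proof is correct and follows essentially the same route as the paper's: both arguments verify the adjunction by exhibiting a natural isomorphism $\Ho\cala{\Gammarl(G^\ra)(b)}{a}\,{\cong}\,\Ho\calb{b}{\Gammalr(G)(a)}$ through the common end $\int_{x\in\cala}\Ho\cala xa\otik\Ho\calb b{G(x)}$, with Lemma \ref{lem:adj4(co)ends} as the decisive input. The only difference is bookkeeping: you route the computation through $\Psile$, $\Psire$ and Lemma \ref{lem:Psile-as-end} in the Deligne product, whereas the paper expands $\Gammalr(G)(a)$ and $\Gammarl(G^\ra)(b)$ directly via \eqref{eq:Gammarl,Gammalr}.
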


\begin{proof}
The statements follow by direct computation. We have
  \be
  \bearll
  \Ho\calb b {\Gammalr(G)(a)} \!\! &
  =\, \HO\calb b {\int_{c\in\cala}\, \Ho\cala ca \oti G(c)}
  \dsty
  \cong \int_{c\in\cala} \Ho\cala ca \otik \Ho\calb b {G(c)}
  \Nxl3&
  \cong\, \HO\Cala {\int^{c\in\cala} \Hox\calb b {G(c)} \oti c} a
  \Nxl3&
  \!\!\!\stackrel{\eqref{eq:coend-vs-Fl}}\cong\,
  \HO\Cala {\int^{d\in\calb} \Hox\calb bd \oti G^\ra(d)} a
  \,\cong\, \Ho\cala {\Gammarl(G^\ra)(b)} a
  \eear
  \label{eq:adj4Gamma()}
  \ee
for all $a \iN \cala$ and $b \iN \calb$;
this shows the first of the two claimed adjunctions. The second adjunction can be
seen analogously, or alternatively by re-interpreting \eqref{eq:adj4Gamma()}, with
$G^\ra \iN \Funre(\calb,\cala)$ playing the role of $F$.
\end{proof}


\subsection{Coends over Deligne products}

In the sequel we consider coends taken over a Deligne product of finite linear categories. 
Recall that for $\cala$ and $\calb$ finite linear categories, the functor categories 
$\Funle(\cala,\calb)$ and $\Funre(\cala,\calb)$ are finite linear as well. Also recall the 
notation $\oint^{\scriptscriptstyle\bullet}$ for coends in categories of left exact functors
which was introduced in Corollary \ref{cor:oint}; analogously we write
$\oint_{\!\scriptscriptstyle\bullet}$ for an end taken in a category of right exact functors.
Our result is based on the following statement on the coend of the Hom functor of a 
Deligne product.

\begin{Lemma}\label{lem:Deligne1}
For $\calc \eq \cala \boti \calb$ a Deligne product of finite linear categories there are 
isomorphisms 
  \be
  \int^{c \in \calc}\! \Ho{\calc^\opp\boti\calc} {\ol y \boti x} {\ol c \boti c}
  \,\cong\, \Ho\calc xy 
  \,\cong \oint^{a \in \cala} \!\!\! \int^{b \in \calb}\! \Ho{\calc^\opp\boti\calc} 
  {\ol y \boti x} {\ol{a \boti b} \,\boti\, a \boti b}
  \label{eq:2}
  \ee
and
  \be
  \int_{c \in \calc}\, \Hox{\calc^\opp\boti\calc} {\ol y \boti x} {\ol c \boti c}
  \,\cong\, \Hox\calc xy \,\cong 
  \oint_{a \in \cala} \! \int_{b \in \calb}\, \Hox{\calcopp\boxtimes\calc} 
  {\ol y \boti x} {\ol{a \boti b} \,\boti\, a \boti b}
  \label{eq:2e}
  \ee
natural in $x \iN \calc$ and  $\ol y \iN \calcopp$.
\end{Lemma}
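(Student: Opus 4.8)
The plan is to establish each of \eqref{eq:2} and \eqref{eq:2e} in two independent steps: the left isomorphism by the convolution property of the (dual) Hom functor, and the right isomorphism by reducing the (co)end over the Deligne product $\calc\eq\cala\boti\calb$ to iterated (co)ends over the two factors. The common starting point is the factorization of the Hom functor of a Deligne product on $\boxtimes$-factorized objects, which together with the identity $\Ho\calcopp{\ol y}{\ol c}\eq\Ho\calc cy$ gives
\[
\Ho{\calcopp\boti\calc}{\ol y\boti x}{\ol c\boti c}\,\cong\,\Ho\calc cy\otik\Ho\calc xc\,.
\]
For the left isomorphism in \eqref{eq:2} I would then compute $\int^{c\in\calc}\Ho\calc cy\otik\Ho\calc xc\cong\Ho\calc xy$ directly from the convolution property \eqref{eq:delta-property}, taking $F\eq\Ho\calc x-$ and evaluating the resulting functor at $y$. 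The left isomorphism in \eqref{eq:2e} follows the same pattern from the analogous factorization of the dual Hom functor and its convolution property \eqref{eq:codelta-property}, with ends replacing coends.

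For the right isomorphism in \eqref{eq:2} I would first specialize the factorization to $c\eq a\boti b$ and evaluate the inner coend for fixed $a$. Since the Deligne product is exact in each variable, the functor $\iota_a\,{:=}\,a\boti-\colon\calb\To\calc$ is exact and hence, by Corollary \ref{cor:leftexact-etc}, admits a left adjoint $\iota_a^\la$ and a right adjoint $\iota_a^\ra$. The adjunction isomorphisms $\Ho\calc x{a\boti b}\cong\Ho\calb{\iota_a^\la x}b$ and $\Ho\calc{a\boti b}y\cong\Ho\calb b{\iota_a^\ra y}$ turn the inner integrand into co-Yoneda form, so that \eqref{eq:delta-property} yields
\[
\int^{b\in\calb}\!\Ho\calc{a\boti b}y\otik\Ho\calc x{a\boti b}\,\cong\,\int^{b\in\calb}\!\Ho\calb b{\iota_a^\ra y}\otik\Ho\calb{\iota_a^\la x}b\,\cong\,\Ho\calb{\iota_a^\la x}{\iota_a^\ra y}\,.
\]
This reduces the claim to identifying $\oint^{a\in\cala}\Ho\calb{\iota_a^\la x}{\iota_a^\ra y}$ with $\Ho\calc xy$.

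To finish, I would argue that both sides are left exact functors of $\ol y\boti x\iN\calcopp\boti\calc$ --- for the left-hand side this is precisely the effect of taking the outer coend in the category of left exact functors, which is what the symbol $\oint$ encodes (cf.\ Corollary \ref{cor:oint}) --- so that by the universal property of the Deligne product for left exact functors it suffices to verify equality on $\boxtimes$-factorized objects $x\eq a'\boti b'$ and $y\eq a''\boti b''$. There one computes $\iota_a^\la(a'\boti b')\cong b'\oti(\Ho\cala{a'}a)^\wee$ and $\iota_a^\ra(a''\boti b'')\cong b''\oti\Ho\cala a{a''}$, whence the inner result becomes $\Ho\cala{a'}a\otik\Ho\cala a{a''}\otik\Ho\calb{b'}{b''}$; the outer coend is now of co-Yoneda type in $\cala$ and returns $\Ho\cala{a'}{a''}\otik\Ho\calb{b'}{b''}\eq\Ho\calc xy$, as required. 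Naturality in $x$ and $\ol y$ is inherited from the naturality of all the isomorphisms used. The dual-Hom statement \eqref{eq:2e} is proved by the same two steps with ends throughout, the convolution property \eqref{eq:codelta-property}, and the outer end $\oint_{a\in\cala}$ taken in the category of right exact functors.

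The hard part will be the very last reduction, and it is also what forces the outer integral to be a (co)end in a functor category rather than a naive one in \vect: the $\boxtimes$-factorized objects $a\boti b$ do not exhaust $\calc$, so for general $x,y$ the plain iterated coend need not reproduce the coend over all of $\calc$. It is only after passing to the category of left exact (resp.\ right exact) functors, where factorized objects become effectively dense through the universal property of the Deligne product, that the reduction is valid; making this passage precise --- i.e.\ checking that $\oint$ repairs the defect of the plain coend --- is the crux of the proof.
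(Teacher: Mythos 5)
Your handling of the first isomorphism in each of \eqref{eq:2} and \eqref{eq:2e} is correct and is the same as the paper's (factorize the Hom functor of the Deligne product and apply the convolution property). For the second isomorphism you take a genuinely different, more abstract route: the paper instead chooses algebras with $\cala\,{\simeq}\,A\Mod$ and $\calb\,{\simeq}\,\moD B$ and runs an explicit chain of bimodule adjunctions terminating in the Peter--Weyl isomorphism $\int_{m}m\otik m^\wee\,{\cong}\,A$. Your reduction of the inner coend over $\calb$ to $\Ho\calb{\iota_a^\la x}{\iota_a^\ra y}$ via the adjoints of the exact functor $\iota_a\,{=}\,a\boti{-}$ is fine as far as it goes.

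The gap is in the last step, exactly where you locate ``the crux''. You propose to identify $\oint^{a\in\cala}G_a$, with $G_a(\ol y,x)\,{=}\,\Ho\calb{\iota_a^\la x}{\iota_a^\ra y}$, with $\Ho\calc xy$ by checking on $\boxtimes$-factorized $x,y$, and at such objects you evaluate the \emph{plain} coend $\int^{a}\Ho\cala{a'}a\otik\Ho\calb{b'}{b''}\otik\Ho\cala a{a''}$ by co-Yoneda. But a coend taken in the category of left exact functors is not computed pointwise --- that is precisely why the symbol $\oint$ is needed, as you yourself observe --- so there is no a priori identification of the value of $\oint^{a}G_a$ at a factorized object with the plain coend $\int^{a}G_a(\ol y,x)$ there; the canonical comparison map from the plain coend need not be an isomorphism at any object, factorized or not. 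In Corollary \ref{cor:oint} the analogous identification is legitimate only because the integrand there is the representable functor $\Ho{}{-}{\ol b\boti F(b)}$ with explicitly known representing object: the equivalence $\Funle(-,\vect)\,{\simeq}\,(-)^\opp$ transports the $\oint$ to an ordinary coend of representing objects, whose Hom-values at factorized objects are then controlled by Proposition \ref{prop:matrixelements}.

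To close the gap along your lines one would have to exhibit the representing object of each $G_a$ --- it is $(\iota_a^\opp\boti\iota_a)\big(\!\int^{b\in\calb}\ol b\boti b\big)$ --- and then prove that the ordinary coend over $a\iN\cala$ of these objects is $\int^{c\in\calc}\ol c\boti c$. That, however, is essentially the isomorphism \eqref{eq:coend-C}, whose proof in the paper depends on the present lemma, so the route as proposed is circular unless one supplies the same kind of concrete input that the paper's chain \eqref{eq:6} provides. Relatedly, the existence of the $\oint$ as an object of the left exact functor category is part of what must be proven (the paper's computation establishes existence and the identification simultaneously) and is not addressed by appealing to the notation.
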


\begin{proof}
The first of the two isomorphisms \eqref{eq:2} follows from the \delt\ property 
\eqref{eq:delta-property} of the Hom functor after noting that $\Ho{\calcopp\boxtimes\calc}
{\ol x \boti y} {\ol c \boti c} \,{\cong}\, \Ho{\calc}{y}{c} \otik \Ho{\calc}{c}{x}$. 
To establish the second isomorphism, we choose \findim\ algebras $A$ and $B$ with equivalences 
$\cala \,{\simeq}\, A\Mod$ and $\calb \,{\simeq}\, \moD B$. Then we also have an equivalence
$\cala \boti \calb \,{\simeq}\, A \BimoD B$, and the right hand side of formula \eqref{eq:2}
gives the expression $\oint^{m \in A \Mod}\!\! \int^{n \in \moD B} \Hom_{A|B}(x,m \otik n) \otik 
\Hom_{A|B}(m \otik n, y)$ for $x\eq {}_Ax_B,$ $y \eq {}_Ay_B \iN A \BimoD B$, 
$m \eq {}_Am \iN A\Mod$ and $n \eq n_B \iN \moD B$. The existence of these coends as well 
as the equivalence to the Hom functor is shown by the following chain of isomorphisms:
  \be
  \hspace*{-.5em}
  \bearll \multicolumn{2}{l} {\dsty
  \oint^{m \in A\Mod}\!\!\! \int^{n \in\moD B}\!\!\!
  \Ho{A|B} x {m \otik n} \otik \Ho{A|B} {m \otik n} y }
  \Nxl2 \quad~ & \dsty
  \cong \oint^{m\in A\Mod}\!\!\! \int^{n\in\moD B}\!\!\!
  \Ho B {m^* \otA x} n \otik \Ho B n {(y^* \otA m)^{*}}
  \Nxl2 &\dsty
  \cong \oint^{m \in A\Mod}\!\!\! \Ho B {m^* \otA x} {(y^* \otA m)^*} 
  \,\cong \oint^{m \in A\Mod}\!\!\! \Ho{A|B} {(m \otik m^*) \otA x} y 
  \Nxl1 &\dsty 
  \cong \oint^{m \in A\Mod}\!\!\! \Ho{A|A} {m \otik m^* \otA (x \otB  y^*)} A 
  \Nxl2 &
  \!\!\! \stackrel{\eqref{eq:oint2}} \cong\!\!\!
  \Ho{A|A} {\int_{m \in A\Mod} m \otik (m^* \otA (x \otB  y^*))} A 
  \,\cong \Ho{A|A} {A \otA x \otB y^*} A \,\cong \Ho{A|B} x y \,.
  \label{eq:6}
  \eear
  \ee
Here in the first, third and fourth step we apply explicit formulas for the adjunctions inside 
the respective Hom spaces, the second step is the convolution property of the Hom functor
in $\moD B$, and the fifth step uses the commutation of the Hom functor with the coend 
according to Corollary \ref{cor:oint} 
for the right exact functor ${-}\otA x \otB y^* \colon \moD A \To \moD A$.
 \\
This proves \eqref{eq:2}. The first isomorphism in \eqref{eq:2e} is a direct consequence of 
the \delt\ property \eqref{eq:codelta-property} of the dual Hom functor, while the second
isomorphism in \eqref{eq:2e} follows from \eqref{eq:2} by applying the duality functor and
noting that dualizing turns a left exact coend into a right exact end.
\end{proof}

Using these results we obtain

\begin{Lemma} \label{lemma:Del-end}
Let $\calc \eq \cala \boti \calb$ and $\calx$ be finite linear categories. For
$F\colon \calcopp \Times \calc \To \calx$ a left exact bilinear
functor whose coend exists one has
  \be
  \int^{c\in\calc}\!\! F(\ol c,c) \,\cong 
  \int^{a\in\cala}\!\! \oint^{b\in\calb}\!\! F(\ol a\boti \ol b,a\boti b) \,.
  \label{eq:coend-boxtimes}
  \ee
Analogously, if the end of a right exact bilinear functor
$G\colon \calcopp \Times \calc \To \calx$ exists, then one has 
 \be
  \int_{c\in\calc} G(\ol c,c) \,\cong 
  \int_{a\in\cala}\, \oint_{b\in\calb} G(\ol a\boti \ol b,a\boti b) \,.
  \label{eq:end-boxtimes}
  \ee
\end{Lemma}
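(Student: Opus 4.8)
The plan is to reduce the coend statement \eqref{eq:coend-boxtimes} to the Deligne-product Fubini for the Hom functor established in Lemma~\ref{lem:Deligne1}, using the convolution property \eqref{eq:delta-property} of the Hom functor as the bridge between a general left exact bifunctor and the Hom functor itself. I will carry out the coend case in detail; the end case \eqref{eq:end-boxtimes} then follows by the dual argument, replacing \eqref{eq:delta-property} by \eqref{eq:codelta-property} and the first isomorphism of \eqref{eq:2} by its $\Hox$-counterpart \eqref{eq:2e}, or equivalently by transporting \eqref{eq:coend-boxtimes} to opposite categories via \eqref{eq:opp-coend=end}.

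First I would rewrite the coend of $F$ as a Hom-weighted coend of its values. A double application of the convolution property \eqref{eq:delta-property}, once in each variable of $F$, gives a natural isomorphism
\[
\int^{c\in\calc}\!\! F(\ol c,c)\,\cong\int^{z\in\calc}\!\!\int^{y\in\calc}\Ho\calc y z\oti F(\ol z,y)\,,
\]
which one verifies by collapsing the $y$-coend (Yoneda) back to $\int^{z}F(\ol z,z)$. The entire dependence on the Deligne structure of $\calc$ is now isolated in the single Hom-coend $\Ho\calc y z$, taken over all of $\calc$.

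Next I would feed in the Deligne decomposition. Substituting for $\Ho\calc y z$ the second isomorphism of \eqref{eq:2}, in the form $\Ho\calc y z\cong\oint^{a\in\cala}\int^{b\in\calb}\Ho\calc{a\boti b}z\otik\Ho\calc y{a\boti b}$, replaces the Hom-coend over $\calc$ by an iterated (co)end over $\cala$ and $\calb$ restricted to $\boxtimes$-factorized objects. After commuting this iterated coend past the two remaining $\calc$-coends and the tensoring with $F(\ol z,y)$, the two $\calc$-coends collapse by \eqref{eq:delta-property}: the $z$-coend $\int^{z}\Ho\calc{a\boti b}z\oti F(\ol z,y)$ (viewed over $\calcopp$) evaluates to $F(\ol{a\boti b},y)$, and then the $y$-coend $\int^{y}\Ho\calc y{a\boti b}\oti F(\ol{a\boti b},y)$ evaluates to $F(\ol{a\boti b},a\boti b)$. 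What remains is exactly an iterated coend over the two factors of $\calc\eq\cala\boti\calb$, evaluated on $\boxtimes$-factorized objects, as in \eqref{eq:coend-boxtimes}.

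The step I expect to be the main obstacle is precisely this interchange: the coend supplied by \eqref{eq:2} is taken in a category of left exact functors (the symbol $\oint$), so I must justify that commuting it past the ordinary $\calc$-coends and past the $\vect$-tensoring by $F(\ol z,y)$ is legitimate. Here the hypothesis that $\int^{c}F(\ol c,c)$ exists, together with the fact that tensoring by a fixed \findim\ vector space and forming coends are cocontinuous operations, is what lets the relevant Fubini interchanges be performed inside the functor category and the $\oint$ be pulled outside. Finally, the precise nesting and the assignment of the symbol $\oint$ to one factor and the ordinary $\int$ to the other is forced by the corresponding placement in Lemma~\ref{lem:Deligne1}; since all three iterated expressions compute the same honest coend $\int^{c}F(\ol c,c)$, the symmetric roles of $\cala$ and $\calb$ in $\cala\boti\calb$ then account for the shape of \eqref{eq:coend-boxtimes} as stated.
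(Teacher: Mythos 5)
Your argument is correct and follows essentially the same route as the paper's proof: both reduce \eqref{eq:coend-boxtimes} to the Hom-coend decomposition of Lemma~\ref{lem:Deligne1} by inserting the convolution property \eqref{eq:delta-property}, interchanging the resulting (left exact) coends via Fubini, and collapsing back with co-Yoneda. Your version merely streamlines the bookkeeping by introducing a single Hom factor $\Ho\calc yz$ where the paper first forms a diagonal coend of $\Ho{\calcopp\boxtimes\calc}{\ol c\boti d}{\ol x\boti x}$, and your closing remark on the Fubini/$\oint$ interchange is exactly the point the paper settles by citing the Fubini theorem for left exact coends.
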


\begin{proof}
By the \delt\ property of the Hom functor we have
 \be
  \bearll
  F(\ol x,y) \!\!&\dsty
  \cong  \int^{\ol c \in \calcopp}\!\! \Ho{\calc^\opp} {\ol c} {\ol x} \otik F(\ol c,y)
  \,\cong \int^{\ol c \in \calcopp}\!\!\!\! \int^{d \in \calc}\!\!
  \Ho{\calc^\opp} {\ol c} {\ol x} \otik \Ho{\calc} d y \oti F(\ol c,d)
  \Nxl2 &\dsty 
  \cong \int^{\ol c \in \calcopp}\!\!\! \int^{d \in \calc}\!\!
  \Ho{\calc^\opp \boti \calc} {\ol c \boti d} {\ol x \boti y} \oti F(\ol c,d)
  \Nxl2 &\dsty 
  \cong \oint^{\ol c \in \calcopp}\!\!\! \oint^{d \in \calc}\!\!
  \Ho{\calc^\opp \boti \calc} {\ol c \boti d} {\ol x \boti y} \oti F(\ol c,d) \,.
  \eear
  \ee
Here the last isomorphism holds because the respective functors are already left exact.
The isomorphism \eqref{eq:coend-boxtimes} then follows by the calculation
  \be
  \bearll\dsty
  \int^{x \in \calc}\!\! F(\ol x,x) \!\!&\dsty
  \cong \oint^{x \in \calc}\!\!\! \oint^{\ol c \in \calcopp}\!\!\!\! \oint^{d \in \calc}\!
  \Ho{\calc^\opp \boti \calc} {\ol c \boti d} {\ol x \boti x} \oti F(\ol c,d)
  \Nxl2 &\dsty
  \cong \oint^{\ol c \in \calcopp}\!\!\!\! \oint^{d \in \calc}\!\!\! \oint^{x \in \calc}\!
  \Ho{\calc^\opp \boti \calc} {\ol c \boti d} {\ol x \boti x}   \oti F(\ol c,d)
  \Nxl2 &\dsty
  \!\!\stackrel{\eqref{eq:2}} \cong\!
  \oint^{\ol c \in \calcopp}\!\!\!\! \oint^{d \in \calc}\!\!\! \oint^{a \in \cala} \!\!\!
  \oint^{b \in \calb}\!\!
  \Ho{\calcopp\boxtimes\calc} {\ol c \boti d} {\ol{a\boti b} \boti a\boti b} \oti F(\ol c,d)
  \Nxl2 &\dsty
  \cong \oint^{a \in \cala}\!\!\! \oint^{b \in \calb}\!\!\!
  \oint^{\ol c \in \calcopp}\!\!\!\! \oint^{d \in \calc}\!\!
  \Ho{\calcopp\boxtimes\calc} {\ol c \boti d} {\ol{a\boti b} \boti a\boti b} \oti F(\ol c,d) 
  \Nxl2 &\dsty
  \cong \oint^{a\in\cala}\!\!\! \oint^{b\in\calb}\!\! F(\ol a\boti \ol b,a\boti b)
  \,\cong \int^{a\in\cala}\!\!\! \oint^{b\in\calb}\!\! F(\ol a\boti \ol b,a\boti b) \,.
  \eear
  \ee
Here in the second and fourth line we invoke the Fubini theorem for left exact coends
(see \cite[Thm.\,B.2]{lyub11}).
The isomorphism \eqref{eq:end-boxtimes} is seen analogously.
\end{proof}

\begin{cor}
For $\calc \eq \cala \boti \calb$ the Deligne product of finite linear categories $\cala$ 
and $\calb$ the isomorphisms
  \be
  \int^{c \in \calc}\! \ol c \boti c
  \,\cong \int^{a \in \cala}\!\!\! \int^{b \in \calb}\! \ol a \boti \ol b \boti a \boti b
  \label{eq:coend-C}
  \ee
and
  \be
  \int_{c \in \calc} \ol c \boti c 
  \,\cong \int_{a \in \cala} \int_{b \in \calb} \ol a \boti \ol b \boti a \boti b
  \label{eq:end-C}
  \ee
hold.
\end{cor}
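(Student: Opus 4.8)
The plan is to read off both isomorphisms from Lemma~\ref{lemma:Del-end}, applied to the canonical functor of the Deligne product, and then to show that the inner (co)ends appearing there --- which Lemma~\ref{lemma:Del-end} delivers as (co)ends in categories of left, respectively right, exact functors --- may in the present case be taken as ordinary (co)ends in $\calcopp\boti\calc$.

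First I would put $\calx\eq\calcopp\boti\calc$ and take $F\colon\calcopp\Times\calc\To\calx$ to be the defining functor $\boxtimes$ of the Deligne product, i.e.\ $F(\ol c,c')\eq\ol c\boti c'$. Since $\boxtimes$ is exact in each variable, $F$ is in particular both left and right exact, so both halves of Lemma~\ref{lemma:Del-end} apply to it once the pertinent (co)end is known to exist. Existence follows from Corollary~\ref{cor:coend-exists} with $\calm\eq\caln\eq\calcopp$ and $F\eq\id_{\calcopp}$, which produces $\int^{c\in\calc}\ol c\boti c$ and, dually, $\int_{c\in\calc}\ol c\boti c$ in $\calcopp\boti\calc$. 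Substituting $F$ into \eqref{eq:coend-boxtimes} and \eqref{eq:end-boxtimes} and using $\ol{a\boti b}\eq\ol a\boti\ol b$ in $\calcopp$ then yields
\[
\int^{c\in\calc}\ol c\boti c \,\cong \int^{a\in\cala}\!\!\oint^{b\in\calb}\!\!\ol a\boti\ol b\boti a\boti b
\qquad\text{and}\qquad
\int_{c\in\calc}\ol c\boti c \,\cong \int_{a\in\cala}\oint_{b\in\calb}\ol a\boti\ol b\boti a\boti b.
\]

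The one substantive point is to replace the inner left exact coend $\oint^{b}$ (respectively right exact end $\oint_{b}$) by an ordinary coend $\int^{b}$ (respectively end $\int_{b}$). For this I would verify that the ordinary inner coend already exists in $\calcopp\boti\calc$; an ordinary (co)end, once it exists, satisfies the universal property defining the corresponding functor-category (co)end, so the two then coincide. To see existence, fix $a\iN\cala$ and reorder the tensor factors via $\calcopp\boti\calc\cong\calaopp\boti\cala\boti\calbopp\boti\calb$, under which $\ol a\boti\ol b\boti a\boti b$ becomes $(\ol a\boti a)\boti(\ol b\boti b)$. The inner coend is then the value on $\ol b\boti b$ of the functor $(\ol a\boti a)\boti(-)$, i.e.\ of tensoring with the fixed object $\ol a\boti a$; this functor is exact and hence preserves the coend, so
\[
\int^{b\in\calb}(\ol a\boti a)\boti(\ol b\boti b)\,\cong\,(\ol a\boti a)\boti\int^{b\in\calb}\ol b\boti b,
\]
and $\int^{b\in\calb}\ol b\boti b$ exists in $\calbopp\boti\calb$ by Corollary~\ref{cor:coend-exists}. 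Thus the ordinary inner coend exists and agrees with $\oint^{b}$, which gives \eqref{eq:coend-C}; the end statement \eqref{eq:end-C} follows by the dual argument.

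I expect the main obstacle to be precisely this passage from $\oint$ to $\int$: one must keep in mind that the (co)ends furnished by Lemma~\ref{lemma:Del-end} live a priori in functor categories, and that the reduction to ordinary (co)ends in $\calcopp\boti\calc$ hinges on the special product shape of the integrand together with exactness of the Deligne product --- for a generic integrand no such simplification is available, which is exactly the reason the symbol $\oint$ was introduced in the first place.
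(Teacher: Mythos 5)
Your proposal is correct and follows essentially the same route as the paper's own proof: apply Lemma~\ref{lemma:Del-end} to the exact functor $\boxtimes\colon\calcopp\Times\calc\To\calcopp\boti\calc$, and then identify the inner left exact coend $\oint^{b}$ (resp.\ right exact end $\oint_{b}$) with the ordinary one by exploiting the product shape of the integrand and the exactness of $x\,{\boxtimes}\,{-}$. The paper states this last identification more tersely, but your justification --- existence of $\int^{b\in\calb}\ol b\boti b$ plus preservation of the coend under the exact functor $(\ol a\boti a)\boxtimes{-}$ --- is exactly the intended argument.
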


\begin{proof}
The functor $\boxtimes\colon \calcopp {\times}\, \calc \To \calcopp \boti \calc$ is exact,
hence both of the isomorphisms \eqref{eq:coend-boxtimes} and \eqref{eq:end-boxtimes} are
applicable. Moreover, for all finite linear categories $\calx$ and all objects $x \iN \calx$ 
we have $\oint^{b\in\calb}\! x \boti \ol b \boti b \,{\cong}\, x \,{\boxtimes} \int^{b\in\calb}
\ol b \boti b$ because in this case the left exact coend is also an ordinary coend. An analogous
statement applies to the right exact end.
\end{proof}

With the help of this corollary we arrive at the following result, in which with respect to
Lemma \ref{lemma:Del-end} the role of ends and coends is interchanged:

\begin{Lemma}\label{lemma:coend-reF}
For $\calc \eq \cala \boti \calb$ and $\calx$ finite linear categories and
$F\colon \calcopp \Times \calc \To \calx$ a left exact bilinear
functor whose end exists the isomorphism
  \be
  \int_{c\in\calc}\! F(\ol c,c) \,\cong 
  \int_{a\in\cala}\! \int_{b\in\calb}\! F(\ol a\boti \ol b,a\boti b)
  \label{eq:end4le}
  \ee
of ends holds. Analogously, if the coend of a right exact bilinear functor 
$G\colon \calcopp \Times \calc \To \calx$ exists, then there is an isomorphism
  \be
  \int^{c\in\calc}\!\! G(\ol c,c) \,\cong 
  \int^{a\in\cala}\!\!\!\! \int^{b\in\calb}\!\! G(\ol a\boti \ol b,a\boti b)
  \label{eq:coend4re}
  \ee
of coends.
\end{Lemma}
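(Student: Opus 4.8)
The plan is to reduce the $\calx$-valued statement \eqref{eq:end4le} to a statement about $\vect$-valued functors by the Yoneda lemma, and then to settle that by representability together with the end decomposition \eqref{eq:end-C}. Concretely, I would first apply $\Ho\calx w-$ to both sides of \eqref{eq:end4le}. As this functor is left exact it commutes with ends by \eqref{eq:coend-hom2r}; applying the latter once on the left and twice on the right gives, naturally in $w\iN\calx$,
\[
  \Ho\calx w{\int_{c\in\calc}F(\ol c,c)}\cong\int_{c\in\calc}\tilde F(\ol c,c)
  \quad\text{and}\quad
  \Ho\calx w{\int_a\int_b F(\ol a\boti\ol b,a\boti b)}\cong\int_a\int_b\tilde F(\ol a\boti\ol b,a\boti b),
\]
with $\tilde F:=\Ho\calx w{F(-,-)}\colon\calcopp\Times\calc\To\vect$ left exact bilinear, being a composite of left exact functors. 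By Yoneda it then suffices to prove $\int_c\tilde F(\ol c,c)\cong\int_a\int_b\tilde F(\ol a\boti\ol b,a\boti b)$ for every left exact $\vect$-valued $\tilde F$.

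For this $\vect$-valued case I would use representability. By the universal property of the Deligne product for left exact functors, $\tilde F$ factors as $\hat F\cir\boxtimes$ through a left exact $\hat F\colon\calcopp\boti\calc\To\vect$; since $\calcopp\boti\calc$ is finite linear, $\hat F$ is representable by Corollary \ref{cor:leftexact-etc}, say $\hat F\cong\Ho{\calcopp\boti\calc}Z-$ for some $Z\iN\calcopp\boti\calc$. Each end then collapses to a single Hom into an end: using that $\Ho{\calcopp\boti\calc}Z-$ commutes with ends in its second slot (\eqref{eq:coend-hom2r}),
\[
  \int_{c\in\calc}\tilde F(\ol c,c)\cong\int_{c\in\calc}\Ho{\calcopp\boti\calc}Z{\ol c\boti c}
  \cong\Ho{\calcopp\boti\calc}Z{\int_{c\in\calc}\ol c\boti c},
\]
and, applying \eqref{eq:coend-hom2r} twice, $\int_a\int_b\tilde F(\ol a\boti\ol b,a\boti b)\cong\Ho{\calcopp\boti\calc}Z{\int_a\int_b\ol a\boti\ol b\boti a\boti b}$. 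By the end decomposition \eqref{eq:end-C} the objects $\int_c\ol c\boti c$ and $\int_a\int_b\ol a\boti\ol b\boti a\boti b$ coincide, so the two right-hand sides agree and the $\vect$-valued isomorphism follows. This is also what explains the shape of the result: unlike in Lemma \ref{lemma:Del-end}, the object governing the computation is the \emph{end} $\int_c\ol c\boti c$, which by \eqref{eq:end-C} is a genuine iterated end, so no functor-category ends $\oint$ appear.

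The coend statement \eqref{eq:coend4re} for a right exact $G$ is entirely analogous: testing against $w$ with $\Ho\calx{-}w$ and \eqref{eq:coend-hom} reduces it to the $\vect$-valued functor $\tilde G:=\Ho\calx{G(-,-)}w$, whose (contravariant) representability together with \eqref{eq:coend-hom} and the coend decomposition \eqref{eq:coend-C} yields the claim; alternatively \eqref{eq:coend4re} can be deduced from \eqref{eq:end4le} by passing to opposite categories via \eqref{eq:opp-coend=end}, under $\calcopp\cong\calaopp\boti\calbopp$ and $G\mapsto G^\opp$. I expect the main obstacle to be conceptual rather than computational: one cannot mimic the Fubini argument of Lemma \ref{lemma:Del-end}, because representing $F$ through the convolution \eqref{eq:delta-property} of the Hom functor produces a \emph{coend}, and commuting the desired end past that coend is illegitimate. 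Routing the argument through the Hom functor as above is exactly the device that turns the end into a Hom out of $\int_c\ol c\boti c$, where \eqref{eq:end-C} is available.
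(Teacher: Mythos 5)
Your proof is correct and follows essentially the same route as the paper's: both factor $F$ through the Deligne product as a left exact functor $\widehat F\colon\calcopp\boti\calc\To\calx$ and reduce the claim to the decomposition \eqref{eq:end-C} of the end $\int_{c\in\calc}\ol c\boti c$. The only difference is that the paper commutes $\widehat F$ past the ends by directly citing \eqref{eq:end-Fleft}, whereas you unwind that step by hand via a Yoneda reduction to $\vect$-valued functors and representability -- a harmless (if slightly longer) detour, since \eqref{eq:end-Fleft} is itself proved exactly that way.
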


\begin{proof}
Since $F$ is left exact, it induces a left exact linear functor 
$\widehat{F}\colon \calcopp \boti \calc \To \calx$. We then have
  \be
  \bearll \dsty
  \int_{c \in \calc}\! F(\ol c, c) \!\!&\dsty \cong 
  \int_{c \in \calc}\! \widehat{F}(\ol c \boti c)
  \!\stackrel{\eqref{eq:end-Fleft}}\cong\! 
  \widehat{F}\big( \int_{c \in \calc} \ol c \boti c \big) 
  \!\stackrel{\eqref{eq:end-C}}\cong\!
  \widehat{F}\big( \int_{a \in \cala} \int_{b \in \calb} \ol a \boti \ol b \boti a \boti b \big)
  \Nxl2 &\dsty 
  \!\!\!\!\stackrel{\eqref{eq:end-Fleft}}\cong\!\!
  \int_{a \in \cala} \int_{b \in \calb}  \widehat{F}( \ol a \boti \ol b \boti a \boti b) 
  \,\cong \int_{a \in \cala} \int_{b \in \calb}\!  F( \ol a \boti \ol b,  a \boti b) \,.
  \eear
  \ee
The isomorphism \eqref{eq:coend4re} follows analogously with the help of \eqref{eq:coend-C}.
\end{proof}


\subsection{Nakayama functors for finite linear categories}\label{ssec:Nakayama}

Let $A \eq (A,\mu,\eta)$ be a \findim\ \ko-algebra; recall that we assume all modules
and bimodules to be \findim. As in Section \ref{ssec:background} we denote 
the finite \ko-linear category of \findim\ left $A$-modules by $A\Mod$ and write the Hom functor 
on $A\Mod^\opp \Times A\Mod$ as $\Ho {\!A} --$. We denote the algebra opposite to $A$ by $\Aop$.
In the sequel, whenever it is required in order for an expression to make sense
that $A$ or the dual vector space $\Awee \eq \,\Ho\ko A\ko$
has the structure of a left or right $A$-module or of an $A$-bimodule, then $A$
is to be regarded as being endowed with the regular left or right $A$-action(s),
while $\Awee$ is regarded as being endowed with the duals of these actions (i.e., for short,
as the \emph{co-regular} left, right or bi-module).

The \emph{Nakayama functor} of $A\Mod$ is the right exact endofunctor
  \be
  \pift_A := \Hox {\!A} -A
  \label{eq:piftA}
  \ee
of $A$\Mod.
We also consider its left exact cousin, the endofunctor
  \be                           
  \pifu_A := \Ho {\!\Aop} {-^* \!} A
  \label{eq:pifuA}
  \ee
of $A$\Mod.
These functors can also be described (see \cite[Lemma\,III.2.9]{ASss}:
and \cite[Prop.\,3.1]{ivanS}) by the isomorphic functors
  \be
  \pift_A \cong A^*_{} \otA {-}  \qquad {\rm and} \qquad
  \pifu_A \cong \Ho{\!A} {\Awee} - \,,
  \label{eq:piftA++}
  \ee
respectively, with $\Awee$ the co-regular $A$-bimodule.
The restrictions of these two endofunctors to the subcategories of projective and of 
injective $A$-modules, respectively, are quasi-inverse equivalences 
$A$-Projmod$\;{\stackrel\simeq\longleftrightarrow}\,A$-Injmod \cite[Sect.\,3]{ivanS}.

In this paper we are interested in finite linear categories $\calx$ as abstract categories, 
not in their particular realization as $A\Mod$ for some specific algebra $A$.
Accordingly we wish to work with a purely categorical variant of the Nakayama functor and its
left exact analogue.
Note that the identity functor $\id_\calx$ is both left and right exact, so for $\calx$ a finite 
linear category we can apply both of the functors \eqref{eq:Gammalr,Gammarl} to it. 
We can thus give

\begin{Definition}
The \emph{Nakayama functor} of a finite linear category $\calx$ is the endofunctor
  \be
  \pift_\calx := \Gammarl(\id_\calx)
  = \int^{x\in\calx}\!\!\! \Hox\calx -x \oti x~\in \Funre(\calx,\calx) \,,
  \ee
i.e.\ the image of the identity functor $\id_\calx$, seen as a left exact functor, in 
$\Funre(\calx,\calx)$.
\\
The \emph{left exact analogue of the Nakayama functor} of $\calx$ is the image 
  \be
  \pifu_\calx := \Gammalr(\id_\calx) = \int_{x\in\calx} \Ho\calx x- \oti x ~\in \Funle(\calx,\calx)
  \label{eq:pifu}
  \ee
of $\id_\calx$, seen as a right exact functor, in $\Funle(\calx,\calx)$.
\end{Definition}

Applying Lemma \ref{lem:Psile-as-end} to the identity functor,
it follows immediately that these functors satisfy
  \be
  \int^{x\in\calx}\!\! \ol x  \boti \pifu_{\!\calx}(x) \,\cong \int_{y\in\calx} \ol y \boti y
  \qquad {\rm and} \qquad
  \int_{x\in\calx}\! \ol x \boti \pift_{\!\calx}(x) \,\cong \int^{y\in\calx}\!\! \ol y \boti y
  \,.
  \ee

The following considerations will show that the chosen terminology is appropriate.
To connect the functors $\pifu_\calx$ and $\pift_\calx$ to the representation theoretic
structures occurring in \eqref{eq:piftA}\,--\,\eqref{eq:piftA++}, let us choose a \findim\ 
algebra $A$ with an equivalence $A\Mod \,{\simeq}\, \calx$. From now on we tacitly identify 
$\calx$ with $A\Mod$.
Doing so we can state

\begin{Lemma}\label{lem:Pi=A*}
The Nakayama functor of a finite linear category $\calx \,{\simeq}\, A\Mod$ satisfies
  \be
  \pift_\calx \cong ({}_AA_A)^*_{} \otA {-} \,.
  \label{eq:defPI}
  \ee
\end{Lemma}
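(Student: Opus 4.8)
The plan is to compute the abstract Nakayama functor $\pift_\calx = \Gammarl(\id_\calx)$ directly from its coend expression and identify it with $({}_AA_A)^\wee \otA {-}$ under the equivalence $\calx \simeq A\Mod$. First I would recall from the definition that $\pift_\calx = \int^{x\in\calx} \Hox\calx -x \oti x$, where $\Hox\calx -x$ denotes the dual Hom functor. Since we have fixed the equivalence $\calx \simeq A\Mod$, this becomes the coend $\int^{m\in A\Mod}\! \Hox A -m \oti m$ in $\Funre(A\Mod,A\Mod)$.

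The key step is to recognize that this coend is an instance of the Peter-Weyl type result already established. Indeed, by Corollary \ref{cor:intAmod...=A*} we have the coend isomorphism $\int^{m\in A\Mod}\! m \otik m^\wee \cong {}_A^{} A_A^\wee$ of $A$-bimodules. The functor $\pift_\calx$ applied to an object $y\iN A\Mod$ produces, after using $\Hox A ym \cong (\Ho A my)^\wee \cong \Ho A y{m^\wee} \otik \cdots$, a coend over $m$ that I expect to reduce to tensoring $y$ with the co-regular bimodule $A^\wee$. More precisely, I would use the convolution property \eqref{eq:codelta-property} of the dual Hom functor together with the right exactness of $\pift_\calx$, which by \eqref{eq:coend-Gright} allows the coend to be evaluated object-wise. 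The goal is to show that for each $y$ one obtains $\pift_\calx(y) \cong A^\wee \otA y$, naturally in $y$.

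The cleanest route, however, is likely to avoid computing on objects and instead invoke Lemma \ref{lem:lexrexbimod}(ii) directly: that lemma already states that the equivalence $\Funle(A\Mod,A\Mod) \xrightarrow{\simeq} \Funre(A\Mod,A\Mod)$ sends the identity functor, regarded as left exact, to the right exact functor $({}_AA_A)^\wee \otimes_A -$. Since the abstract equivalence realizing $\Gammarl = \Phire \circ \Psile$ is, under the identification $\calx \simeq A\Mod$, precisely the Eilenberg-Watts equivalence of that lemma, the claimed isomorphism \eqref{eq:defPI} follows. Thus I would structure the proof as: (1) unwind $\pift_\calx = \Gammarl(\id_\calx)$ into the dual Hom coend; (2) observe that under $\calx \simeq A\Mod$ the functor $\Gammarl$ coincides with the composite of the two Eilenberg-Watts equivalences of Lemma \ref{lem:lexrexbimod}(i); and (3) apply part (ii) of that lemma to read off the image of the identity functor.

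The main obstacle I anticipate is verifying that the \emph{abstract} equivalence $\Gammarl$ defined via $\Phire \circ \Psile$ and coends genuinely matches the \emph{concrete} Eilenberg-Watts equivalence of Lemma \ref{lem:lexrexbimod}, which is phrased in terms of the bimodule $(F({}_AA_A^\wee))^\wee$ and the tensor functor $G({}_AA_A)^\wee \otimes_A -$. This compatibility is essentially the content of the coend formulas in Proposition \ref{prop:end(GotikWee}, which identify $\int_{m} G(m)\otik m^\wee$ with $G(A)$ and $\int^m G(m)\otik m^\wee$ with $G(A^\wee)$; applying these with $G = \id$ closes the gap. If one prefers the object-wise computation instead, the subtle point is tracking the dinatural structure maps so that the resulting $A$-bimodule structure on the coend is the co-regular one rather than some twisted variant, but this too is controlled by \eqref{eq:i_m} in Corollary \ref{cor:intAmod...=A*}.
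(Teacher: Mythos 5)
Your proposal is correct, but it reaches the result by a genuinely different (if closely related) route. The paper's own proof is more direct: since $\pift_\calx$ is right exact, part (R3) of Lemma \ref{lem:shimi7:2.6} immediately gives $\pift_\calx \cong \pift_\calx({}_AA_A)\otA{-}$, and then $\pift_\calx({}_AA_A)$ is computed as the coend $\int^{y} \Hom_A(A,y)^*\oti y\cong\int^{y} y^*\oti y\cong ({}_AA_A)^*$ using the Peter--Weyl isomorphism \eqref{eq:coend=A*}. You instead route the argument through Lemma \ref{lem:lexrexbimod}(ii), which obliges you to check that the abstract equivalence $\Gammarl=\Phire\cir\Psile$ agrees with the concrete Eilenberg--Watts equivalence of that lemma; you correctly identify this as the crux and point to Proposition \ref{prop:end(GotikWee} to close it. That check is legitimate and is essentially the computation $\Gammarl(F)\cong F(A^*)\otA{-}$ that the paper itself performs later in Section \ref{ssec:Nakayama} using \eqref{eq:coend=G(A*)} and \eqref{eq:R4expl} --- but note that it needs both halves: besides identifying $\Psile(\id_\calx)=\int^{m}\ol m\boti m$ with $A^*$ you must also verify that $\Phire$ corresponds to $M\mapsto M\otA{-}$, which is where \eqref{eq:R4expl} enters and which your sketch leaves implicit. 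Both proofs ultimately rest on the same two inputs (the Eilenberg--Watts description of right exact functors and the coend $\int^{m} m\otik m^*\cong A^*$); the paper's version is marginally more economical because it never has to match two a priori different equivalences, while yours, once the matching is done, yields the stronger statement $\Gammarl(F)\cong F(A^*)\otA{-}$ for an arbitrary linear functor $F$ as a byproduct. Your first, object-wise sketch via the convolution property is too vague to stand on its own (the fragment $\Hox A ym\cong\Ho A y{m^\wee}\otik\cdots$ is not a correct identity as written), but since you discard it in favour of the second route this does not affect the validity of the proof.
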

  
\begin{proof}
The Nakayama functor $\pift \,{\equiv}\, \pift_\calx$ is right exact, hence upon the 
identification $\calx \eq A\Mod$, according to (R3) of Lemma \ref{lem:shimi7:2.6} there 
is a natural isomorphism 
  \be
  \pift \cong \pift({}_AA_A) \otA -\,.
  \label{eq:pift=piftAota-}
  \ee
Now for any $y\iN \calx$ we have
an isomorphism $\Ho A {{}_AA_A} y \,{\cong}\, y$ as left $A$-modules
and hence an iso\-mor\-phism $\Hox A {{}_AA_A} y \,{\cong}\, y^*$ of right $A$-modules.
This implies that
  \be
  \pift({}_{A}A_{A}) = \int^{y\in A\Mod}\!\! \Hox A {{}_AA_A} y \oti y
  \,\cong \int^{y\in A\Mod}\!\! y^{*} \oti y \stackrel{\eqref{eq:coend=A*}}\cong ({}_AA_A)^*_{}
  \ee
as $A$-bimodules. Combining this isomorphism with the expression 
\eqref{eq:pift=piftAota-} for $\pift$ gives \eqref{eq:defPI}.
\end{proof}

Next we note that, by construction, $\pifu_\calx$ is left exact and thus has a left adjoint,
while $\pift_\calx$ is right exact and thus has a right adjoint. Taking $G \eq \id_\calx$
in Corollary \ref{cor:GammarlGra} we have in fact

\begin{Lemma}\label{lem:pifu-adjunction}
For any  finite linear category $\calx$ the functor $\pift_\calx$ is left adjoint to 
$\pifu_\calx$.
\end{Lemma}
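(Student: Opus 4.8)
The plan is to obtain the statement as the special case $\cala\eq\calb\eq\calx$ and $G\eq\id_\calx$ of the first assertion of Corollary \ref{cor:GammarlGra}, so that essentially all of the work has already been carried out there. The only extra ingredients are two elementary observations about the identity functor, after which the conclusion is read off by unwinding definitions.

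First I would record that $\id_\calx$ is right exact (indeed exact), so it is a legitimate object $G\iN\Funre(\calx,\calx)$ to feed into Corollary \ref{cor:GammarlGra}. Second, the identity functor is its own right adjoint: the required natural isomorphism $\Ho\calx{\id_\calx(x)}y\,{\cong}\,\Ho\calx x{\id_\calx(y)}$ is trivially satisfied, whence $\id_\calx^\ra\eq\id_\calx$. With these remarks the first assertion of Corollary \ref{cor:GammarlGra}, namely that $\Gammarl(G^\ra)$ is left adjoint to $\Gammalr(G)$, specializes to the statement that $\Gammarl(\id_\calx)$ is left adjoint to $\Gammalr(\id_\calx)$. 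Inserting the definitions $\pift_\calx\eq\Gammarl(\id_\calx)$ and $\pifu_\calx\eq\Gammalr(\id_\calx)$ then yields exactly the claim that $\pift_\calx$ is left adjoint to $\pifu_\calx$. (The second assertion of Corollary \ref{cor:GammarlGra}, applied to $F\eq\id_\calx$ with $F^\la\eq\id_\calx$, gives the same conclusion from the dual side.)

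There is no genuine obstacle in this argument; its entire content is the recognition that $\pift_\calx$ and $\pifu_\calx$ are the images of a \emph{self-adjoint} $\id_\calx$ under the two passages $\Gammarl$ and $\Gammalr$, and the hard analytic input sits in Corollary \ref{cor:GammarlGra} (which in turn rests on Lemma \ref{lem:adj4(co)ends} and the convolution properties of the Hom and dual Hom functors). If a self-contained derivation is preferred, one may instead repeat the Hom-space computation \eqref{eq:adj4Gamma()} verbatim with $G\eq\id_\calx$; in that case the step invoking \eqref{eq:coend-vs-Fl} degenerates into the tautology $\int^{c\in\calx}\Hox\calx bc\oti c\,{\cong}\,\int^{d\in\calx}\Hox\calx bd\oti\id_\calx(d)$, confirming that the general computation collapses to the desired adjunction isomorphism $\Ho\calx{\pift_\calx(b)}a\,{\cong}\,\Ho\calx b{\pifu_\calx(a)}$ natural in $a,b\iN\calx$.
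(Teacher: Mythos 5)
Your proposal is correct and coincides with the paper's own argument: the lemma is obtained precisely by specializing Corollary \ref{cor:GammarlGra} to $\cala\eq\calb\eq\calx$ and $G\eq\id_\calx$ (with $\id_\calx^\ra\eq\id_\calx$), so that $\Gammarl(\id_\calx)\eq\pift_\calx$ is left adjoint to $\Gammalr(\id_\calx)\eq\pifu_\calx$. Your remark that one could alternatively rerun the computation \eqref{eq:adj4Gamma()} with $G\eq\id_\calx$ is a valid, if redundant, self-contained fallback.
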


We finally note that the expressions for the Nakayama functors have natural generalizations 
obtained by replacing the identity functor by an arbitrary linear functor. Indeed, given a 
right exact linear functor $G\colon A\Mod \To B\Mod$, the linear functor 
$\Gammalr(G)\colon A\Mod \To B\Mod$ can, with the help of the Peter-Weyl isomorphism 
\eqref{eq:end-G}, be written as
  \be
  \bearll
  \Gammalr(G) \!\!&\dsty = \int_{m \in A\Mod} \Ho A m- \otimes G(m)
  \,\cong \HO A {\int^{m\in A\Mod}\! G(m)^* \oti m } -
  \Nxl2&
  \cong \Ho A {\big( \int_{m\in A\Mod} G(m) \otimes m^* \big)^*} - \,\cong \Ho A {G(A)^*}- \,.
  \eear
  \label{eq;Gammalr(G)}
  \ee
This provides the explicit form of the Eilenberg-Watts equivalence between right and left
exact functors that exists according to Lemma \ref{lem:lexrexbimod}(i). Note that 
the calculation in \eqref{eq;Gammalr(G)} still makes sense if $G$ is just required 
to be a linear functor. Thus the prescription for $\Gammalr$ in \eqref{eq:Gammarl,Gammalr}
in fact defines a functor from the category $\Fun(A\Mod,B\Mod)$ of \emph{all} linear
functors to $\Funle(A\Mod,B\Mod)$.
Similarly it follows from \eqref{eq:coend=G(A*)}, using the description \eqref{eq:R4expl}
of the tensor product over $A$, that 
  \be
  \bearl\dsty
  \int^{m\in A\Mod}\!\! \Hox A -m \oti F(m) 
  \,\cong \int^{m\in A\Mod}\!\! \Hox A - {m \otik F(m)^*}
  \Nxl1\hspace*{4.6em}
  \cong \Big( {\dsty\int_{m\in A\Mod}} \Ho A - {m \otik F(m)^*} \Big)^{\!*}
  \,\stackrel{\eqref{eq:coend-hom2r}}\cong\!  \Hox A - {\int_{m\in A\Mod} m \otik F(m)^*}
  \Nxl1\hspace*{4.6em}
  \cong \Hox A - {\big(\! \int^{m\in A\Mod}\!  F(m) \otik m^*\big)^*}
  \stackrel{\eqref{eq:coend=G(A*)}}\cong\! \Hox A - {F(A^*)^*}
  \stackrel{\eqref{eq:R4expl}}\cong\!  F(A^{*}) \otimes_A {-} \,.
  \eear
  \ee
This shows that 
  \be
  \Gammarl(F) \,\cong\, F(A^*) \otimes_A - 
  \ee
and that the prescription for $\Gammarl$ in \eqref{eq:Gammarl,Gammalr}
extends to a functor from $\Fun(A\Mod,B\Mod)$ to $\Funre(A\Mod,B\Mod)$.

\begin{Remark}
For $\calx$ a finite linear category, the Nakayama functors and the equivalences appearing 
in the triangle \eqref{tria:AoppB-lex-rex} can be used to endow the category 
$\calm \eq \calx^\opp \boti \calx$ with the structure of a Grothendieck-Verdier (GV)
category. Recall \cite{boDr} that a GV-structure on a monoidal
category $\calm$ amounts to an object $N \iN \calm$ and an equivalence 
$D_N\colon \calm \To \calmopp$ with a natural family of isomorphisms
$\Ho \calm  {x\oti y} N \cong  \Ho \calm {y} {D_{N}(x)}$.
 \\
In our case we use the equivalence of $\calm$ with the monoidal category 
$\Funre(\calx,\calx)$ to describe the GV-structure. The category $\Funre(\calx,\calx)$ 
has an obvious monoidal structure by composition of functors. For the object $N$ we take
the Nakayama functor $\pift_{\calx} \iN \Funre(\calx,\calx)$. As
the equivalence $D_{N}$ we take the functor that maps $G \iN \Funre(\calx,\calx)$ to the
functor $\Gammarl(G^{\ra}) \iN \Funre(\calx,\calx)^{\opp}$. Using that
$\pift_\calx \eq \Gammarl(\id_\calx)$ with $\Gammarl \eq \Phire \cir \Psile$
and invoking Corollary \ref{cor:oint}, for $G,H \iN \Funre(\calx,\calx)$ we obtain
  \be
  \begin{array}{lcl}
  \Nat_\Funre(H {\circ}\, G, \pift_{\calx}) \!\!\!\!\!\!&
  =&\!\! \Nat_\Funre\big( H {\circ}\, G, \Phire\cir\Psile(\id_\calx) \big)
  \,\cong \Ho\calm {\Psire(H {\circ}\, G)} {\Psile(\id_\calx)}
  \Nxl1&
  \stackrel{\eqref{Phile...Psire}}\cong &\!\!
  \Ho\calm {\int_{x\in\calx} \ol x \boti H {\circ}\, G(x)} {\int^{y\in\calx} \ol y \boti y}
  \Nxl2&
  \!\!\!\!\!\stackrel{\eqref{eq:oint1},\eqref{eq:oint2}}\cong \!\!\!\!\!&\dsty
  \oint^{x\in\calx}\!\!\! \oint^{y\in\calx}\!\!\!
  \Ho\calm {\ol x \boti H {\circ}\, G(x)} {\ol y \boti y}
  \Nxl2& 
  \cong &\!\! \dsty
  \oint^{x\in\calx}\!\!\! \oint^{y\in\calx}\!\!\! \Ho\calx yx \otik \Ho\calx {H {\circ}\, G(x)} y
  \eear
  \ee
and analogously (using that $H$ has a right adjoint $H^\ra$)
  \be
  \bearll
  \Nat_\Funre(G,\Gammarl(H^\ra)) \!\!&
  =\, \Nat_\Funre\big( G , \Phire\cir\Psile(H^\ra) \big)
  \,\cong \Ho\calm {\Psire(G)} {\Psile(H^\ra)} \qquad
  \Nxl2&\dsty
  \cong \oint^{x\in\calx}\!\!\! \oint^{y\in\calx}\!\!\! \Ho\calx yx \otik \Ho\calx {G(x)} {H^\ra(y)} \,.
  \eear
  \ee
When combined with the adjunction we thus arrive at a natural isomorphism
  \be
  \Nat_\Funre(H \cir G, \pift_\calx) \,\cong\, \Nat_\Funre(G,\Gammarl(H^\ra))
  \ee
for all $G,H \iN \Funre(\calx,\calx)$.
 \\
General results on GV-categories \cite[Prop.\,1.2]{mani27} imply that the equivalence
$D_{N}$ induces another monoidal structure on $\Funre(\calx,\calx)$ for which the
Nakayama functor $\pift_\calx$ is the monoidal unit. This monoidal structure can be 
recognized as the one that is obtained by identifying $\Funre(\calx,\calx)$, via the
functors $\Gammalr$ and $\Gammarl$, with the monoidal category $\Funle(\calx,\calx)$.
\end{Remark}


\subsection{Properties of Nakayama functors}

In this subsection we use the categorical formulation of the Nakayama 
functor to describe its behavior with respect to the composition with functors, to the 
Deligne product, and to taking opposites, and we characterize the finite linear categories
whose Nakayama functor is an equivalence. We also determine the Nakayama functors 
of the categories of left (right) exact endofunctors between linear categories. 
The corresponding results in the case of modules over algebras are rather evident. Still, 
the categorical formulation has advantages, for instance
when discussing the coherence natural isomorphisms in the following result. 

When the double left adjoint of a functor $F$ exists, we denote it by $ F^\lla$; analogously 
we denote the double right adjoint by $G^{\rra}$. We have

\begin{thm}\label{thm:picu.F=Fll.pifu}
(i)\, 
Let $F\colon \cala\To\calb$ be a left exact functor between finite linear categories
having a left exact left adjoint $F^\la$ whose left adjoint is again left exact. Then there is a 
natural isomorphism
  \be
 \varphi^{\rm l}_F \colon \quad \pifu_\calb \circ F \,\cong\, F^\lla \circ \pifu_\Cala 
  \label{eq:pifu.F=F.pifu}
  \ee
of functors. Analogously,
for a right exact functor $G\colon \cala\To\calb$ having a right exact right adjoint $G^{\ra}$
whose right adjoint is again right exact, there is a natural isomorphism
 \be
 \varphi^{\rm r}_G \colon \quad \pift_\calb \circ G \,\cong\, G^\rra \circ \pift_\Cala \,.
  \label{eq:pift.G=G.pifr}
  \ee
(ii)\,
The isomorphisms $\varphi_F^{\rm l}$ and $\varphi_G^{\rm r}$ are coherent in the following sense:
If $F_1\colon \cala \To \calb$ and $F_2\colon \calb \To \calc$ are left exact functors, 
then the diagram 
  \be
  \begin{tikzcd}
  \pifu_\calc \cir F_2 \circ F_1 \ar{r}{\varphi^{\rm l}_{F_2} \circ\, \id}
  \ar{d}{\varphi_{F_2 \circ F_1}^{\rm l}}
  & F_2^\lla \cir \pifu_\calb \cir F_1 \ar{d}{\id \,\circ\, \varphi_{F_1}^{\rm l}}
   \\
  (F_2 \cir F_1)^\lla \cir \pifu_\cala \ar{r}{\cong}
  & F_2^\lla \cir F_1^\lla \cir \pifu_\cala
  \end{tikzcd}
  \label{eq:coherence-comp}
  \ee
commutes, with the unnamed isomorphism in the lower row obtained from the
canonical isomorphism $(F_2 \cir F_1)^\lla \,{\cong}\, F_2^\lla \cir F_1^\lla$.
The analogous diagram for $\varphi^{r}$ commutes as well. 
\end{thm}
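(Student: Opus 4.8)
The plan is to prove the left exact statement \eqref{eq:pifu.F=F.pifu} and then to obtain the right exact statement \eqref{eq:pift.G=G.pifr} from it by passing to opposite categories, using that $(-)^{\opp}$ exchanges left and right exact functors, interchanges ends and coends as in \eqref{eq:opp-coend=end}, and carries $\pifu$ to $\pift$ (all of which is immediate from the defining formulas). So I would concentrate on \eqref{eq:pifu.F=F.pifu}.

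The first step is to observe that the hypotheses make the two adjoints \emph{exact}: $F^\la$ is left exact by assumption and right exact as a left adjoint, hence exact, and likewise $F^{\lla}$ is left exact by assumption and right exact as a left adjoint. This lets me identify both sides of \eqref{eq:pifu.F=F.pifu} with $\Gamma$-images. Since $F^{\lla}$ is left exact it commutes with the end defining $\pifu_\cala$, so from \eqref{eq:Gammarl,Gammalr} one reads off $F^{\lla}\cir\pifu_\cala\cong\Gammalr(F^{\lla})$, both being $\int_{a'}\Ho\cala{a'}a\oti F^{\lla}(a')$ as functors of $a\iN\cala$. Dually, since $F^\la$ is exact it commutes with the coend defining $\pift_\calb$, which gives $F^\la\cir\pift_\calb\cong\Gammarl(F^\la)$ in the same manner.

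The heart of the argument is then a comparison of adjoints, which avoids any delicate $(\mathrm{co})$end bookkeeping. By Lemma \ref{lem:pifu-adjunction} we have $\pift_\calb\dashv\pifu_\calb$, and composing this adjunction with $F^\la\dashv F$ yields $F^\la\cir\pift_\calb\dashv\pifu_\calb\cir F$. On the other hand, Corollary \ref{cor:GammarlGra}, applied to the right exact functor $F^{\lla}$ whose right adjoint is $F^\la$, gives $\Gammarl(F^\la)\dashv\Gammalr(F^{\lla})$. Combining these with the two identifications of the previous paragraph, I see that $\pifu_\calb\cir F$ and $F^{\lla}\cir\pifu_\cala\cong\Gammalr(F^{\lla})$ are both right adjoint to one and the same functor $\Gammarl(F^\la)\cong F^\la\cir\pift_\calb$. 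Uniqueness of adjoints then furnishes the natural isomorphism $\varphi^{\rm l}_F\colon\pifu_\calb\cir F\cong F^{\lla}\cir\pifu_\cala$, completing part (i); the only nontrivial inputs are the two ``pull an exact functor through a $(\mathrm{co})$end'' steps and the composition of adjunctions.

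For part (ii) I would make the construction of $\varphi^{\rm l}_F$ explicit as the mate isomorphism arising from the above pair of adjunctions, and then read the square \eqref{eq:coherence-comp} as an assertion about composites of mates. The plan is to reduce it to three ingredients: the functoriality of $\Gammalr$ and $\Gammarl$ (so that the identifications $F^{\lla}\cir\pifu_\cala\cong\Gammalr(F^{\lla})$ are natural and strictly compatible with composition), the coherence clause already contained in Lemma \ref{lem:adj4(co)ends}, and the compatibility of the canonical isomorphism $(F_2\cir F_1)^{\lla}\cong F_2^{\lla}\cir F_1^{\lla}$ with the composition of the underlying adjunctions. The main obstacle is exactly this last point: one must verify, via the calculus of mates, that the mate of the composite adjunction $F_1^\la\cir F_2^\la\cir\pift_\calc\dashv\pifu_\calc\cir F_2\cir F_1$ coincides with the pasting of the individual mates for $F_1$ and $F_2$. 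Granting this compatibility, the square \eqref{eq:coherence-comp} commutes by the uniqueness built into the relevant universal properties, and the analogous diagram for $\varphi^{\rm r}$ follows by the same opposite-category reduction.
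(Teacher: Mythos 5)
Your argument is correct, but for part (i) it takes a genuinely different route from the paper's. The paper proves \eqref{eq:pifu.F=F.pifu} by a direct chain of isomorphisms: starting from $\pifu_\calb(F(x)) \,{=} \int_{b\in\calb} \Ho\calb b{F(x)} \oti b \,{\cong} \int_{b\in\calb} \Ho\cala {F^\la(b)}x \oti b$, it recognizes the right hand side as the image of $\int_{b\in\calb} \ol{F^\la(b)} \boti b$ under a left exact functor, applies the isomorphism \eqref{eq:end-vs-Gr} of Lemma \ref{lem:adj4(co)ends} with $G \eq F^\lla$ to replace that end by $\int_{a\in\cala} \ol a \boti F^\lla(a)$, and then pulls the left exact $F^\lla$ out of the end. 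You instead exhibit both sides of \eqref{eq:pifu.F=F.pifu} as right adjoints of one and the same functor $F^\la \cir \pift_\calb \,{\cong}\, \Gammarl(F^\la)$, using Lemma \ref{lem:pifu-adjunction} and Corollary \ref{cor:GammarlGra}, and conclude by uniqueness of adjoints; since Corollary \ref{cor:GammarlGra} is itself deduced from Lemma \ref{lem:adj4(co)ends}, the two arguments rest on the same underlying fact but package it differently. What your route buys is a cleaner part (i); what it costs is that $\varphi^{\rm l}_F$ is then only characterized by a universal property rather than given explicitly, which is why your part (ii) must pass through the calculus of mates (compatibility of the pasted adjunctions with $(F_2\cir F_1)^\lla \,{\cong}\, F_2^\lla \cir F_1^\lla$), whereas the paper reads the coherence off directly from the coherence clause of Lemma \ref{lem:adj4(co)ends}; both treatments of (ii) are, however, at a comparably terse level of detail, and your reduction of the right exact case to the left exact one via opposite categories is legitimate since the needed facts ($\pifu_\Calaopp \,{\cong}\, (\pift_\Cala)^\opp$ and the exchange of adjoints under $(-)^\opp$) follow immediately from the definitions. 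One minor remark: for the identification $F^\la \cir \pift_\calb \,{\cong}\, \Gammarl(F^\la)$ you only need $F^\la$ to be cocontinuous, which it is automatically as a left adjoint; its assumed left exactness is needed only so that $\Gammarl(F^\la)$ is defined and Corollary \ref{cor:GammarlGra} applies.
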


\begin{proof}
For $x \iN \cala$ we have
  \be
  \bearll
  \pifu_{\calb}(F(x)) \!\!&\dsty = \int_{b\in\calb} \Ho\calb b{F(x)} \oti b
  \,\cong \int_{b\in\calb} \Ho\cala {F^\la(b)}x \oti b
  \Nxl2&
  \cong \big[ \Ho\cala {\reflectbox?}x \oti ? \big] (\int_{b\in\calb} \ol{F^\la(b)} \boti b)
  \Nxl2&
  \!\!\!\stackrel{\eqref{eq:end-vs-Gr}}\cong
  \big[ \Ho\cala {\reflectbox?}x \oti ? \big] (\int_{a\in\cala} \ol a \boti F^\lla(a))
  \dsty
  \,\cong \int_{a\in\cala} \Ho\cala ax \oti  F^\lla(a)
  \Nxl2&
  \cong\,  F^\lla \big( \int_{a\in\cala} \Ho\cala ax \oti a \big) \,=\,  F^\lla(\pifu_\Cala(x))
  \eear
  \ee
naturally in $x$. Here in the second and third line we use that $\Ho\cala {\reflectbox?}x \oti ?$
is left exact and thus commutes with the end, and the first isomorphism in the third line 
is \eqref{eq:end-vs-Gr} with $G \eq F^\lla$; the last line holds because $F^{\lla}$ is 
required to be left exact.
\\[2pt]
The coherence statement with respect to the composition of the functors follows easily from 
the one in  Lemma \ref{lem:adj4(co)ends}.
\end{proof}

Since the double left adjoint of the right adjoint of a functor is just a left adjoint, we 
have directly
  
\begin{cor}\label{cor:left-right-rel}
(i)\,
Let $F\colon \cala\To\calb$ be an exact functor whose left adjoint is again left exact. 
Then there is a natural isomorphism $\pifu_\Cala \cir F^\ra \,{\cong}\, F^\la \cir \pifu_\calb$.
Similarly, if $G\colon \cala\To\calb$ is an exact functor whose right adjoint is again right
exact, then there is a natural isomorphism 
$\pift_\Cala \cir G^\la \,{\cong}\, G^\ra \cir \pift_\calb$.
\\[2pt]
(ii)\,
If in addition $\pifu_\Cala$ is an equivalence, then one can express the right adjoint 
of $F$ in terms of the left adjoint as 
  \be
   F^\ra \,\cong\, \pift_\Cala \circ F^\la \circ \pifu_\calb \,.
   \label{eq:Fra=N.Fla.N}
  \ee
And similarly, if $\pifu_\calb$ is an equivalence, then
$F^\la \,{\cong}\,\pifu_\Cala \cir F^\ra \cir \pift_\calb$.
\end{cor}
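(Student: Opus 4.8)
The plan is to obtain both parts as immediate consequences of Theorem \ref{thm:picu.F=Fll.pifu}, applied not to $F$ (resp.\ $G$) itself but to one of its adjoints. The guiding observation is the one already flagged before the statement: passing to the left adjoint of the right adjoint of a functor returns the original functor, so the iterated-adjoint hypotheses of the theorem can be arranged to match the data at hand.

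First I would treat part (i). Since $F\colon\cala\To\calb$ is exact it is in particular right exact, so by Corollary \ref{cor:leftexact-etc} it admits a right adjoint $F^\ra\colon\calb\To\cala$. I would then check that $F^\ra$ meets the hypotheses of Theorem \ref{thm:picu.F=Fll.pifu}(i): being a right adjoint it is left exact; its left adjoint is $(F^\ra)^\la\cong F$, which is exact and hence left exact; and its double left adjoint is $(F^\ra)^\lla\cong F^\la$, which is left exact by assumption. Applying the theorem to $F^\ra$, with $\calb$ and $\cala$ now playing the roles of source and target, yields
\[
\pifu_\cala\cir F^\ra \,\cong\, (F^\ra)^\lla\cir\pifu_\calb \,\cong\, F^\la\cir\pifu_\calb ,
\]
the first claimed isomorphism. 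The second one follows symmetrically: for $G$ exact with right exact $G^\ra$, the functor $G^\la\colon\calb\To\cala$ is right exact, has right adjoint $(G^\la)^\ra\cong G$ (exact, hence right exact) and double right adjoint $(G^\la)^\rra\cong G^\ra$ (right exact by hypothesis), so the right-exact half of Theorem \ref{thm:picu.F=Fll.pifu} gives $\pift_\cala\cir G^\la\cong G^\ra\cir\pift_\calb$.

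For part (ii) I would invoke Lemma \ref{lem:pifu-adjunction}, by which $\pift_\calx$ is left adjoint to $\pifu_\calx$; when $\pifu_\cala$ is an equivalence its left adjoint is forced to be a quasi-inverse, so $\pift_\cala\cir\pifu_\cala\cong\id$. Pre-composing the first isomorphism of (i) with $\pift_\cala$ then cancels the pair $\pift_\cala\cir\pifu_\cala$ on the left and produces exactly \eqref{eq:Fra=N.Fla.N}, while the remaining formula follows in the same way by post-composing with $\pift_\calb$ and using $\pifu_\calb\cir\pift_\calb\cong\id$. I do not expect a genuine obstacle here, since this really is a corollary; the only point requiring care is the bookkeeping needed to confirm that each exactness hypothesis of Theorem \ref{thm:picu.F=Fll.pifu} is inherited by the adjoint to which it is applied, and that the identifications $(F^\ra)^\la\cong F$ and $(G^\la)^\ra\cong G$ are legitimate — which they are, because all the relevant adjunctions exist and adjoints are unique up to natural isomorphism.
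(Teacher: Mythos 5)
Your proposal is correct and follows essentially the same route as the paper: the corollary is obtained by applying Theorem \ref{thm:picu.F=Fll.pifu} to the adjoint $F^\ra$ (resp.\ $G^\la$), using that the double left adjoint of a right adjoint is just a left adjoint, and part (ii) then follows by cancelling the quasi-inverse pair $\pift$, $\pifu$ via Lemma \ref{lem:pifu-adjunction}. Your verification that each exactness hypothesis of the theorem is inherited by $F^\ra$ is exactly the bookkeeping the paper leaves implicit.
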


We continue with further categorical properties of the Nakayama functors. 

\begin{Proposition}\label{proposition:Nak-boxtimes-opp}  
(i)\, The Nakayama functors for the Deligne product $\cala \boti \calb$ of two finite linear
categories can be expressed in terms of the Nakayama functors for its factors as
  \be
  \pifu_{\Cala \boxtimes \calb} \cong \pifu_\Cala \boxtimes \pifu_\calb \qquad \text{and}
  \qquad \pift_{\Cala \boxtimes \calb} \cong \pift_\Cala \boxtimes \pift_\calb \,.
  \label{eq:7}
  \ee
(ii)\ The Nakayama functors of the opposite category of a finite linear category $\cala$ are
  \be
  \pifu_\Calaopp \cong (\pift_\Cala)^\opp \qquad \text{and} \qquad
  \pift_\Calaopp \cong (\pifu_\Cala)^\opp .
  \label{eq:8}
  \ee
\end{Proposition}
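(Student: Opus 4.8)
The plan is to treat the two parts separately, in each case reducing the statement to the basic (co)end manipulations established above.

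For part (i) I would start from the defining expressions $\pifu_{\cala\boti\calb}=\int_{c\in\cala\boti\calb}\Ho{\cala\boti\calb}c-\oti c$ and $\pift_{\cala\boti\calb}=\int^{c\in\cala\boti\calb}\Hox{\cala\boti\calb}-c\oti c$, observing that the integrand of the first is $\Phile(\ol c\boti c)$ and that of the second is $\Phire(\ol c\boti c)$. Since $\Phile$ and $\Phire$ are equivalences they are exact, so these bifunctors on $(\cala\boti\calb)^\opp\Times(\cala\boti\calb)$ are in particular left exact and right exact, respectively. Hence Lemma~\ref{lemma:coend-reF} applies: using \eqref{eq:end4le} for the end and \eqref{eq:coend4re} for the coend, I rewrite each (co)end over $\cala\boti\calb$ as an iterated (co)end over $\cala$ and $\calb$, so that e.g.
\[
\pifu_{\cala\boti\calb}\,\cong\int_{a\in\cala}\int_{b\in\calb}\Ho{\cala\boti\calb}{a\boti b}-\oti(a\boti b)\,.
\]

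Next I would rewrite the integrand by means of the multiplicativity of the Hom functor over the Deligne product, $\Ho{\cala\boti\calb}{a\boti b}{a'\boti b'}\cong\Ho\cala a{a'}\otik\Ho\calb b{b'}$, together with the fact that $\boti$ is $\vect$-bilinear (so that $v\otik w\oti(a\boti b)\cong(v\oti a)\boti(w\oti b)$), turning the integrand into $(\Ho\cala a-\oti a)\boti(\Ho\calb b-\oti b)$. It then remains to pull the Deligne product out of the two (co)ends, and this is the step I expect to require the most care: an (co)end over a finite linear category is not a finite (co)limit, so the interchange is not automatic. It is justified by the preservation results: for fixed second tensorand the functor $-\boti(\Ho\calb b-\oti b)\colon\cala\To\cala\boti\calb$ is exact, hence left exact, so it commutes with $\int_a$ by \eqref{eq:end-Fleft}, and symmetrically $\int_b$ commutes with $(\cdots)\boti-$; this yields $\pifu_{\cala\boti\calb}\cong\pifu_\cala\boti\pifu_\calb$, where $\pifu_\cala\boti\pifu_\calb$ is a well-defined left exact functor via the universal property of $\boti$ for left exact functors. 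The statement for $\pift$ is identical with ends replaced by coends, \eqref{eq:end-Fleft} replaced by the cocontinuity statement \eqref{eq:coend-Gright}, and ``left exact'' replaced by ``right exact'' throughout.

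For part (ii) I would show that the opposite-functor operation intertwines the two Nakayama functors. The cleanest route reuses the opposite-compatibility of the Eilenberg--Watts functors already exploited in the proof of Theorem~\ref{thm:triangle}(ii): under the identifications $\calaopp\boti\cala\simeq(\cala\boti\calaopp)^\opp$ and $\Funre(\cala,\cala)\simeq(\Funle(\calaopp,\calaopp))^\opp$ one has $(\Phile)^\opp\cong\Phire$ and $(\Psile)^\opp\cong\Psire$, and dually. Composing these identifications shows that $\Gammarl$ and $\Gammalr$ are intertwined by $(-)^\opp$, i.e.\ $(\Gammarl_\cala(F))^\opp\cong\Gammalr_{\calaopp}(F^\opp)$ for left exact $F$ and symmetrically for right exact $G$. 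Applying this to $F=\id_\cala$ and $G=\id_\cala$, and using $(\id_\cala)^\opp=\id_\calaopp$, gives $(\pift_\cala)^\opp=(\Gammarl_\cala(\id_\cala))^\opp\cong\Gammalr_\calaopp(\id_\calaopp)=\pifu_\calaopp$ and likewise $(\pifu_\cala)^\opp\cong\pift_\calaopp$, which are the two isomorphisms \eqref{eq:8}. Alternatively, and more explicitly, I could verify \eqref{eq:8} by direct (co)end computation: writing $(\pift_\cala)^\opp(\ol w)=\ol{\pift_\cala(w)}$, I would apply \eqref{eq:opp-coend=end} to convert the coend over $\cala$ defining $\pift_\cala$ into an end over $\calaopp$, use that the double dual of a \findim\ space is canonical to turn the dual Hom functor back into a Hom functor, and use $\Ho\calaopp{\ol c}{\ol d}\cong\Ho\cala dc$ to recognise the result as $\pifu_\calaopp$. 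In either approach the main obstacle is purely bookkeeping: one must track the variances and slots of the (dual) Hom functors across the passage to the opposite category and check that the dinatural families match under \eqref{eq:opp-coend=end}; once these identifications are set up consistently, the isomorphisms are natural by construction.
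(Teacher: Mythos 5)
Your proposal is correct and follows essentially the same route as the paper: part (i) is obtained exactly as in the paper by applying Lemma \ref{lemma:coend-reF} to decompose the (co)end over $\cala\boti\calb$ and then factorizing the integrand across the Deligne product, and part (ii) is proved in the paper by precisely the direct (co)end computation you offer as your ``alternative''. The only minor deviations are that the paper deduces the $\pift$-statement in (i) by taking left adjoints of the $\pifu$-statement (via Lemma \ref{lem:pifu-adjunction}) rather than repeating the coend computation, and that it obtains the second isomorphism in (ii) from the first by interchanging $\cala$ with $\calaopp$; both of your variants work equally well.
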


\begin{proof}
The claim (i) is seen by the computation 
  \be
  \bearll
  \pifu_{\Cala \boxtimes \calb} \!\!& \dsty
  \equiv \int_{x \in \cala \boxtimes \calb} \Ho{\cala \boxtimes \calb} x- \oti x
  \stackrel{\eqref{eq:end4le}}\cong\!
  \int_{a \in \cala} \int_{b \in \calb} \Ho{\cala \boxtimes \calb} {a \boti b} - \oti 
  (a \boti b)
  \Nxl2 & \dsty
  \cong \int_{a \in \cala} \Ho\cala a- \oti a \,\boxtimes \int_{b \in \calb} \Ho\calb b- \oti b
  \,=\, \pifu_\Cala \boxtimes \pifu_\calb \,.
  \eear
  \label{eq:10}
  \ee
The statement for $\pift_{\Cala \boxtimes \calb}$ follows by taking left adjoints in formula
\eqref{eq:10} which, when combined with Lemma \ref{lem:pifu-adjunction} implies that 
  \be
  \pift_{\Cala \boxtimes \calb}\cong (\pifu_{\Cala \boxtimes \calb})^\la
  \cong (\pifu_\Cala \boti \pifu_\calb )^\la
  \cong (\pifu_\Cala)^\la \boxtimes (\pifu_\calb)^\la \cong \pift_\Cala \boxtimes \pift_\calb \,.
  \ee
To show the first isomorphism in (ii), for $\ol a \iN \calaopp$ we compute
  \be
  \bearll
  \pifu_\Calaopp(\ol a) \!\!&\dsty = \int_{\ol c \in \calaopp} \Ho\calaopp c{\ol a} \oti \ol c
  \,\cong \int_{c \in \cala} \ol{ \Hox\cala {\ol a}c \oti c }
  \Nxl2 &\dsty 
  \cong \ol{ \int^{c\in\cala}\!\! \Hox\cala {\ol a}c \oti c } \,=\, (\pift_\Cala)^\opp(\ol a) \,.
  \eear
  \label{eq:11}
  \ee
The second isomorphism in (ii) follows from the first by interchanging $\cala$ with 
$\cala^{\opp}$.
\end{proof}

Under the equivalences \eqref{Phile...Psire} of the Deligne product of linear categories to 
the categories of 
left and right exact functors, the Nakayama functor has the following explicit description. 

\begin{Lemma}
\label{Lemma:Nak-endo-fun}
Let $\cala$ and $\calb$ be finite linear categories.
There are canonical isomorphisms
  \be
  \pifu_{\Funle(\cala,\calb)} (F) \,\cong\, \pifu_\calb \circ F \circ \pifu_\Cala \qquad 
  \text{for}\quad F \in \Funle(\cala,\calb) ~
  \label{eq:9}
  \ee
and
  \be
  \pift_{\Funre(\cala,\calb)} (G) \,\cong\, \pift_\calb \circ G \circ \pift_\Cala \qquad 
  \text{for}\quad G \in \Funre(\cala,\calb) \,.
  \label{eq:9a}
  \ee
\end{Lemma}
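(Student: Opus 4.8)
The plan is to prove \eqref{eq:9} by transporting both sides along the left exact Eilenberg--Watts equivalence $\Psile\colon\Funle(\cala,\calb)\To\calaopp\boti\calb$ and then comparing matrix elements; \eqref{eq:9a} will follow by the same argument with the dual Hom pairing. Write $\calm:=\calaopp\boti\calb$. Since the Nakayama functor is defined purely categorically (through $\Gammalr$), and since an equivalence is exact and preserves both Hom spaces and the ends entering that definition, $\Psile$ intertwines the two left exact Nakayama functors, i.e.\ $\Psile(\pifu_{\Funle(\cala,\calb)}(F))\cong\pifu_\calm(\Psile(F))$. As $\Phile$ is a quasi-inverse of $\Psile$, it then suffices to establish $\pifu_\calm(\Psile(F))\cong\Psile(\pifu_\calb\cir F\cir\pifu_\Cala)$ in $\calm$ (note that $\pifu_\calb\cir F\cir\pifu_\Cala$ is again left exact, so the right-hand side is defined). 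By the matrix-element description of Proposition \ref{prop:matrixelements}, together with the Yoneda lemma and the universal property of the Deligne product, an object of $\calm$ is determined by its pairings $\Ho\calm{\ol a\boti c}{-}$ against the factorized objects $\ol a\boti c$, so I would compute these for both sides.

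For the right-hand side Proposition \ref{prop:matrixelements} gives at once
\be
\Ho\calm{\ol a\boti c}{\Psile(\pifu_\calb\cir F\cir\pifu_\Cala)}
\,\cong\,\Ho\calb c{\pifu_\calb(F(\pifu_\Cala(a)))}\,.
\ee
For the left-hand side I would first use that $\pift_\calm$ is left adjoint to $\pifu_\calm$ (Lemma \ref{lem:pifu-adjunction}) in order to move the Nakayama functor onto the test object, and then feed in the behaviour of $\pift$ under Deligne products and opposites from Proposition \ref{proposition:Nak-boxtimes-opp}, namely $\pift_\calm\cong\pift_{\calaopp}\boxtimes\pift_\calb\cong(\pifu_\Cala)^\opp\boxtimes\pift_\calb$, so that $\pift_\calm(\ol a\boti c)\cong\ol{\pifu_\Cala(a)}\boti\pift_\calb(c)$. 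This produces the chain
\be
\bearll
\Ho\calm{\ol a\boti c}{\pifu_\calm(\Psile(F))}
&\cong\,\Ho\calm{\pift_\calm(\ol a\boti c)}{\Psile(F)}
\,\cong\,\Ho\calm{\ol{\pifu_\Cala(a)}\boti\pift_\calb(c)}{\Psile(F)}
\Nxl2 &
\cong\,\Ho\calb{\pift_\calb(c)}{F(\pifu_\Cala(a))}
\,\cong\,\Ho\calb c{\pifu_\calb(F(\pifu_\Cala(a)))}\,,
\eear
\ee
where the third isomorphism is again Proposition \ref{prop:matrixelements} and the last one is the adjunction $\pift_\calb\dashv\pifu_\calb$ of Lemma \ref{lem:pifu-adjunction}. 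The two computations agree naturally in $\ol a\boti c$, so the Yoneda lemma yields the claimed isomorphism.

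The only real obstacle is the bookkeeping in the step invoking Proposition \ref{proposition:Nak-boxtimes-opp}: one must track carefully how opposite categories interact with the two Nakayama functors, i.e.\ that passing to the adjoint $\pift_\calm$ of $\pifu_\calm$ and then splitting off the $\calaopp$-tensorand converts the right exact functor on that factor into the \emph{left} exact $\pifu_\Cala$, through $\pift_{\calaopp}\cong(\pifu_\Cala)^\opp$. Once the correct functor is attached to each tensorand, every isomorphism used is unconditional; in particular no exactness hypotheses on $F$ are needed, in contrast to Theorem \ref{thm:picu.F=Fll.pifu}. Finally, \eqref{eq:9a} follows by the entirely analogous computation carried out with $\Psire$ in place of $\Psile$, using the dual Hom pairing of Proposition \ref{prop:matrixelements} and reading the adjunction $\pift\dashv\pifu$ in the other direction; alternatively it can be deduced from \eqref{eq:9} by passing to adjoints.
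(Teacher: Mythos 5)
Your argument is correct and follows essentially the same route as the paper: both proofs transport the Nakayama functor of the functor category to $\calaopp\boti\calb$ via the Eilenberg--Watts equivalence (the paper uses $\Phile$ and Theorem \ref{thm:picu.F=Fll.pifu}, you use $\Psile$), decompose it there via Proposition \ref{proposition:Nak-boxtimes-opp}, and convert $\pift_\Cala$ into $\pifu_\Cala$ on the argument via the adjunction of Lemma \ref{lem:pifu-adjunction}. Your detour through the matrix elements $\Ho{\calaopp\boxtimes\calb}{\ol a\boti c}{-}$ of Proposition \ref{prop:matrixelements} is just a repackaging of the paper's direct evaluation of $\Phile\cir\pifu_{\Calaopp\boxtimes\calb}$ on factorized objects, and both versions rely in the same way on the universal property of the Deligne product to pass from factorized objects to all of $\calaopp\boti\calb$.
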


\begin{proof}
The equivalence $\Phile\colon \calaopp \boti \calb \To \Funle(\cala,\calb)$ meets the 
requirements of Theorem \ref{thm:picu.F=Fll.pifu}. It follows that there is a natural isomorphism
$\pifu_{\Funle(\cala,\calb)} \cir \Phile \,{\cong}\, \Phile \cir \pifu_{\Calaopp\boxtimes\calb}$.
By Proposition \ref{proposition:Nak-boxtimes-opp} we have $\pifu_{\Calaopp\boxtimes\calb} 
\,{\cong}\, (\pift_\Cala)^\opp \boti \pifu_\calb$.
 \\
For linear endofunctors $F$ of $\cala$ and $G$ of $\calb$ denote by $L_G$ and $R_F$
the linear endofunctors of $\Funle(\cala,\calb)$ given by post- and pre-composition
with $G$ and $F$, respectively. We compute 
  \be
  \bearll
  \Phile \circ \pifu_{\Calaopp\boxtimes\calb} (\ol a \boti b) \!\!&
  \cong  \Ho\cala {\pift_\Cala(a)}- \oti \pifu_\calb(b)
  \Nxl2 &
  \cong \Ho\cala a {\pifu_\Cala(-)} \oti \pifu_\calb(b)
  = L_{\pifu_\calb} \circ R_{\pifu_\Cala} \circ \Phile(\ol a \boti b) \,,
  \eear
  \label{eq:12}
  \ee
where the second isomorphism holds by Lemma \ref{lem:pifu-adjunction}.
This is isomorphic to $\pifu_{\Funle(\cala,\calb)} \cir \Phile(\ol{a} \boti b)$. Since 
$\Phile$ is an equivalence, we conclude that there are isomorphisms \eqref{eq:9}. 
 \\
 The isomorphisms \eqref{eq:9a} follow analogously by considering the equivalence $\Phire$.
\end{proof}


\medskip

A natural question is under which conditions the Nakayama functors $\pifu_\calx$ and
$\pift_\calx$ are equivalences. Consider first the case that $\calx$ is the category $A\Mod$
of \findim\ modules over some \findim\ algebra $A$. We have (see e.g.\ \cite[Prop.\,IV.3.1]{AUrs})

\begin{Lemma}\label{lem:self-injective}
The following properties of a \findim\ \ko-algebra $A$ are equivalent:
\\[2pt]
(i)\, The Nakayama functor \eqref{eq:piftA} and the functor \eqref{eq:pifuA} are quasi-inverse.
\\[2pt]
(ii)\, $A$ is injective as a left $A$-module, 
\\[2pt]
(iii)\, $A$ is injective as a right $A$-module, 
\\[2pt]
(iv)\, The class of (left or right) projective modules coincides with 
the class of (left or right) injective modules.
\end{Lemma}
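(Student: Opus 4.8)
The plan is to treat the purely module-theoretic equivalences (ii)\,$\Leftrightarrow$\,(iii)\,$\Leftrightarrow$\,(iv) as classical input, citing \cite{AUrs} (this is the left--right symmetry of self-injectivity for \findim\ algebras together with the coincidence of projectives and injectives), and to connect the categorical statement (i) to these using the tools already assembled. Concretely I would use the explicit descriptions $\pift_A \cong A^* \otA {-}$ and $\pifu_A \cong \Hom_A(A^*,-)$ from \eqref{eq:piftA++}, the adjunction $\pift_A \dashv \pifu_A$ of Lemma \ref{lem:pifu-adjunction}, the restriction property recorded after \eqref{eq:piftA++} (that $\pift_A$ and $\pifu_A$ restrict to quasi-inverse equivalences between projectives and injectives \cite{ivanS}), and the duality $(-)^*\colon A\Mod \To \moD A$. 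The two homological facts I would invoke repeatedly are that $\Hom_A(P,-)$ is exact iff $P$ is projective, and that $M \otA {-}$ is exact iff the right module $M$ is projective (flat equals projective in the \findim\ setting); and that the duality exchanges projectives and injectives with $({}_AA)^*\cong A^*_A$ and $(A^*)^*\cong A$, so that projectivity of the right module $A^*$ is equivalent to injectivity of ${}_AA$, and projectivity of the left module $A^*$ to injectivity of $A_A$.

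For the implication (i) $\Rightarrow$ (ii) and (iii) I would argue as follows. If $\pift_A$ and $\pifu_A$ are quasi-inverse, then both are equivalences and hence exact. Exactness of $\pift_A \cong A^* \otA {-}$ forces $A^*$ to be projective as a \emph{right} module, and exactness of $\pifu_A \cong \Hom_A(A^*,-)$ forces $A^*$ to be projective as a \emph{left} module. Translating these two projectivity statements through the duality $(-)^*$ yields injectivity of ${}_AA$ and of $A_A$, i.e.\ (ii) and (iii) respectively.

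For the converse (ii)\,$\&$\,(iii) $\Rightarrow$ (i) the idea is to \emph{propagate} the known quasi-inverse property from the subcategories of projectives and injectives to all of $A\Mod$. First, (ii) makes $A^*$ projective on the right, hence $\pift_A$ exact and $\pift_A \cir \pifu_A$ left exact; dually (iii) makes $A^*$ projective on the left, hence $\pifu_A$ exact and $\pifu_A \cir \pift_A$ right exact. The restriction property says the unit $\id \To \pifu_A \cir \pift_A$ is an isomorphism on projectives and the counit $\pift_A \cir \pifu_A \To \id$ is an isomorphism on injectives. I would then compare, via naturality, along a projective presentation $P_1 \To P_0 \To M \To 0$ for the unit (using right exactness of $\pifu_A\cir\pift_A$) and along an injective copresentation $0 \To M \To I^0 \To I^1$ for the counit (using left exactness of $\pift_A\cir\pifu_A$); in each case the isomorphism on the two outer terms forces it on $M$. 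Thus unit and counit are isomorphisms everywhere, so the adjunction $\pift_A \dashv \pifu_A$ is an adjoint equivalence, which is (i). Together with the cited equivalences among (ii), (iii), (iv), this closes the cycle, since (ii) alone already yields (iii) and conversely, and (iv) is equivalent to (ii)\,$\&$\,(iii).

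The only place where genuinely new (categorical) input is needed is the pair of implications involving (i), and I expect the main obstacle to be the exactness bookkeeping in the propagation step: one must check that the relevant \emph{composite} is right exact (for the unit) or left exact (for the counit) so that it preserves the chosen presentation or copresentation, and this is exactly what the one-sided projectivity of $A^*$ supplied by (ii) and (iii) guarantees. Once this is arranged, the remaining comparisons are routine diagram chases, and the translation between projectivity of $A^*$ and injectivity of $A$ through $(-)^*$ is the standard duality for \findim\ modules.
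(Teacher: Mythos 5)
Your argument is correct, but it is worth saying up front that the paper does not prove this lemma at all: it is stated with a bare citation to \cite[Prop.\,IV.3.1]{AUrs}, so any proof you give is by construction a ``different route.'' What you supply is precisely the bridge that the citation glosses over, namely the equivalence of the categorical statement (i) with the classical module-theoretic ones, and your two directions both check out. For (i)$\,\Rightarrow\,$(ii),(iii): an equivalence of abelian categories is exact, exactness of $A^*\otA-$ gives flatness, hence projectivity, of $A^*_A$, exactness of $\Hom_A(A^*,-)$ gives projectivity of ${}_AA^*$, and the duality $(-)^*$ translates these into injectivity of ${}_AA$ and $A_A$. For the converse, the exactness bookkeeping is exactly right: (iii) makes $\pifu_A$ exact so that $\pifu_A\cir\pift_A$ is right exact and the unit propagates from a projective presentation by the five lemma, while (ii) makes $\pift_A$ exact so that $\pift_A\cir\pifu_A$ is left exact and the counit propagates from an injective copresentation. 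The one point where you lean on slightly more than the paper literally records is the claim that the \emph{unit} is an isomorphism on projectives and the \emph{counit} on injectives; the paper only says the restrictions of the two functors to projectives and injectives are quasi-inverse equivalences. The stronger form is standard (and is what \cite{AUrs} and \cite{ivanS} actually prove), and it also follows from the weaker one by checking $\eta_A\colon A\To\Hom_A(A^*,A^*\otA A)\,{\cong}\,\End_A(A^*)\,{\cong}\,A$ directly and extending to summands of free modules by additivity and naturality; a sentence to that effect would make the argument fully self-contained. What your route buys over the paper's is a proof that uses only the tools already assembled in the surrounding text (the adjunction $\pift_A\dashv\pifu_A$, the explicit formulas \eqref{eq:piftA++}, and the restriction to projectives and injectives), at the cost of still importing the purely classical equivalences (ii)$\,\Leftrightarrow\,$(iii)$\,\Leftrightarrow\,$(iv) from the literature, which is a reasonable division of labour.
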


An algebra with any (and thus all) of these properties is called \emph{self-injective}.
For self-injective $A$ the functors \eqref{eq:piftA} and \eqref{eq:pifuA} are in
particular exact and there are natural isomorphisms \cite[Thm.\,3.3]{ivanS}
  \be
  \pift_A \cong \HO{\!A} { \Ho{\!A} {A^*_{}} A} -  \qquad {\rm and} \qquad
  \pifu_A \cong \Ho{\!A} {A^*_{}} A \otimes_{\!A} - \,,
  \ee
where $\Ho{\!A} {A^*_{}} A$ is seen as a bimodule and hence 
$\HO{\!A} { \Ho{\!A} {A^*_{}} A} {{}_Am}$ as a left module.

If $A$ has a structure of Frobenius algebra, then it is in particular self-injective.
The Nakayama automorphism of a Frobenius algebra $A$ with Frobenius form $\kappa$ is the
automorphism $\nu$ of $A$ defined by the equalities 
$\kappa(\alpha,\nu(\beta)) \eq \kappa(\beta,\alpha)$ for all $\alpha,\beta \iN A$.
The functors $\pift_A$ and $\pifu_A$ amount to twisting the action of $A$ on a
module by the Nakayama automorphism $\nu$ of $A$ and by $\nu^{-1}$,
respectively. It follows that if $A$ is Frobenius, then $A^*$ and $\Ho{\!A} {A^*_{}} A$
are isomorphic as bimodules to the bimodules obtained from the regular bimodule ${}_A A_A$ 
by twisting the right and left $A$-action, respectively,
by the Nakayama automorphism \cite[Cor.\,3.4]{ivanS}.
A Frobenius algebra is symmetric iff the Nakayama automorphism is inner, and thus
iff $\Awee$ and $A$ are isomorphic as bimodules, and iff the Nakayama functor is isomorphic
to the identity functor.

The properties of being self-injective and of having the structure of a symmetric Frobenius 
algebra are Morita invariant. 
Accordingly, following a suggestion by Shimizu \cite{shimiP} we give

\begin{Definition}\label{Def:symFrobcat}
(i)\, A finite linear category is called \emph{self-injective} iff it is equivalent 
as a linear category to the category of modules over a self-injective \ko-algebra. 
\\[2pt]
(ii)\, A finite linear category is called \emph{symmetric Frobenius} iff it is equivalent 
as a linear category to the category of modules over a symmetric Frobenius \ko-algebra. 
\end{Definition}

Combining the previous statements with Lemma \ref{lem:Pi=A*} we conclude:

\begin{Proposition}\label{prop:Nakayama-equiv}
Let $\calx$ be a finite linear category.
\\[2pt]
(i)\, The Nakayama functors $\pifu_\calx$ and $\pift_\calx$ are equivalences iff 
$\calx$ is self-injective. In this case they are quasi-inverse to each other.
\\[2pt]
(ii)\, $\pifu_\calx$ and $\pift_\calx$ are isomorphic to the identity functor iff 
$\calx$ is symmetric Frobenius.
\end{Proposition}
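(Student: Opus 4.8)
The plan is to reduce both claims to known algebraic facts by fixing an equivalence $\calx \simeq A\Mod$ and transporting each statement to the algebra $A$, invoking Morita invariance so that the conclusion is independent of this choice. The first step is to identify the two categorical functors with their algebraic counterparts under such an equivalence. By Lemma~\ref{lem:Pi=A*} together with \eqref{eq:piftA++}, the Nakayama functor $\pift_\calx$ is carried to $\pift_A \cong \Awee \otA {-}$. Since $\pift_\calx$ is left adjoint to $\pifu_\calx$ by Lemma~\ref{lem:pifu-adjunction}, while $\Awee \otA {-}$ is left adjoint to $\Ho{\!A}{\Awee}{-} \cong \pifu_A$ by the tensor-hom adjunction, transporting the adjunction along the equivalence and using uniqueness of right adjoints shows that $\pifu_\calx$ is carried to $\pifu_A$. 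Thus under $\calx \simeq A\Mod$ the pair $(\pift_\calx,\pifu_\calx)$ corresponds to $(\pift_A,\pifu_A)$.

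For part~(i) I would use the general fact that, in the adjunction $\pift_\calx \dashv \pifu_\calx$, the left adjoint is an equivalence if and only if the right adjoint is, if and only if the two are mutually quasi-inverse (equivalently, the unit and counit are both isomorphisms). Hence the assertion that $\pift_\calx$, or equivalently $\pifu_\calx$, is an equivalence is the same as the statement that $\pift_A$ and $\pifu_A$ are quasi-inverse, which by Lemma~\ref{lem:self-injective} holds precisely when $A$ is self-injective. As self-injectivity is Morita invariant, this property does not depend on the chosen $A$, so by Definition~\ref{Def:symFrobcat}(i) it is equivalent to $\calx$ being self-injective; the same chain of equivalences yields the final claim that in this case $\pift_\calx$ and $\pifu_\calx$ are quasi-inverse to each other.

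For part~(ii) I would observe that $\pift_\calx \cong \id_\calx$ corresponds, under $\calx \simeq A\Mod$, to $\Awee \otA {-} \cong \id_{A\Mod} = A \otA {-}$, which by part~(R3) of Lemma~\ref{lem:shimi7:2.6} holds if and only if $\Awee \cong A$ as $A$-bimodules, i.e.\ if and only if $A$ is symmetric Frobenius (as recalled in the discussion preceding the Proposition). By Morita invariance and Definition~\ref{Def:symFrobcat}(ii) this is equivalent to $\calx$ being symmetric Frobenius. The corresponding statement for $\pifu_\calx$ is then automatic: the right adjoint of $\id_\calx$ is $\id_\calx$, so $\pift_\calx \cong \id_\calx$ forces $\pifu_\calx \cong \id_\calx$, and conversely.

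The main obstacle will be the careful bookkeeping that ties the abstract functors to a chosen algebra, rather than any deep difficulty: one must verify that $\pifu_\calx$ really transports to $\pifu_A$ (and not merely that $\pift_\calx$ transports to $\pift_A$), which is where uniqueness of adjoints enters, and one must invoke Morita invariance of self-injectivity and of the symmetric Frobenius property so that the algebra-level dichotomies descend to genuinely categorical statements independent of the presentation $\calx \simeq A\Mod$. Once these identifications are in place, both equivalences follow immediately from the cited lemmas.
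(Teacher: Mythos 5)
Your proposal is correct and follows essentially the same route as the paper, which states the result as a direct consequence of Lemma~\ref{lem:Pi=A*}, Lemma~\ref{lem:self-injective}, the preceding discussion of symmetric Frobenius algebras, and Morita invariance. The one point you make explicit that the paper leaves implicit is the identification of $\pifu_\calx$ with $\pifu_A$ via uniqueness of right adjoints (the paper only proves the transport statement for $\pift_\calx$ in Lemma~\ref{lem:Pi=A*}), and this is a legitimate and welcome addition rather than a deviation.
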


In fact, structures of a symmetric Frobenius algebra on an algebra $A$ over a field
are in bijection with bimodule isomorphisms from $A$ to $A^\wee$. Thus
for for $\calx \,{\simeq}\, A\Mod$
there is a bijection between isomorphisms from $\pifu_\calx$ (or $\pift_\calx$)
to $\id_\calx$ and structures of a symmetric Frobenius algebra on $A$.
It is thus tempting to call, for $\calx$ a finite linear category,
an isomorphism between (say) $\pift_\calx$ and the identity functor a 
\emph{symmetric Frobenius structure} on $\calx$.


\subsection{Nakayama functors and dualities} 
\label{subsec:Nakayama-adjunction}

Taking linear categories as objects, either right or left exact linear functors between 
them as 1-morphisms, and the Deligne product as a symmetric monoidal structure, 
one obtains two monoidal 2-ca\-te\-gories $\Funre$ ($\Funle$) of linear categories, right 
(left) exact linear functors and natural transformations.
(The restriction to either left or right exact functors is required because
the Deligne product of a left and a right exact functor is not defined, in general.)
By taking isomorphism classes of functors these 2-ca\-te\-gories give two monoidal categories
$h\Funre$ and $h\Funle$.
In the sequel we show that a linear category and its opposite are dual both in $h\Funre$ 
and in $h\Funle$ and express the Nakayama functors as combinations of the corresponding 
duality structures in $\Funre$ and $\Funle$. 
Thus for any finite linear category $\calm$ we consider the pair
  \be
  \begin{array}{rcl}
  b_\calm: ~~ \vect &\!\!\!\longrightarrow\!\!\!& \calm \boti \calmopp
  \Nxl1
  v &\!\!\!\longmapsto\!\!\!& v \,\,{\otimes} \int_{m\in\calm} m \boti \ol m 
  \eear \quad{\rm and}\quad \begin{array}{rcl}
  d_\calm: ~~ \calmopp \boti \calm &\!\!\!\longrightarrow\!\!\!& \vect
  \Nxl1
  \ol m \boti m' &\!\!\!\longmapsto\!\!\!& \Hox\calm {m'}m
  \eear
  \label{eq:bMdM}
  \ee
of functors, and similarly the pair
  \be
  \begin{array}{rcl}
  \tilde b_\calm: ~~ \vect &\!\!\!\longrightarrow\!\!\!& \calmopp \boti \calm
  \Nxl1
  v &\!\!\!\longmapsto\!\!\!& v \,\,{\otimes} \int^{m\in\calm} \ol m \boti m 
  \eear \quad{\rm and}\quad \begin{array}{rcl}
  \tilde d_\calm: ~~ \calm \boti \calmopp &\!\!\!\longrightarrow\!\!\!& \vect
  \Nxl1
  m \boti \ol{m'} &\!\!\!\longmapsto\!\!\!& \Ho\calm {m'}m \,.
  \eear
  \label{eq:tildebMdM}
  \ee
Being functors from \vect\ to a linear category, $b_\calm$ and $\tilde b_\calm$ are exact,
while by definition $d_\calm$ is right exact and $\tilde d_\calm$ is left exact.
 
We can express the functors $\pifu_\calm$ and $\pift_\calm$ in terms of these functors as
follows. Denote by $\sigma_{\calm,\caln}\colon m \boti n \,{\mapsto}\, n \boti m$ the 
symmetry of the Deligne product $\calm \boti \caln \,{\cong}\, \caln \boti \calm$
and abbreviate $\sigma_\calm \,{:=}\, \sigma_{\calm,\calmopp}$. Then
  \be
  \pifu_\calm =
  (\tilde d_\calm \boti \id_\calm) \circ [ \id_\calm \boti (\sigma_\calm \cir b_\calm)]
  \label{eq:pifu-vis-evcoev}
  \ee
and
  \be
  \pift_\calm
  = (\id_\calm \boti d_\calm) \circ [ (\sigma_\calmopp \cir \tilde b_\calm) \boti \id_\calm ] \,.
  \label{eq:pift-vis-evcoev}
  \ee

\begin{Lemma}
The functor $\tilde d_\calm$ is right adjoint to $b_\calm$, and 
$d_\calm$ is left adjoint to $\tilde b_\calm$.
\end{Lemma}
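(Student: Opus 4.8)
The plan is to verify each of the two adjunctions directly, via the defining natural isomorphism of Hom spaces, in each case stripping off the $\vect$-variable and then reducing the remaining comparison to $\boxtimes$-factorized objects, where the matrix-element identities of Proposition \ref{prop:matrixelements} apply. I would treat $b_\calm \dashv \tilde d_\calm$ first; the statement $d_\calm \dashv \tilde b_\calm$ is then handled by the dual bookkeeping (ends $\leftrightarrow$ coends, $\Ho\calm{}{}\leftrightarrow\Hox\calm{}{}$). Since $b_\calm(v)\cong v\otimes b_\calm(\ko)$, the module identity \eqref{Hom(av,a'v')} gives $\Ho{\calm\boti\calmopp}{b_\calm(v)}{W}\cong\Ho\vect{v}{\Ho{\calm\boti\calmopp}{b_\calm(\ko)}{W}}$ naturally in $v$, so it suffices to produce a natural isomorphism $\Ho{\calm\boti\calmopp}{b_\calm(\ko)}{-}\cong\tilde d_\calm$ of functors $\calm\boti\calmopp\To\vect$. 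Both are left exact — the former as a covariant Hom out of the fixed object $b_\calm(\ko)$, the latter by construction — so by the universal property of the Deligne product (for left exact functors) it is enough to compare them, naturally, on factorized objects $m\boti\ol{m'}$.

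The key move for the first adjunction is to rewrite $b_\calm(\ko)=\int_{n}n\boti\ol n$ in a form to which the matrix-element corollary applies: the symmetry $\sigma_\calm$ is an (exact) equivalence and so commutes with the end, whence $\sigma_\calm\big(b_\calm(\ko)\big)=\int_{n}\ol n\boti n=\Psire(\id_\calm)$. Transporting Hom along $\sigma_\calm$ and invoking the corollary to Proposition \ref{prop:matrixelements} with $\cala=\calc=\calm$ and $G=\id_\calm$ then yields $\Ho{\calm\boti\calmopp}{b_\calm(\ko)}{m\boti\ol{m'}}\cong\Ho{\calmopp\boti\calm}{\Psire(\id_\calm)}{\ol{m'}\boti m}\cong\Ho\calm{m'}{m}=\tilde d_\calm(m\boti\ol{m'})$, naturally in $m,m'$. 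This settles $b_\calm\dashv\tilde d_\calm$.

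For $d_\calm\dashv\tilde b_\calm$ I would run the mirror-image argument. Stripping the $\vect$-variable from $\tilde b_\calm(v)\cong v\otimes\tilde b_\calm(\ko)$ reduces the adjunction isomorphism $\Ho\vect{d_\calm(W)}{v}\cong\Ho{\calmopp\boti\calm}{W}{\tilde b_\calm(v)}$ to a natural isomorphism $\Ho{\calmopp\boti\calm}{-}{\tilde b_\calm(\ko)}\cong\big(d_\calm(-)\big)^\wee$ of contravariant left exact functors, which is again checkable on factorized objects $\ol a\boti c$ by the universal property. Here $\tilde b_\calm(\ko)=\Psile(\id_\calm)=\int^{n}\ol n\boti n$, so the other matrix-element identity of the same corollary gives $\Ho{\calmopp\boti\calm}{\ol a\boti c}{\tilde b_\calm(\ko)}\cong\Ho\calm c a$, while $d_\calm(\ol a\boti c)=\Hox\calm c a=(\Ho\calm c a)^\wee$ and hence $\big(d_\calm(\ol a\boti c)\big)^\wee\cong\Ho\calm c a$ by finite-dimensionality of the morphism spaces; the two sides agree.

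The bookkeeping above is routine once the conventions are fixed; the point I expect to require the most care — and which I regard as the main obstacle — is the passage from an isomorphism verified only on $\boxtimes$-factorized objects to the genuine natural isomorphism of functors. This is exactly where the exactness of both sides (left exactness of $\tilde d_\calm$ and of the covariant Hom functor in the first case, contravariant left exactness of $\Ho{\calmopp\boti\calm}{-}{\tilde b_\calm(\ko)}$ and of $(d_\calm(-))^\wee$ in the second) together with the universal property of the Deligne product are indispensable; one must also note that naturality in the $\vect$-variable is automatic, since after the reduction both sides have the form $\Ho\vect{v}{-}$, resp. $(-)\otik v$, so that full naturality in both arguments follows.
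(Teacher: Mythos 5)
Your proposal is correct and follows essentially the same route as the paper: both arguments strip off the $\vect$-variable via the module structure \eqref{Hom(av,a'v')}, identify $\sigma_\calm\cir b_\calm(\ko)$ with $\Psire(\id_\calm)$ and $\tilde b_\calm(\ko)$ with $\Psile(\id_\calm)$, and then apply the matrix-element identities \eqref{homAoppC-coend} and \eqref{homAoppC-end} on $\boxtimes$-factorized objects. The only difference is presentational -- the paper computes the a priori existing adjoints $b_\calm^\ra$ and $\tilde b_\calm^\la$ and identifies them with $\tilde d_\calm$ and $d_\calm$, whereas you verify the Hom-isomorphism directly and make explicit the (left-exactness plus universal property of the Deligne product) argument needed to pass from factorized objects to all objects, a step the paper leaves implicit.
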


\begin{proof}
By definition we have
  \be
  \tilde b_\calm \,=\, - \otimes \Psile(\id_\calm) \qquad{\rm and}\qquad
  \sigma_\calm \cir b_\calm \,\cong\, - \otimes \Psire(\id_\calm) \,.
  \ee
For any $v \iN \vect$, $x\iN\calm$ and $\ol y \iN \calmopp$ the right adjoint
$b_\calm^\ra$ satisfies
  \be
  \bearll
  v^\wee \otik b_\calm^\ra (x \boti \ol y) \!\!&
  \cong \Ho\ko v {(b_\calm^\ra \cir \sigma_\calmopp) (\ol y \boti x)}
  \cong \Ho {\calmopp\boxtimes\calm} {\sigma_\calm \cir b_\calm(v)} {\ol y \boti x}
  \Nxl1 &
  \cong v^\wee \otik \Ho {\calmopp\boxtimes\calm} {\Psire(\id_\calm)} {\ol y \boti x}
  \!\stackrel{\eqref{PsirG,ac=Ga,c}}\cong\! v^\wee \otik \Ho\calm yx \,.
  \eear
  \ee
Thus $b_\calm^\ra(x \boti \ol y) \,{\cong}\, \Ho\calm yx \eq \tilde d_\calm(x \boti \ol y)$.
Similarly, for $\tilde b_\calm^\la$ we have
  \be
  \bearll
  v \otik \big( \tilde b_\calm^\la(\ol y \boti x) \big)^\wee \!\!&
  \cong \Ho\ko {\tilde b_\calm^\la(\ol y \boti x)} v
  \Nxl1 &
  \cong v \otik \Ho {\calmopp\boxtimes\calm} {\ol y \boti x} {\Psile(\id_\calm)}
  \!\stackrel{\eqref{ac,PsilF=c,Fa}}\cong\! v \otik \Ho\calm xy
  \eear
  \ee
and thus $\tilde b_\calm^\la(\ol y \boti x) \,{\cong}\, \Hox\calm xy \eq d_\calm(\ol y \boti x)$.
\end{proof}

Similar calculations show that the other adjoints of $b$ and $\tilde b$ are given by
  \be
  \tilde b_\calm^\ra(\ol y \boti x) \,\cong \int_{m\in\calm} \Ho\calm ym \otik \Ho\calm mx
  \ee
and
  \be
  b_\calm^\la(x \boti \ol y) \,\cong\, \big(\!\int_{m\in\calm} \Ho\calm xm \otik \Ho\calm my\big)^* .
  \ee

\begin{Remark}
The functors \eqref{eq:bMdM} and \eqref{eq:tildebMdM} satisfy zigzag identities up to 
natural isomorphisms. In the case of \eqref{eq:bMdM} we have natural isomorphisms
  \be
  \begin{array}{rl}
  (\id_\calm \boti d_\calm) \circ (b_\calm \boti \id_\calm): \quad m' \,\longmapsto 
  \!\!\!&\dsty
  \int_{m\in\calm} m \oti \Hox\calm {m'}m \,\cong\, m'
  \qquad {\rm and}
  \Nxl2 \dsty
  (d_\calm \boti \id_{\calm^\opp_{}}) \circ (\id_{\calm^\opp_{}} \boti b_\calm): \quad
  \ol{m'} \,\longmapsto 
  \!\!\!&\dsty
  \int_{m\in\calm} \Hox\calm m{m'} \oti \ol m 
  \Nxl2 = &\dsty
  \!\! \int_{\ol m\in\calmopp} \Hox\calmopp {\ol{m'}} {\ol m} \oti \ol m \,\cong\, \ol{m'} \,, 
  \eear
  \label{eq:zigzag}
  \ee
where in both cases we use the \delt\ property 
(as well as \eqref{eq:end-Fleft} applied to the Deligne product, which is an exact functor).
The calculation for the functors \eqref{eq:tildebMdM} is similar.
In view of these identities one may wish to think about the functors \eqref{eq:bMdM} 
as furnishing a notion of duality for the functor category $\Funre(\calm,\caln)$
(and analogously \eqref{eq:tildebMdM} for $\Funle(\calm,\caln)$). 
Accordingly one may put
  \be
  G^* := 
  (d_\caln \boti \id_{\calm^\opp_{}}) \circ (\id_{\caln^\opp_{}} \boti G \boti \id_{\calm^\opp_{}})
  \circ (\id_{\caln^\opp_{}} \boti b_\calm): \quad \caln^\opp \To \calm^\opp 
  \label{eq:G*}
  \ee
for any $G \iN \Fun(\calm,\caln)$; this gives
$G^*(\ol n) \eq \int_{m\in\calm} \Hox\caln {G(m)}n \oti \ol m$.
If $G$ is right exact, then it has a right adjoint $G^\ra$, and by direct calculation one
concludes that
$G^* \,{\cong}\, (G^\ra)^\opp_{}$.
Note that $G^\ra$ is left exact, so its opposite ${G^\ra}^\opp$ is right exact,
hence the mapping \eqref{eq:G*} sends right exact functors to right exact ones.
Further, noting that
$(G^\ra)^\opp_{} \,{\cong}\, ( G^\opp_{})^\la$, it follows in particular that 
  \be
  G^{**} \cong \big( \big( (G^\ra)^\opp_{} \big)^\ra \big)^\opp
  \cong \big( \big( (G^\opp_{})^\la_{} \big)^\ra \big)^\opp
  \cong \big(  G^\opp_{} \big)^\opp \cong G \,.
  \label{eq:G**=G}
  \ee
\end{Remark}


\section{Eilenberg-Watts calculus for module categories}

\subsection{Module categories}

We now study the compatibility between the constructions considered above and the structure
of a module category over a finite tensor category on the finite linear category in question.
Recall that a (left) module category over a finite tensor category $\cala$ (or, for short,
an $\cala$-module), is a finite linear category $\calm \eq \acalm$ together with a bilinear
functor, exact in the first variable, from $\cala\Times\calm$ to $\calm$, which 
we call the action of $\cala$ and simply denote by a dot, as well as natural isomorphisms 
$\mu$ and $\lambda$ with components $\mu_{a,b,m}\iN \Ho\calm {(a\oti b).m} {a.(b.m)}$ and
$\lambda_m \iN \Ho\calm {\one_\Cala .m} m$ that satisfy pentagon and triangle relations
analogous to the associator and unit constraint of a monoidal category.
Right $\cala$-modules and $\cala$-$\calb$-bimodules are defined analogously.

Our conventions concerning dualities of a rigid category $\calc$ are as follows. The right dual
of an object $c$ is denoted as $c^\vee$, and the right evaluation and coevaluation are morphisms
  \be
  \evr_c \in \Ho\calc {c^\vee \otimes c}{\one} \qquad{\rm and}\qquad
  \coevr_c \in \Ho\calc \one {c \otimes c^\vee} \,,
  \ee
while the left evaluation and coevaluation are
  \be
  \evl_c \in \Ho\calc {c \otimes \Vee c}{\one} \qquad{\rm and}\qquad
  \coevl_c \in \Ho\calc \one {\Vee c \otimes c} 
  \ee
with $\Vee c$ the left dual of $c$.

In view of their importance in some of the calculations below, we state the behavior of
the action with respect to dualities explicitly:

\begin{Lemma}\label{lem:duals-Homcalm}
Let $\cala$ be a rigid monoidal category, and let $\calm \eq \acalm$ and $\caln \eq \calna$
be left and right $\cala$-modules, respectively. Then we have  natural isomorphisms
  \be
  \Ho\calm {a.m} {m'} \cong \Ho\calm m {\Vee a.m'}
  \label{eq:a.m--Veea.m'}
  \ee
for all $a\iN\cala$ and all $m,\,m'\iN\calm$, and
  \be
  \Ho\caln {n.a} {n'} \cong \Ho\caln n {n'.a^\vee}
  \label{eq:n.a--n'.avee}
  \ee
for all $a\iN\cala$ and all $n,\,n'\iN\caln$.
\end{Lemma}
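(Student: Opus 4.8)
The plan is to read both isomorphisms as adjunction isomorphisms produced by the rigidity of $\cala$. For the first one, I claim that the endofunctor $a.({-})\colon \calm\To\calm$ is left adjoint to $\Vee a.({-})\colon\calm\To\calm$, the asserted natural isomorphism being the resulting bijection on hom-spaces. Morally this is automatic: the module structure makes $a\mapsto a.({-})$ a monoidal functor from $\cala$ into the monoidal category of linear endofunctors of $\calm$ (with composition as tensor product), and such a functor must take the left-dual pair $(\Vee a,a)$ to a pair of adjoint functors. Rather than invoke this abstractly, however, I would exhibit the unit and counit explicitly, so that coherence and naturality can be checked by hand.

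Concretely, for $f\iN\Ho\calm{a.m}{m'}$ I would set $\tilde f\iN\Ho\calm m{\Vee a.m'}$ to be the composite
  \be
  m \xrightarrow{~\lambda_m^{-1}~} \one.m
   \xrightarrow{~\coevl_a.\id_m~} (\Vee a\oti a).m
   \xrightarrow{~\mu_{\Vee a,a,m}~} \Vee a.(a.m)
   \xrightarrow{~\id_{\Vee a}.f~} \Vee a.m' \,,
  \ee
and conversely, for $g\iN\Ho\calm m{\Vee a.m'}$, I would set $\hat g\iN\Ho\calm{a.m}{m'}$ to be
  \be
  a.m \xrightarrow{~\id_a.g~} a.(\Vee a.m')
   \xrightarrow{~\mu_{a,\Vee a,m'}^{-1}~} (a\oti \Vee a).m'
   \xrightarrow{~\evl_a.\id_{m'}~} \one.m'
   \xrightarrow{~\lambda_{m'}~} m' \,.
  \ee
Here I use that the action is functorial in its first variable, so that the duality morphisms $\coevl_a$ and $\evl_a$ of $\cala$ act on objects of $\calm$, together with the constraints $\mu$ and $\lambda$ to pass between $(\Vee a\oti a).m$ and $\Vee a.(a.m)$ and between $\one.m$ and $m$.

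It then remains to verify that $f\mapsto\tilde f$ and $g\mapsto\hat g$ are mutually inverse and natural. Both equalities $\widehat{\tilde f}\eq f$ and $\widetilde{\hat g}\eq g$ reduce, after the constraints have been cancelled by naturality of $\mu$ and $\lambda$, to the two zigzag identities for the left-duality morphisms $(\evl_a,\coevl_a)$ in $\cala$; naturality in $m$ and $m'$ follows from naturality of $\mu$ and $\lambda$ and functoriality of the action. I expect the only real difficulty to be the coherence bookkeeping: reassociating the constraints introduced by $\mu$ so that the zigzag pattern becomes visible forces one to invoke the pentagon and triangle axioms of the module category, so the coherence morphisms must be tracked carefully even though each rewriting is forced. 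The second isomorphism, for a right module $\caln\eq\calna$, is obtained by the mirror-symmetric argument, in which $a.({-})$ is replaced by $({-}).a$ and the left-duality data $(\evl_a,\coevl_a)$ by the right-duality data $(\evr_a,\coevr_a)$; equivalently, it is the statement that $({-}).a$ is left adjoint to $({-}).a^\vee$.
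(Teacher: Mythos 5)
Your proposal is correct and follows essentially the same route as the paper: the paper's proof exhibits exactly the same two maps $f\mapsto(\id_{\Vee a}.f)\circ\mu_{\Vee a,a,m}\circ(\coevl_a.\id_m)$ and $g\mapsto(\evl_a.\id_{m'})\circ\mu_{a,\Vee a,m'}^{-1}\circ(\id_a.g)$ (merely suppressing the unit constraint $\lambda$ that you write out) and checks that they are mutually inverse via the zigzag identities and naturality of the mixed associator. The adjunction reading you give at the outset is also exactly the paper's Remark \ref{Remark:adjoint-action} following the lemma.
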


\begin{proof}
This is seen in the same way as in the case of a (not necessarily strict) tensor product:
The defining properties of the left duality and the naturality of the mixed associator $\mu$
directly imply that the linear maps
  \be
  \Ho\calm {a.m} {m'} \ni~ f \,\longmapsto\, 
  (\id_{\Vee_{}a} \,.\,f) \circ \mu_{\Vee_{}a,a,m}^{} \circ (\coevl_a \,.\, \id_m)
  ~\in \Ho\calm m {\Vee a.m'}
  \ee
and
  \be
  \Ho\calm m {\Vee a.m'} \ni~ g \,\longmapsto\,
  (\evl_a \,.\, \id_{m'}) \circ \mu_{a,\Vee_{}a,m'}^{-1} \circ (\id_a \,.\, g) 
  ~\in \Ho\calm {a.m} {m'} 
  \ee
are mutually inverse, thus establishing \eqref{eq:a.m--Veea.m'}.
The validity of \eqref{eq:n.a--n'.avee} follows analogously, now using the properties
of the right duality.
\end{proof}

\begin{Remark}\label{Remark:adjoint-action}
The left action with a fixed object $a \iN \cala$ on an $\cala$-module $\acalm$ defines 
a linear en\-do\-functor 
  \be
  F_{a}: \quad m \longmapsto a\,.\,m 
  \label{eq:left-action-a}
  \ee
of $\calm$. According to Lemma \ref{lem:duals-Homcalm} the functor $F_{^{\vee\!}a}$ 
is right adjoint to $F_a$. Moreover, reading the formula 
         \eqref{eq:a.m--Veea.m'} 
backwards shows that $F_a$ also
has a left adjoint, namely $F_{a^{\vee}}$, and thus the en\-do\-functor $F_a$ is exact.
Similarly, the right action with $a$ on $\calna$ defines an exact endofunctor $G_{a}$ 
which has $G_{a^{\vee}}$ and $G_{^{\vee\!}a}$ as a right and left adjoint, respectively.
\end{Remark}

Combining the observation in this remark with Lemma \ref{lem:adj4(co)ends}
yields the following result for (co)ends taken over module categories:

\begin{cor}\label{cor:Action-boxtimes}
Let $\calm$ be a $\cala$-$\calb$-bimodule category over finite tensor categories.
Then we have
  \be
  \int_{m\in\calm} \ol m \boti a.m.b
  \,\cong \int_{m\in\calm} \ol{{}^{\vee\!}a.m.b^\vee} \boti m 
  \label{eq:Action-boxtimes}
  \ee
as well as
  \be
  \int^{m\in\calm}\! \ol m \boti a.m.b 
  \,\cong \int^{m\in\calm} \ol{{a^{\vee\!}.m.{}^{\vee\!}b}} \boti m 
  \label{eq:Action-boxtimes2}
  \ee
naturally in $a\iN\cala$ and $b\iN\calb$
and coherently with respect to the monoidal structures of $\cala$ and $\calb$.
\end{cor}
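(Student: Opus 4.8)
The plan is to recognize both integrands as the image, under the Eilenberg-Watts functors $\Psire$ and $\Psile$, of the ``two-sided action'' endofunctor, and then to read off the two claimed rewritings directly from the adjunction formulas of Lemma \ref{lem:adj4(co)ends}. Concretely, for fixed $a\iN\cala$ and $b\iN\calb$ I would introduce the endofunctor $H_{a,b}\colon\calm\To\calm$, $m\mapsto a.m.b$, which is the composite $F_a\cir G_b$ of the left- and right-action functors of Remark \ref{Remark:adjoint-action}. Since both $F_a$ and $G_b$ are exact there, $H_{a,b}$ is exact as well, so in particular it lies in both $\Funre(\calm,\calm)$ and $\Funle(\calm,\calm)$ and may be fed into either formula of Lemma \ref{lem:adj4(co)ends}.

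First I would compute the adjoints of $H_{a,b}$ from those recorded in Remark \ref{Remark:adjoint-action}: since the right adjoint of $F_a$ is $F_{\Vee a}$ and that of $G_b$ is $G_{b^\vee}$, the composition rule $(F_a\cir G_b)^\ra\cong G_b^\ra\cir F_a^\ra$ gives $H_{a,b}^\ra\colon m\mapsto \Vee a.m.b^\vee$; dually, using the left adjoints $F_{a^\vee}$ and $G_{\Vee b}$, one finds $H_{a,b}^\la\colon m\mapsto a^\vee.m.\Vee b$. With these in hand the two isomorphisms are immediate. For the end \eqref{eq:Action-boxtimes} I would set $G\eq H_{a,b}$ in \eqref{eq:end-vs-Gr}, so that $\int_{m}\ol m\boti a.m.b=\Psire(H_{a,b})\cong\int_{m}\ol{H_{a,b}^\ra(m)}\boti m$; for the coend \eqref{eq:Action-boxtimes2} I would set $F\eq H_{a,b}$ in \eqref{eq:coend-vs-Fl}, giving $\int^{m}\ol m\boti a.m.b=\Psile(H_{a,b})\cong\int^{m}\ol{H_{a,b}^\la(m)}\boti m$. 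Substituting the formulas for the adjoints yields exactly the right-hand sides of the corollary.

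The algebra of adjoints above is routine; the point that needs the most care is the last clause of the statement, namely naturality in $a,b$ together with coherence for the monoidal structures. Naturality should come essentially for free: the assignment $(a,b)\mapsto H_{a,b}$ is functorial, and the isomorphisms of Lemma \ref{lem:adj4(co)ends} are natural in the functor argument. The main obstacle is the monoidal coherence. Here I would invoke the coherence clause of Lemma \ref{lem:adj4(co)ends} (which asserts that the isomorphism \eqref{eq:coend-vs-Fl}, and dually \eqref{eq:end-vs-Gr}, is compatible with composition of functors): the module associators identify $H_{a\otimes a',\,b}$ with $F_a\cir H_{a',b}$ and $H_{a,\,b\otimes b'}$ with $H_{a,b'}\cir G_b$, and under these identifications the canonical isomorphisms $(F_1\cir F_2)^\la\cong F_2^\la\cir F_1^\la$ (and the analogue for $\ra$) are precisely the data the coherence clause guarantees to be respected, once one also uses $(a\otimes a')^\vee\cong a'^\vee\otimes a^\vee$ and its left-dual counterpart. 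Checking that the duality and associativity constraints of $\cala$ and $\calb$ assemble correctly through this decomposition is the only genuinely delicate bookkeeping.
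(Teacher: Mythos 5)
Your proposal is correct and follows exactly the route the paper takes: the corollary is stated there as an immediate combination of Remark \ref{Remark:adjoint-action} (the adjoints $F_a^\ra\eq F_{\Vee a}$, $F_a^\la\eq F_{a^\vee}$, $G_b^\ra\eq G_{b^\vee}$, $G_b^\la\eq G_{\Vee b}$) with the (co)end--adjunction formulas \eqref{eq:coend-vs-Fl} and \eqref{eq:end-vs-Gr} of Lemma \ref{lem:adj4(co)ends}, with coherence supplied by the composition-coherence clause of that lemma. Your computation of $H_{a,b}^\ra$ and $H_{a,b}^\la$ and the resulting substitutions match the paper's intended argument.
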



\subsection{Radford theorems for bimodule categories}

Just like for any finite linear category, the Nakayama functor can be defined for a
finite module or bimodule category $\calm$ over a finite tensor category. It is then natural
to study how the functors $\pifu_\calm$ and $\pift_\calm$ relate to
the module category structure.

For $\calm$ an $\cala$-$\calb$-bimodule category over finite tensor categories
$\cala$ and $\calb$, denote by $\rrMll$ the $\cala$-$\calb$-bimodule that coincides with
$\calm$ as a finite linear category but for which the left $\cala$-action is twisted by the 
double right dual functor and the right $\calb$-action is twisted by the double left dual 
functor, and by $\llMrr$ the one for which the left $\cala$-action is twisted by the double 
left dual functor and the right $\calb$-action by the double right dual functor.
We have

\begin{thm}\label{thm:pifuMbimodfunc}[Radford theorem for finite bimodule categories]
\\[2pt]
Let $\cala$ and $\calb$ be finite tensor categories and $\calm$ an $\cala$-$\calb$-bimodule
category. Then the endofunctor $\pifu_\calm$ of the category $\calm$ has a natural
structure of a bimodule functor
  \be
  \pifu_\calm\colon~ \calm \to \rrMll ,
  \ee
i.e.\ there are coherent isomorphisms
  \be
  \pifu_\calm(a.m.b) \cong a^{\vee\vee\!}.\,\pifu_\calm(m)\,.{}^{\!\vee\vee\!}b
  \label{eq:NlM-bimodfunctor}
  \ee
for all $m\iN\calm$, $a\iN\cala$ and $b\iN\calb$.
\end{thm}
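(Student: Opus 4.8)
The plan is to deduce the bimodule constraint directly from the composition property of the left exact Nakayama functor in Theorem \ref{thm:picu.F=Fll.pifu}, applied to the action endofunctors of $\calm$. Fix objects $a\iN\cala$ and $b\iN\calb$ and consider the endofunctor $F_{a,b} := F_a \circ G_b$ of $\calm$ given by $m \mapsto a.m.b$; since the left and right actions commute up to the bimodule constraint, the order of composition is immaterial. By Remark \ref{Remark:adjoint-action} the functors $F_a$ and $G_b$ are exact, so $F_{a,b}$ is exact and in particular left exact. Using the explicit adjoints recorded there, namely that $F_a$ has left adjoint $F_{a^\vee}$ and $G_b$ has left adjoint $G_{\Vee b}$, I would compute its iterated left adjoints as
\[
  F_{a,b}^\la \,\cong\, a^\vee.(-).\Vee b \qquad\text{and}\qquad
  F_{a,b}^\lla \,\cong\, a^{\vee\vee\!}.(-).{}^{\!\vee\vee\!}b ,
\]
both of which are again exact, hence left exact. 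Thus $F_{a,b}$ meets the hypotheses of Theorem \ref{thm:picu.F=Fll.pifu}(i), and applying that theorem (with both source and target equal to $\calm$) gives a natural (in $m$) isomorphism
\[
  \pifu_\calm(a.m.b) = \pifu_\calm \circ F_{a,b}(m)
  \,\cong\, F_{a,b}^\lla \circ \pifu_\calm(m)
  \,\cong\, a^{\vee\vee\!}.\,\pifu_\calm(m)\,.{}^{\!\vee\vee\!}b ,
\]
which is exactly the claimed isomorphism \eqref{eq:NlM-bimodfunctor}.

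To upgrade this pointwise family to an actual bimodule functor structure, I would first check naturality in $a$ and $b$. The isomorphism $\varphi^{\rm l}_F$ of Theorem \ref{thm:picu.F=Fll.pifu} is built canonically out of $F$ (via adjunction and the coherent isomorphism \eqref{eq:end-vs-Gr}), hence is functorial in $F$. A morphism $a\To a'$ induces a natural transformation $F_a \Rightarrow F_{a'}$, and the double right dual $(-)^{\vee\vee}$ is a covariant functor; combining these with the functoriality of $\varphi^{\rm l}$ shows that \eqref{eq:NlM-bimodfunctor} is natural in $a$, and symmetrically in $b$.

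The main work lies in the coherence, that is, in verifying that the isomorphisms \eqref{eq:NlM-bimodfunctor} satisfy the associativity axioms of a bimodule functor. Here I would invoke Theorem \ref{thm:picu.F=Fll.pifu}(ii), which asserts that the isomorphisms $\varphi^{\rm l}$ are compatible with composition of functors via the commuting square \eqref{eq:coherence-comp}. Specializing \eqref{eq:coherence-comp} to the canonical identifications $F_{a\oti a'} \cong F_a \circ F_{a'}$ of left action functors (and the analogous ones for the right action $G$), and combining this with the monoidal structure $(a\oti a')^{\vee\vee} \cong a^{\vee\vee} \oti a'^{\vee\vee}$ of the double dual on $\cala$ (and likewise on $\calb$), converts \eqref{eq:coherence-comp} into precisely the pentagon coherence for the module functor structure; the compatibility of the left and right module structures then follows because $F_a$ and $G_b$ commute as endofunctors of $\calm$, so that the instances of \eqref{eq:coherence-comp} for $\cala$ and for $\calb$ interlock consistently. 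I expect this last bookkeeping to be the principal obstacle: one must track carefully the comparison isomorphisms $F_{a\oti a'} \cong F_a \circ F_{a'}$, the unit constraints, and the monoidal structure of $(-)^{\vee\vee}$, and confirm that the left- and right-action coherences are mutually compatible. By contrast, the existence of the isomorphism itself is an immediate application of Theorem \ref{thm:picu.F=Fll.pifu} once the adjoints of $F_{a,b}$ have been identified.
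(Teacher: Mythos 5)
Your proposal is correct and takes essentially the same route as the paper, whose proof consists precisely in applying Theorem~\ref{thm:picu.F=Fll.pifu} to the action endofunctors, whose exactness and iterated left adjoints ($F_a^\lla \,{\cong}\, F_{a^{\vee\vee}}$, $G_b^\lla \,{\cong}\, G_{{}^{\vee\vee}b}$) are exactly those recorded in Remark~\ref{Remark:adjoint-action}. Your explicit treatment of naturality in $a,b$ and of coherence via the diagram \eqref{eq:coherence-comp} spells out bookkeeping that the paper leaves implicit, but it is the same argument.
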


The terminology \emph{Radford theorem} is appropriate because, as we will learn in the next
subsection (see Remark \ref{rem:Radford}), for the case of a finite tensor category seen 
as bimodule category over itself, the statement reduces to a variant of the Radford theorem 
for finite tensor category as obtained in \cite{etno2}.

\begin{proof}
The isomorphisms \eqref{eq:NlM-bimodfunctor} are obtained by applying Theorem 
\ref{thm:picu.F=Fll.pifu} to the functor \eqref{eq:left-action-a} given by left action.
\end{proof}

Analogously, considering the functor $\pift_\calm$ 
leads to the following alternative version of the Radford theorem:

\begin{thm} \label{thm:piftMbimodfunc}
For $\cala$ and $\calb$ finite tensor categories,
the Nakayama functor $\pift_\calm$ of an $\cala$-$\calb$-bi\-mo\-dule
category $\calm$ extends to a bimodule functor $\pift_\calm\colon \calm\To\llMrr$: we have
  \be
  \pift_\calm(a.m.b) \cong {}^{\vee\vee\!}a.\,\pift_\calm(m)\,.b^{\vee\vee}
  \ee
coherently for all $m\iN\calm$, $a\iN\cala$ and $b\iN\calb$.
\end{thm}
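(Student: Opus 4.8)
The plan is to mirror exactly the strategy used in the proof of Theorem \ref{thm:pifuMbimodfunc}, but now working with the right exact Nakayama functor $\pift_\calm$ and the endofunctor $F_a$ of left action with $a \iN \cala$ (and, symmetrically, the right action by $b \iN \calb$). The key observation is that Theorem \ref{thm:picu.F=Fll.pifu}(i) has a right exact counterpart: for a right exact functor $G$ having a right exact right adjoint $G^\ra$ whose own right adjoint $G^\rra$ is again right exact, there is a natural isomorphism $\varphi^{\rm r}_G\colon \pift_\calb \cir G \,{\cong}\, G^\rra \cir \pift_\Cala$. I would apply this with $G \eq F_a$.

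First I would check that $F_a$ satisfies the hypotheses of the right exact version of Theorem \ref{thm:picu.F=Fll.pifu}. By Remark \ref{Remark:adjoint-action}, the functor $F_a$ is exact, its right adjoint is $F_{\Vee a}$, and its left adjoint is $F_{a^\vee}$. Iterating, the right adjoint of the right adjoint is $(F_{\Vee a})^\ra \eq F_{\Vee(\Vee a)} \eq F_{\Vee\Vee a}$, and this is again exact (hence in particular right exact), so the hypothesis is met. Applying $\varphi^{\rm r}_{F_a}$ then gives a natural isomorphism $\pift_\calm \cir F_a \,{\cong}\, (F_a)^\rra \cir \pift_\calm$, i.e.
\be
\pift_\calm(a.m) \,\cong\, {}^{\vee\vee\!}a.\,\pift_\calm(m)
\ee
naturally in $m$. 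The analogous argument for the right $\calb$-action, using the right adjoint chain $G_a \dashv G_{a^\vee} \dashv \dots$ from Remark \ref{Remark:adjoint-action} applied to the right module structure, produces the twist $b^{\vee\vee}$ on the right tensorand, giving $\pift_\calm(m.b) \,{\cong}\, \pift_\calm(m).b^{\vee\vee}$.

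What remains is to combine the left and right isomorphisms into a single bimodule functor isomorphism $\pift_\calm(a.m.b) \,{\cong}\, {}^{\vee\vee\!}a.\,\pift_\calm(m).b^{\vee\vee}$, and to verify coherence — that these isomorphisms are compatible with the associators $\mu$ and the monoidal structures of $\cala$ and $\calb$, and that the left and right twists commute with each other (a commuting-square condition reflecting the bimodule axiom). I expect this coherence bookkeeping to be the main obstacle; the existence of the component isomorphisms is essentially immediate from the machinery. The coherence follows from part (ii) of Theorem \ref{thm:picu.F=Fll.pifu}: the naturality diagram \eqref{eq:coherence-comp} for $\varphi^{\rm r}$ applied to the composites $F_a \cir F_{a'} \,{\cong}\, F_{a\oti a'}$ enforces compatibility with $\mu$, and the canonical isomorphism $({}^{\vee\vee})$ respecting tensor products transports this to the required monoidal coherence. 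Since $\varphi^{\rm r}$ is the right exact analogue (obtained by taking opposites, as in the proofs of Proposition \ref{proposition:Nak-boxtimes-opp} and Lemma \ref{Lemma:Nak-endo-fun}) of the statement already proved for $\pifu_\calm$ in Theorem \ref{thm:pifuMbimodfunc}, the whole argument can alternatively be deduced from that theorem by passing to the opposite category via \eqref{eq:8}, which may streamline the coherence verification.
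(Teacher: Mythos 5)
Your proposal is correct and follows essentially the same route as the paper: the paper proves this theorem ``analogously'' to Theorem \ref{thm:pifuMbimodfunc}, i.e.\ by applying the right exact isomorphism $\varphi^{\rm r}_G\colon \pift_\calb \cir G \,{\cong}\, G^\rra \cir \pift_\Cala$ from Theorem \ref{thm:picu.F=Fll.pifu} to the exact action functors $F_a$ and $G_b$, whose double right adjoints are $F_{{}^{\vee\vee\!}a}$ and $G_{b^{\vee\vee}}$ by Remark \ref{Remark:adjoint-action}, with coherence supplied by part (ii) of that theorem. Your computation of the adjoint chains and of the resulting twists is exactly what the paper's ``analogously'' elides.
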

      
Next we recall \cite{etos}

\begin{Definition}
An $\cala$-module $\calm$ is called an \emph{exact} module category iff for every projective 
$p \iN \cala$ and every $m \iN \calm$ the object $p.m \iN \calm$ is projective.
\end{Definition}

Any additive module functor $\calm \To \caln$ from an exact module category $\calm$ is exact 
\cite[Prop.\,7.6.9]{EGno}; as a consequence the two previous results imply immediately
that for $\calm$ an exact module category the Nakayama 
functor $\pifu_\calm$ as well as $\pift_\calm$ are exact. Moreover, an exact module category 
satisfies the criterion (i) of Lemma \ref{lem:self-injective}
\cite[Cor.\,7.6.4]{EGno}. The lemma thus gives

\begin{cor}\label{cor:Mexact-Naka-exact}
For $\calm$ an exact module category over a finite tensor category the Nakayama functor
$\pift_\calm$ and the functor $\pifu_\calm$ are quasi-inverse equivalences 
and are thus in particular exact.
\end{cor}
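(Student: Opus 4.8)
The plan is to read this corollary as a synthesis of the two Radford-type theorems \ref{thm:pifuMbimodfunc} and \ref{thm:piftMbimodfunc} with the structure theory of exact module categories; essentially no new computation is required. The assertion has two halves — that $\pifu_\calm$ and $\pift_\calm$ are exact, and that they are mutually quasi-inverse equivalences — and I would dispatch them separately, since each rests on a different structural input about exact module categories.

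For exactness I would argue purely structurally. By Theorems \ref{thm:pifuMbimodfunc} and \ref{thm:piftMbimodfunc} each of $\pifu_\calm$ and $\pift_\calm$ carries the canonical structure of a bimodule, and in particular of an additive module, functor on $\calm$. Since $\calm$ is an exact module category, every additive module functor out of it is exact by \cite[Prop.\,7.6.9]{EGno}; applying this to $\pifu_\calm$ and $\pift_\calm$ yields their exactness without any reference to the equivalence claim.

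For the equivalence statement I would invoke the self-injectivity criterion directly. Choose a finite-dimensional algebra $A$ together with a linear equivalence $\calm \simeq A\Mod$. By \cite[Cor.\,7.6.4]{EGno} this $A$ satisfies criterion (i) of Lemma \ref{lem:self-injective}, i.e.\ $A$ is self-injective, so by Definition \ref{Def:symFrobcat}(i) the category $\calm$ is self-injective as a finite linear category. Now Lemma \ref{lem:Pi=A*} identifies $\pift_\calm$ with the algebraic Nakayama functor $\pift_A \cong ({}_AA_A)^{*}\otA{-}$ under this equivalence, and the left-exact counterpart $\pifu_\calm \cong \pifu_A$ follows by uniqueness of adjoints, since $\pift_\calm$ is left adjoint to $\pifu_\calm$ (Lemma \ref{lem:pifu-adjunction}) while $\pift_A$ is left adjoint to $\pifu_A$ by the tensor–hom adjunction. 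As criterion (i) of Lemma \ref{lem:self-injective} asserts precisely that $\pift_A$ and $\pifu_A$ are quasi-inverse, transporting back along $\calm \simeq A\Mod$ shows that $\pift_\calm$ and $\pifu_\calm$ are quasi-inverse equivalences; this is equivalently Proposition \ref{prop:Nakayama-equiv}(i) applied to the self-injective category $\calm$. (Because every equivalence of finite linear categories is automatically exact, this re-derives exactness too; the virtue of the separate module-functor argument above is that it delivers exactness without first having to pass through self-injectivity.)

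The main obstacle is conceptual rather than technical: the entire weight of the proof rests on correctly importing the two external facts about exact module categories from \cite{EGno} and on recognizing that the abstract Nakayama functors introduced in Section \ref{ssec:Nakayama} are exactly the functors governed by Lemma \ref{lem:self-injective} and Proposition \ref{prop:Nakayama-equiv}. That matching of the coend/end descriptions with the bimodule descriptions $({}_AA_A)^{*}\otA{-}$ and $\Ho{\!A}{\Awee}{-}$ was already secured in Lemma \ref{lem:Pi=A*}, so with those results in hand the corollary follows immediately.
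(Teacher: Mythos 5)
Your proposal is correct and follows essentially the same route as the paper: exactness via the bimodule-functor structure from Theorems \ref{thm:pifuMbimodfunc} and \ref{thm:piftMbimodfunc} together with \cite[Prop.\,7.6.9]{EGno}, and the quasi-inverse statement via \cite[Cor.\,7.6.4]{EGno} combined with Lemma \ref{lem:self-injective} (equivalently Proposition \ref{prop:Nakayama-equiv}(i)). Your extra step identifying $\pifu_\calm$ with $\pifu_A$ by uniqueness of adjoints is a harmless elaboration of what the paper leaves implicit.
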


\begin{Remark}
In view of Proposition \ref{prop:Nakayama-equiv} we can conclude that, for $\calm$ an exact
module category over any finite tensor category, any algebra $B \iN \vect$ such that $ B\Mod$ 
is equivalent to $\calm$ as a \ko-linear category is self-in\-ject\-ive as a \ko-algebra.
This is equivalent to the fact \cite[Cor.\,7.6.4]{EGno} that every projective object of an
exact module category over a finite (multi-)tensor category is injective and vice versa.
\end{Remark}

By considering the twisted bimodule structure from Theorem \ref{thm:pifuMbimodfunc} 
for the special case of an exact module category one obtains

\begin{cor}
Let $\calm$ and $\caln$ be module categories over a finite tensor category,
and let the $\cala$-mo\-du\-le $\calm$ be exact. Then for any module functor $F\colon \calm \To \caln$
the natural isomorphism \eqref{eq:pifu.F=F.pifu} is a module natural transformation
between module functors from $\calm$ to $\rrNll$.
\end{cor}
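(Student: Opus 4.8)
The plan is to deduce module-naturality from two properties of the Nakayama transformation $\varphi^{\rm l}$ that are already in hand: its behaviour under composition, recorded in the coherence diagram~\eqref{eq:coherence-comp} of Theorem~\ref{thm:picu.F=Fll.pifu}(ii), and its naturality in the functor argument. First I would note that, $\calm$ being exact, the module functor $F$ is exact by \cite[Prop.\,7.6.9]{EGno} and the hypotheses of Theorem~\ref{thm:picu.F=Fll.pifu} are met, so that $\varphi^{\rm l}_F\colon \pifu_\caln\cir F\cong F^\lla\cir\pifu_\calm$ is defined and $F^\lla\colon\calm\To\caln$ is again a module functor. Since $\pifu_\calm$ and $\pifu_\caln$ carry the twisted bimodule structure of Theorem~\ref{thm:pifuMbimodfunc}, both $\pifu_\caln\cir F$ and $F^\lla\cir\pifu_\calm$ are module functors $\calm\To\rrNll$, the double-dual twist being supplied by the Nakayama factors. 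Establishing that $\varphi^{\rm l}_F$ is a module natural transformation then reduces to the commutativity, for each $a\iN\cala$, of the single square comparing $\varphi^{\rm l}_F$ with these two module constraints.

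The key point is that every structure morphism entering that square is itself a value of $\varphi^{\rm l}$. Writing $F_a^\calm\colon m\mapsto a.m$ and $F_a^\caln\colon n\mapsto a.n$ for the action endofunctors of~\eqref{eq:left-action-a}, Remark~\ref{Remark:adjoint-action} gives $(F_a^\calm)^\lla\cong F_{a^{\vee\vee}}^\calm$, and, exactly as in the proof of Theorem~\ref{thm:pifuMbimodfunc}, the module constraint of $\pifu_\calm$ is $\varphi^{\rm l}_{F_a^\calm}\colon\pifu_\calm(a.m)\cong a^{\vee\vee}.\pifu_\calm(m)$, and likewise $\varphi^{\rm l}_{F_a^\caln}$ for $\pifu_\caln$. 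The module constraint of $F^\lla$ is the double left adjoint $s^\lla$ of the module constraint $s$ of $F$, which under the identifications $(F\cir F_a^\calm)^\lla\cong F^\lla\cir F_{a^{\vee\vee}}^\calm$ and $(F_a^\caln\cir F)^\lla\cong F_{a^{\vee\vee}}^\caln\cir F^\lla$ is precisely the datum $F^\lla(a^{\vee\vee}.m)\cong a^{\vee\vee}.F^\lla(m)$ turning $F^\lla$ into a functor $\rrMll\To\rrNll$.

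The plan is then to play the two composites $F\cir F_a^\calm$ and $F_a^\caln\cir F$ off against one another, linked by the module constraint of $F$, a natural isomorphism $s_a\colon F\cir F_a^\calm\Rightarrow F_a^\caln\cir F$. I would apply~\eqref{eq:coherence-comp} twice: once to factorize $\varphi^{\rm l}_{F\cir F_a^\calm}$ through $\varphi^{\rm l}_F$ and $\varphi^{\rm l}_{F_a^\calm}$, and once to factorize $\varphi^{\rm l}_{F_a^\caln\cir F}$ through $\varphi^{\rm l}_{F_a^\caln}$ and $\varphi^{\rm l}_F$. Feeding these two factorizations into the naturality square of $\varphi^{\rm l}$ along $s_a$ — valid because each ingredient of the construction of $\varphi^{\rm l}$ in Theorem~\ref{thm:picu.F=Fll.pifu} (the adjunction isomorphism and the coend identity~\eqref{eq:end-vs-Gr}) is natural in the functor — the two factorizations collapse onto the desired module-compatibility square, the canonical isomorphisms $(F_2\cir F_1)^\lla\cong F_2^\lla\cir F_1^\lla$ cancelling in pairs. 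Naturality in $a$ then follows from Corollary~\ref{cor:Action-boxtimes} together with the coherence already built into~\eqref{eq:coherence-comp}.

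I expect the only real obstacle to be organizational: matching the double left adjoint $s^\lla$ of the constraint of $F$ with the constraint of $F^\lla$, and threading the canonical composition isomorphisms through the two invocations of~\eqref{eq:coherence-comp} so that they indeed cancel. This is mate-calculus bookkeeping rather than a substantive difficulty, since the requisite naturality of $\varphi^{\rm l}$ in the functor argument is immediate from its explicit construction.
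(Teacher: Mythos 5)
Your argument is correct, but it is organized quite differently from the paper's. The paper disposes of this corollary by a direct calculation: one writes $\pifu_\caln(F(a.m))$ as the end $\int_{n\in\caln}\Ho\caln n{F(a.m)}\oti n$, uses that the action of $a\iN\cala$ is exact and hence commutes with the end, and inserts the module constraint of $F$ at the appropriate point of the chain of isomorphisms defining $\varphi^{\rm l}_F$. You instead argue entirely at the level of the transformation $\varphi^{\rm l}$ itself, exploiting that the module constraints of $\pifu_\calm$ and $\pifu_\caln$ are by construction the components $\varphi^{\rm l}_{F_a^\calm}$ and $\varphi^{\rm l}_{F_a^\caln}$ (this is exactly how Theorem \ref{thm:pifuMbimodfunc} is proved), and then combining the composition coherence \eqref{eq:coherence-comp} for $F\cir F_a^\calm$ and for $F_a^\caln\cir F$ with the naturality of $\varphi^{\rm l}$ along the module constraint $s_a\colon F\cir F_a^\calm\Rightarrow F_a^\caln\cir F$. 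This is a clean and genuinely more conceptual route, and it makes transparent why the statement has to hold: the module datum on every functor in sight is an instance of $\varphi^{\rm l}$ or a mate thereof. What it costs you is one extra input that the paper never records explicitly, namely that $\varphi^{\rm l}$ is natural in the functor argument, i.e.\ that for a natural isomorphism $\alpha\colon F\Rightarrow G$ one has $(\alpha^\lla\pifu_\calm)\cir\varphi^{\rm l}_F=\varphi^{\rm l}_G\cir(\pifu_\caln\alpha)$. This does follow from the construction in Theorem \ref{thm:picu.F=Fll.pifu} (each step --- the adjunction isomorphism and the identification \eqref{eq:end-vs-Gr} --- is compatible with mates), but it is a lemma you should state and verify rather than wave at, since it carries the same weight in your argument that the commutation of the exact $\cala$-action with the end carries in the paper's. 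The remaining bookkeeping you flag --- identifying the module constraint of $F^\lla$ with the double mate $s^\lla$ of the constraint of $F$ modulo the canonical isomorphisms $(F_2\cir F_1)^\lla\cong F_2^\lla\cir F_1^\lla$ --- is indeed just the definition of the module structure on adjoints of module functors, so no difficulty hides there.
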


\begin{proof}
That we deal with a module natural isomorphism is verified by direct calculation, using that 
the action of $\cala$ is exact and hence commutes with the end and that $F$ is a module functor.
\end{proof}


\subsection{Distinguished objects}

The previous considerations remain valid if we work with (bi)module categories over 
finite \emph{multi}\,tensor categories, i.e.\ \cite[Def.\,4.1.1]{EGno} we do not need to
assume that the monoidal unit $\one$ of the tensor category $\cala$ is absolutely simple,
i.e.\ satisfies $\Ho\cala \one\one \cong \ko$. We now examine the case that the bimodule 
category under consideration is a finite multitensor category $\calm \eq \cala$ regarded as
a bimodule category over itself, which is automatically an exact bimodule category
\cite[Ex.\,7.5.5]{EGno}. We can then make use of trivial identities like
$\pifu_\cala(a) \,{\cong}\, \pifu_\cala(\one\oti a) \eq \pifu_\cala(\one . a)$.
By setting $m \eq \one$ and either $a \eq \one$ or $b\eq \one$ in
Theorems \ref{thm:pifuMbimodfunc} and \ref{thm:piftMbimodfunc} we learn that
  \be
  \bearl
  \pifu_\cala(a) \,\cong\, \pifu_\cala(\one) \oti {}^{\vee\vee\!}a
  \,\cong\, a^{\vee\vee} \oti \pifu_\cala(\one)
  \qquad{\rm and}
  \Nxl3
  \pift_\cala(a) \,\cong\, \pift_\cala(\one) \oti a^{\vee\vee} 
  \,\cong\, {}^{\vee\vee\!}a \oti \pift_\cala(\one) \,.
  \eear
  \ee
Thus we obtain

\begin{Lemma}\label{lem:pi=D...}
Let $\cala$ be a finite multitensor category. Then there are isomorphisms
  \be
  \pifu_\cala \,\cong\, D_\Cala \oti {}^{\vee\vee\!}{-} \,\cong\, {-}^{\vee\vee} \oti D_\Cala 
  \label{eq:pifu-D}
  \ee
and
  \be
  \pift_\cala \,\cong\, \widetilde D_\Cala \oti {-}^{\vee\vee}
  \,\cong\, {}^{\vee\vee\!}{-} \oti \widetilde D_\Cala
  \label{eq:pift-Dt}
  \ee
of functors, with
  \be
  D_\Cala := \pifu_\cala(\one) = \int_{a\in\cala} \Ho\cala a\one \oti a 
  \qquad{\rm and}\qquad
  \widetilde D_{\Cala} := \pift_\cala(\one) = \int^{a\in\cala}\! \Hox\cala \one a \oti a \,.
  \label{D=end..}
  \ee
\end{Lemma}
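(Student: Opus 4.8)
The plan is to treat the finite multitensor category $\cala$ as the regular $\cala$-$\cala$-bimodule category, in which both actions are given by the tensor product, $a.m.b \eq a\oti m\oti b$, and then to read off all four isomorphisms by specializing the two bimodule Radford theorems, Theorem \ref{thm:pifuMbimodfunc} and Theorem \ref{thm:piftMbimodfunc}, that have already been established. Since those theorems are phrased for finite tensor categories, I would first invoke the extension to the multitensor setting recorded at the beginning of this subsection, which merely drops the assumption that $\one$ be absolutely simple; nothing is lost, as the left- and right-action functors are exact with exact iterated adjoints by Remark \ref{Remark:adjoint-action}, and the crucial fact $\one^{\vee\vee}\cong\one\cong{}^{\vee\vee\!}\one$ (the unit being its own dual) holds in any rigid category irrespective of simplicity of $\one$.

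First I would apply Theorem \ref{thm:pifuMbimodfunc} with $\calm \eq \cala$, which supplies the natural bimodule-functor isomorphism $\pifu_\cala(a\oti m\oti b) \cong a^{\vee\vee}\oti \pifu_\cala(m)\oti {}^{\vee\vee\!}b$ of \eqref{eq:NlM-bimodfunctor}. Setting $m\eq\one$ and $a\eq\one$ and using the unit constraints together with $\one^{\vee\vee}\cong\one$, the left tensorand collapses and I obtain $\pifu_\cala(b)\cong D_\Cala\oti{}^{\vee\vee\!}b$ naturally in $b$, which is the first isomorphism of \eqref{eq:pifu-D}; symmetrically, setting $m\eq\one$ and $b\eq\one$ and using ${}^{\vee\vee\!}\one\cong\one$ yields $\pifu_\cala(a)\cong a^{\vee\vee}\oti D_\Cala$, the second isomorphism of \eqref{eq:pifu-D}. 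The two isomorphisms in \eqref{eq:pift-Dt} would then follow in exactly the same way from Theorem \ref{thm:piftMbimodfunc}, the only change being that there the double left dual sits on the left tensorand and the double right dual on the right, so that specializing the two actions produces $\pift_\cala(b)\cong\widetilde D_\Cala\oti b^{\vee\vee}$ and $\pift_\cala(a)\cong{}^{\vee\vee\!}a\oti\widetilde D_\Cala$.

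To finish, I would establish the explicit formulas \eqref{D=end..} simply by evaluating the defining (co)end of each Nakayama functor at the unit object: from the definition \eqref{eq:pifu} one reads off $D_\Cala \eq \pifu_\cala(\one)\eq\int_{a\in\cala}\Ho\cala a\one\oti a$, and from the analogous definition of $\pift_\cala$ one reads off $\widetilde D_\Cala\eq\pift_\cala(\one)\eq\int^{a\in\cala}\Hox\cala\one a\oti a$. I do not anticipate a genuine obstacle here: the argument is essentially bookkeeping, and the only points requiring care are keeping straight which iterated dual (left versus right) is produced by which theorem and on which side, and confirming that the two displayed descriptions of each Nakayama functor are compatible. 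The latter is automatic, since both compute the same endofunctor, so that composing them furnishes the natural isomorphism $D_\Cala\oti{}^{\vee\vee\!}a\cong a^{\vee\vee}\oti D_\Cala$ together with its tilde analogue.
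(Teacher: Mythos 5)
Your proposal is correct and follows essentially the same route as the paper: regard $\cala$ as the regular (automatically exact) bimodule category over itself, specialize Theorems \ref{thm:pifuMbimodfunc} and \ref{thm:piftMbimodfunc} at $m\eq\one$ with $a\eq\one$ or $b\eq\one$, and read off \eqref{D=end..} from the defining (co)ends of the Nakayama functors. The placement of the iterated duals in your four specializations matches the two theorems, so no correction is needed.
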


Note that $\pifu_\cala$ is by construction left exact; from the explicit expressions given in
\eqref{eq:pifu-D} and \eqref{eq:pift-Dt} we see directly that both $\pift_\cala$ and
$\pifu_\cala$ are in fact exact, in agreement with Corollary \ref{cor:Mexact-Naka-exact}
as applied to the exact bimodule category $\acalaa$.
If the category $\cala$ is clear from the context, we write just $D$ in place of $D_{\Cala}$. 

We can combine Lemmas \ref{lem:pifu-adjunction} and \ref{lem:pi=D...} and the fact that the
double dual is an equivalence to see that $ \widetilde D \,{\cong}\, {}^{\vee\vee\vee\!}D $.
Also, for the composition of the two functors $\pifu_\cala$ and $\pift_\cala$ we get
  \be
  \pifu_\cala \cir \pift_\cala
  \,\cong\, D \oti {}^{\vee\vee\!}\widetilde D \oti {-}
  \,\cong\, \widetilde D^{\vee\vee} \oti {-}^{\vee \vee \vee \vee} \oti D
  \,\cong\, \Vee D \oti {-}^{\vee \vee \vee \vee} \oti D \,,
  \label{eq:pifu.pift}
  \ee
and similar formulas for $\pift_\cala \cir \pifu_\cala$. As a consequence we obtain

\begin{Lemma}\label{lem:D-inv}
Let $\cala$ be a finite multitensor 
category. Then the object $D \iN \cala$ is invertible, with inverse $\widetilde D$.
\end{Lemma}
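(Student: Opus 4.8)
The plan is to deduce the invertibility of $D$ from the fact that tensoring with $D$ is an equivalence, and then to pin down the inverse as $\widetilde D$ by tracking iterated duals. First I would observe that, by \cite[Ex.\,7.5.5]{EGno}, the finite multitensor category $\cala$ regarded as a bimodule category over itself is exact, so Corollary \ref{cor:Mexact-Naka-exact} applies and tells us that $\pifu_\cala$ is an equivalence. By the explicit description \eqref{eq:pifu-D} we have $\pifu_\cala \cong D \oti {}^{\vee\vee\!}{-}$. Since in a rigid category the double left dual functor ${}^{\vee\vee\!}{-}$ is an equivalence (a quasi-inverse being the double right dual), it follows that the endofunctor $L_D := D \oti {-}$ of $\cala$, isomorphic to $\pifu_\cala$ precomposed with a quasi-inverse of ${}^{\vee\vee\!}{-}$, is itself an equivalence.

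Next I would show that an object $D$ for which $L_D$ is an equivalence is necessarily invertible. By Lemma \ref{lem:duals-Homcalm} (applied with $\calm \eq \cala$) and its backwards reading recorded in Remark \ref{Remark:adjoint-action}, the functor $L_D$ has right adjoint $L_{\Vee D}$ and left adjoint $L_{D^\vee}$. When $L_D$ is an equivalence, both adjoints are quasi-inverse to it; from $L_{\Vee D} \cir L_D \cong \id_\cala$ and $L_D \cir L_{\Vee D} \cong \id_\cala$, evaluated at $\one$, we read off $\Vee D \oti D \cong \one \cong D \oti \Vee D$. Hence $D$ is invertible, with inverse $\Vee D$.

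Finally I would identify this inverse with $\widetilde D$. We already know $\widetilde D \cong {}^{\vee\vee\vee\!}D$ (established just above the statement). For the now-invertible object $D$ the duality functors act by inversion: $\Vee D \cong D^{-1}$, whence ${}^{\vee\vee\!}D \cong (D^{-1})^{-1} \cong D$ and therefore ${}^{\vee\vee\vee\!}D \cong \Vee D \cong D^{-1}$. Thus $\widetilde D \cong D^{-1}$, which is exactly the claim. As a consistency check one may instead use the composite \eqref{eq:pifu.pift}: since $\pifu_\cala$ and $\pift_\cala$ are quasi-inverse by Corollary \ref{cor:Mexact-Naka-exact}, evaluating $\pifu_\cala \cir \pift_\cala \cong \id_\cala$ at $\one$ yields $D \oti {}^{\vee\vee\!}\widetilde D \cong \one$, consistent with $\widetilde D \cong D^{-1}$.

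The structural input of the argument (exactness $\Rightarrow$ equivalence, and equivalence $\Rightarrow$ invertibility) is immediate from the results already in place; the only genuine care is needed in the last step, where one must keep track of on which side each dual sits and use that for an invertible object every odd iterated dual returns the inverse and every even one returns the object. That duality bookkeeping is the sole point where a slip could occur.
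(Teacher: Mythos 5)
Your proof is correct, but it reaches the conclusion by a somewhat different route than the paper. The paper's own argument is a one-liner: since $\pifu_\cala$ and $\pift_\cala$ are quasi-inverse by Corollary \ref{cor:Mexact-Naka-exact}, the composite formula \eqref{eq:pifu.pift} gives $D \oti {}^{\vee\vee\!}\widetilde D \oti {-} \,{\cong}\, \id_\Cala$ and hence $D \oti {}^{\vee\vee\!}\widetilde D \,{\cong}\, \one$ in a single stroke; the identification of the inverse as $\widetilde D$ itself, rather than ${}^{\vee\vee\!}\widetilde D$, is left implicit there. You instead use only that $\pifu_\cala$ is an equivalence, factor out the double dual from \eqref{eq:pifu-D} to conclude that $L_D \eq D \oti {-}$ is an equivalence, extract invertibility of $D$ from the adjunctions $L_{D^\vee} \dashv L_D \dashv L_{{}^{\vee\!}D}$ recorded in Remark \ref{Remark:adjoint-action}, and only then bring in the previously established relation $\widetilde D \,{\cong}\, {}^{\vee\vee\vee\!}D$ to pin down the inverse. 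This costs a little more bookkeeping with iterated duals, but it has the virtue of making explicit exactly the step the paper glosses over: once $D$ (equivalently $\widetilde D$) is known to be invertible, its even iterated duals are isomorphic to itself and its odd ones to its inverse, so that ${}^{\vee\vee\!}\widetilde D \,{\cong}\, \widetilde D \,{\cong}\, D^{-1}$. Your final consistency check via \eqref{eq:pifu.pift} is precisely the paper's proof. Both arguments ultimately rest on the same two inputs, namely exactness of $\cala$ as a bimodule category over itself and the explicit Nakayama formulas of Lemma \ref{lem:pi=D...}, so the difference is one of packaging rather than substance.
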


\begin{proof}
Since $\cala$ is exact as a bimodule category over itself, by Corollary 
\ref{cor:Mexact-Naka-exact} $\pifu_\Cala$ is an equivalence with quasi-inverse $\pift_\Cala$.
Formula \eqref{eq:pifu.pift} then implies that the endofunctor
$D \oti {}^{\vee\vee\!}\widetilde D \oti -$ of $\cala$ is isomorphic to $\id_\Cala$, hence 
the object $D \oti {}^{\vee\vee\!}\widetilde D$ is isomorphic to $\one$. Similarly,
from the natural isomorphism $\pift_\cala \cir \pifu_\cala \,{\cong}\, \id_{\Cala}$ we see
that $\widetilde D \oti D^{\vee \vee}$ is isomorphic to $\one$, and hence also 
${}^{\vee\vee\!}\widetilde D \oti D \,{\cong}\, \one$. Thus $D$ and $ {}^{\vee\vee\!}\widetilde D$
are mutual inverses. But then also $\widetilde D$ is invertible and thus satisfies
$\widetilde D^{-1} \,{\cong}\, {}^{\vee\!} \widetilde D$. It follows that
${}^{\vee\vee\!}\widetilde D \,{\cong}\, \widetilde D$, thus establishing the claim.
\end{proof}

Next note that Lemma \ref{lem:pi=D...} implies in particular that the quadruple dual 
satisfies ${-}^{\vee\vee\vee\vee} \oti D \,{\cong}\, D \oti {-}$. Thus we have

\begin{cor}\label{cor:vvvv}
The quadruple dual endofunctor of a finite tensor category
$\cala$ is naturally isomorphic to conjugation by the object $D_\Cala$:
  \be
  {-}^{\vee\vee\vee\vee} \,\cong\, D \oti {-} \oti D^{-1} .
  \label{eq:s4.4A}
  \ee
\end{cor}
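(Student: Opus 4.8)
The plan is to read off the claimed relation directly from the two descriptions of the left exact Nakayama functor $\pifu_\cala$ provided by Lemma \ref{lem:pi=D...}, combined with the invertibility of $D$ established in Lemma \ref{lem:D-inv}. First I would note that the two isomorphisms in \eqref{eq:pifu-D}, namely $\pifu_\cala \cong D \oti {}^{\vee\vee}{-}$ and $\pifu_\cala \cong {-}^{\vee\vee} \oti D$, combine into a single natural isomorphism
\[
D \oti {}^{\vee\vee}a \,\cong\, a^{\vee\vee} \oti D
\]
of functors in $a \iN \cala$. The idea is then to iterate this relation so that the double left dual on one side and the double right dual on the other are converted into a single quadruple right dual.

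Next I would substitute $a \mapsto a^{\vee\vee}$. Since the double left dual ${}^{\vee\vee}({-})$ and the double right dual $({-})^{\vee\vee}$ are mutually quasi-inverse equivalences, one has ${}^{\vee\vee}(a^{\vee\vee}) \cong a$ naturally; inserting this into the display above yields
\[
D \oti a \,\cong\, a^{\vee\vee\vee\vee} \oti D,
\]
which is exactly the relation ${-}^{\vee\vee\vee\vee} \oti D \cong D \oti {-}$ anticipated in the text. Finally, invoking Lemma \ref{lem:D-inv}, which says that $D$ is invertible with inverse $\widetilde D$, I would tensor on the right with $D^{-1}$ to obtain the desired natural isomorphism ${-}^{\vee\vee\vee\vee} \cong D \oti {-} \oti D^{-1}$, completing the proof of \eqref{eq:s4.4A}.

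The argument is essentially formal, so I do not expect a serious obstacle; the only point requiring care is naturality. One should check that the substitution $a \mapsto a^{\vee\vee}$ preserves naturality, which is automatic since it amounts to precomposing the natural transformation with the equivalence $({-})^{\vee\vee}$, and that the cancellation ${}^{\vee\vee}(a^{\vee\vee}) \cong a$ is implemented by the canonical duality isomorphisms, so that the resulting family remains natural in $a$. Coherence of all the isomorphisms involved is inherited from the coherence already recorded for the Nakayama functors in the preceding results.
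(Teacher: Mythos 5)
Your proposal is correct and follows essentially the same route as the paper: the text derives ${-}^{\vee\vee\vee\vee}\oti D \,{\cong}\, D\oti{-}$ from the two expressions for $\pifu_\cala$ in Lemma \ref{lem:pi=D...} and then cancels $D$ using its invertibility from Lemma \ref{lem:D-inv}. You merely spell out the intermediate substitution $a\mapsto a^{\vee\vee}$ and the cancellation ${}^{\vee\vee}(a^{\vee\vee})\cong a$ that the paper leaves implicit.
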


\begin{Remark}\label{rem:Radford}
Comparison with \cite[Lemma\,5.1]{shimi7} shows that
the object $D_\Cala$ defined by \eqref{D=end..} coincides with the
\emph{distinguished invertible object} of $\cala$ as defined in \cite[Def.\,3.1]{etno2}.
Accordingly, Corollary \ref{cor:vvvv} is a version of the Radford $S^4$ theorem for
finite tensor categories as obtained in \cite[Thm.\,3.3]{etno2}. 
Also note that in case $\cala \eq H\Mod$ is the category of \findim\ modules over a 
\findim\ Hopf algebra $H$, the object $D_\Cala$ is the one-dimensional $H$-mo\-du\-le that is 
furnished by the distinguished group-like element of the dual Hopf algebra $H^\wee$.
\end{Remark}

Recall from  Proposition \ref{prop:Nakayama-equiv} that a finite linear category $\calc$ is 
symmetric Frobenius iff its Nakayama functor is naturally isomorphic to the 
identity. If $\calc$ is even a finite multitensor category, then by evaluating Lemma 
\ref{lem:pi=D...} at the monoidal unit $\one \iN \calc$ we see that a necessary 
condition for $\calc$ to be symmetric Frobenius is that it is unimodular. Again 
from Lemma \ref{lem:pi=D...} we thus have 
the following theorem which generalizes the result \cite[Folgerung 3.3.2]{ObSch}, see also \cite{hump2} for a related result,  that a Hopf algebra has the structure of a  symmetric Frobenius algebra if and only if it is unimodular and the square of the antipode is an inner automorphism:

\begin{thm}\cite{shimiP}\label{thm:shimiP}
A finite multitensor category $\calc$ is symmetric Frobenius if and only if it is unimodular and
the double dual functor on $\calc$ is naturally isomorphic to the identity as linear functor.
\end{thm}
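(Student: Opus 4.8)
The plan is to read off both implications directly from the explicit description of the left exact Nakayama functor in Lemma~\ref{lem:pi=D...}, combined with the characterization of symmetric Frobenius categories in Proposition~\ref{prop:Nakayama-equiv}(ii). Recall that by Proposition~\ref{prop:Nakayama-equiv}(ii) the category $\calc$ is symmetric Frobenius precisely when $\pifu_\calc \cong \id_\calc$, and that by Lemma~\ref{lem:pi=D...} there is a natural isomorphism of linear functors
\[
\pifu_\calc \cong D_\calc \oti {}^{\vee\vee\!}{-} ,
\]
where $D_\calc = \pifu_\calc(\one)$ is, by Remark~\ref{rem:Radford}, the distinguished invertible object of $\calc$; thus $\calc$ is unimodular iff $D_\calc \cong \one$. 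The whole statement therefore reduces to showing that $D_\calc \oti {}^{\vee\vee\!}{-} \cong \id_\calc$ holds iff both $D_\calc \cong \one$ and the double dual functor is isomorphic to the identity.

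For the ``if'' direction I would assume $\calc$ is unimodular with trivial double dual, so that ${}^{\vee\vee\!}{-} \cong \id_\calc$ and $D_\calc \cong \one$; the displayed isomorphism then gives $\pifu_\calc \cong \one \oti {-} \cong \id_\calc$, whence $\calc$ is symmetric Frobenius by Proposition~\ref{prop:Nakayama-equiv}(ii). Here I would record the elementary fact that the double right dual ${-}^{\vee\vee}$ is naturally isomorphic to the identity iff the double left dual ${}^{\vee\vee\!}{-}$ is: the right and left dual functors are mutually quasi-inverse, so if one iterated dual squares to the identity then the two single-dual functors are isomorphic and hence so are the two double duals. This lets me pass freely between the two forms of the double dual appearing in the theorem and in Lemma~\ref{lem:pi=D...}.

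For the ``only if'' direction I would assume $\calc$ is symmetric Frobenius, so $\pifu_\calc \cong \id_\calc$ by Proposition~\ref{prop:Nakayama-equiv}(ii). Evaluating the isomorphism $\id_\calc \cong D_\calc \oti {}^{\vee\vee\!}{-}$ at $\one$ and using ${}^{\vee\vee\!}\one \cong \one$ yields $\one \cong D_\calc$, i.e.\ unimodularity. Substituting $D_\calc \cong \one$ back into the same isomorphism then gives $\id_\calc \cong \one \oti {}^{\vee\vee\!}{-} \cong {}^{\vee\vee\!}{-}$, so the double dual functor is naturally isomorphic to the identity as a linear functor.

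Since each implication is essentially a one-line consequence of the two cited results, there is no substantial computation to carry out. The only points that require care are bookkeeping ones: verifying that unimodularity is correctly identified with $D_\calc \cong \one$ through Remark~\ref{rem:Radford}; ensuring that the isomorphism of Lemma~\ref{lem:pi=D...} is understood as one of \emph{linear} (not merely monoidal) functors, so that the derived statement about the double dual is exactly the linear-functor assertion of the theorem; and the interchange between the left and right double dual noted above. I expect this mild left/right duality bookkeeping to be the only (very minor) obstacle.
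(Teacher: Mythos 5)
Your proposal is correct and follows essentially the same route as the paper, which likewise deduces the theorem by combining Proposition~\ref{prop:Nakayama-equiv}(ii) with Lemma~\ref{lem:pi=D...} and evaluating at the monoidal unit $\one$ to extract unimodularity. The left/right double dual bookkeeping you flag is harmless, since Lemma~\ref{lem:pi=D...} already records both forms $\pifu_\calc \cong D_\calc \oti {}^{\vee\vee\!}{-} \cong {-}^{\vee\vee} \oti D_\calc$.
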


In particular we have

\begin{cor}
A pivotal or spherical finite multitensor category is symmetric Frobenius
iff it is unimodular.
\end{cor}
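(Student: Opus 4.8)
The plan is to deduce the Corollary directly from Theorem~\ref{thm:shimiP} by verifying its two hypotheses for a pivotal or spherical finite multitensor category. Theorem~\ref{thm:shimiP} states that such a category $\calc$ is symmetric Frobenius if and only if it is unimodular and its double dual functor ${-}^{\vee\vee}$ is naturally isomorphic to the identity functor as a linear functor. Since the statement to be proven assumes $\calc$ is pivotal (or spherical) and unimodular, the unimodularity hypothesis is given for free, so the entire task reduces to showing that pivotality supplies precisely the second hypothesis, namely a natural isomorphism ${-}^{\vee\vee} \cong \id_\calc$ of linear functors.

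First I would recall that a \emph{pivotal structure} on a rigid monoidal category $\calc$ is, by definition, a monoidal natural isomorphism from the identity functor to the double dual functor, $\id_\calc \xrightarrow{\cong} {-}^{\vee\vee}$. A spherical structure is a pivotal structure satisfying the additional compatibility between left and right traces, so in particular every spherical category is pivotal. In either case the defining data already include a natural isomorphism between $\id_\calc$ and ${-}^{\vee\vee}$; forgetting the monoidal compatibility, this is in particular an isomorphism of \emph{linear} functors, which is exactly what the second hypothesis of Theorem~\ref{thm:shimiP} requires. Thus both hypotheses of that theorem are met, and we conclude that $\calc$ is symmetric Frobenius.

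For the converse direction, suppose conversely that a pivotal or spherical finite multitensor category $\calc$ is symmetric Frobenius. By Theorem~\ref{thm:shimiP} this forces $\calc$ to be unimodular (among other things), which gives the reverse implication of the biconditional. Hence for pivotal or spherical $\calc$ the conditions ``symmetric Frobenius'' and ``unimodular'' coincide, since the double-dual condition is automatically satisfied by the pivotal structure.

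The only genuinely delicate point will be the scope of the quantifier over structures: Theorem~\ref{thm:shimiP} asks for the \emph{existence} of a natural isomorphism ${-}^{\vee\vee} \cong \id_\calc$ as linear functors, and a pivotal structure provides exactly such an existence witness. One should be mindful that being symmetric Frobenius is a property of $\calc$ as a linear category (via Definition~\ref{Def:symFrobcat}), whereas pivotality is extra monoidal data; the argument only uses that this data yields a \emph{linear} natural isomorphism, and no compatibility of the pivotal structure with the Frobenius form is claimed or needed. Since this matching of hypotheses is immediate once the definition of pivotal structure is unwound, there is no substantive obstacle, and the corollary follows at once from Theorem~\ref{thm:shimiP}.
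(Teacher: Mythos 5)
Your proposal is correct and matches the paper's (implicit) argument: the corollary is stated as an immediate consequence of Theorem~\ref{thm:shimiP}, precisely because a pivotal (hence also a spherical) structure supplies a natural isomorphism between the identity and the double dual functor, so that only the unimodularity condition remains. Your additional remark that only the underlying linear natural isomorphism is needed, not its monoidal compatibility, is a correct and worthwhile clarification.
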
 

If $\cala$ is just a finite monoidal category, with the monoidal structure not necessarily 
a biexact functor, we can still define the objects $D_\Cala$ and $\widetilde{D}_\Cala$
by the formulas \eqref{D=end..}. They are then no longer necessarily invertible, 
but in case either left or right duals exist in $\cala$, the corresponding
isomorphisms in Lemma \ref{lem:pi=D...} are still valid. 
 
\begin{Example}\label{ex:cala=Funle(calm,calm)}
An interesting specific case is obtained when $\cala$ is the finite monoidal category
$\Funle(\calx,\calx) $ of left exact endofunctors of a finite linear category 
$\calx$. {}From Lemma \ref{Lemma:Nak-endo-fun} it follows that 
  \be
  D_{\Funle(\calx,\calx)} = \pifu_{\Funle(\calx,\calx)} (\id)
  \,\cong\, \pifu_\calx \circ  \pifu_\calx \,.
  \label{eq:D-end}
  \ee
Albeit $\Funle(\calx,\calx)$ is, in general, not a finite multitensor 
category owing to the lack of right duals, it 
follows that $D_{\Funle(\calx,\calx)}$ is invertible if $\pifu_{\calx}$ is an equivalence.
\end{Example}

\begin{Remark}
{}From Theorem \ref{thm:picu.F=Fll.pifu} we learn that for $F\colon \cala \To \calb$ an 
exact functor between finite monoidal categories
having a left exact left adjoint and satisfying $F(\one_\Cala) \eq \one_\calb$ we have
  \be
  D_\calb \cong F^\lla(D_\Cala) \,.
  \ee
If $F$ is an equivalence, so that we can take $F^\la \eq F^\ra$, this reduces to
$D_\calb \,{\cong}\, F(D_\Cala)$, thereby reproducing Corollary 3.14 of \cite{shimi8}.
\end{Remark}

Inspired by \cite{shimiP} we apply our results to monoidal functors between finite multitensor 
categories to relate their left and right adjoints and to simplify and unify proofs of 
some known results. 
First we recall a general 

\begin{Lemma}\label{Lemma:Dual-Adjoint}
Let $F\colon \cala \To \calb$ be an exact monoidal functor between finite multitensor categories. 
\\[2pt]
(i)\, There exist canonical isomorphisms
  \be
  F^\ra(b^{\vee}) \,\cong\, \big( F^\la(b) \big)^{\!\vee}
  \ee
natural in $b \iN \calb$. 
Analogously there are isomorphisms $ F^\ra({}^{\vee} b) \,{\cong}\, {}^{\vee\!}(F^\la(b))$,
and similar isomorphisms with the role of left and right adjoints interchanged.
\\[3pt]
(ii)\, $F^\la$ and $F^\ra$ commute with the double dual functors of $\calb$.
\end{Lemma}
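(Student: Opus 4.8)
The plan is to prove (i) by a Yoneda computation and to deduce (ii) from (i) by a short formal argument. First I would record the two structural inputs. Since $F$ is exact, Corollary \ref{cor:leftexact-etc} guarantees that both adjoints $F^\la$ and $F^\ra$ exist. Since $F$ is (strong) monoidal between rigid categories, it preserves duals: there are canonical isomorphisms $F(a^\vee) \cong F(a)^\vee$ and $F(\Vee a) \cong \Vee{F(a)}$, natural in $a \iN \cala$, obtained by transporting the evaluation/coevaluation data along the monoidal structure of $F$.

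To establish $F^\ra(b^\vee) \cong (F^\la(b))^\vee$ I would, for fixed $b \iN \calb$, identify the two functors $a \mapsto \Ho\cala a {F^\ra(b^\vee)}$ and $a \mapsto \Ho\cala a {(F^\la(b))^\vee}$ through the chain
\begin{align*}
\Ho\cala a {F^\ra(b^\vee)} &\cong \Ho\calb {F(a)}{b^\vee} \cong \Ho\calb {F(a)\oti b}\one \cong \Ho\calb b{\Vee{F(a)}}\\
&\cong \Ho\calb b{F(\Vee a)} \cong \Ho\cala {F^\la(b)}{\Vee a} \cong \Ho\cala {a\oti F^\la(b)}\one \cong \Ho\cala a{(F^\la(b))^\vee}.
\end{align*}
Here the first and fifth isomorphisms are the defining adjunctions for $F^\ra$ and $F^\la$; the second, third, sixth and seventh are the standard hom-space isomorphisms supplied by the duality data $\evr,\coevr,\evl,\coevl$ (moving $b$, resp.\ $a$, resp.\ $F^\la(b)$, across the unit); and the fourth uses that $F$ preserves left duals. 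Every arrow is natural in $a$ and in $b$, so the Yoneda lemma yields the asserted canonical isomorphism, natural in $b$. The companion statement $F^\ra(\Vee b) \cong \Vee{(F^\la(b))}$ is obtained by running the identical chain with the roles of left and right duals interchanged, and the versions with $F^\la$ and $F^\ra$ exchanged follow the same way (using $F(a^\vee) \cong F(a)^\vee$ in the dual-preservation step).

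For (ii) I would give the quickest formal deduction. Because $F$ preserves duals it commutes with the double right dual, i.e.\ there is a natural isomorphism $F \cir (-)^{\vee\vee} \cong (-)^{\vee\vee} \cir F$ of functors $\cala \To \calb$, in which both $(-)^{\vee\vee}$ are the (monoidal) auto-equivalences of $\cala$ and $\calb$. Taking left adjoints of this isomorphism, and using that the left adjoint of a composite reverses the order while the left adjoint of an equivalence is its quasi-inverse, produces $F^\la \cir (-)^{\vee\vee} \cong (-)^{\vee\vee} \cir F^\la$; taking right adjoints gives the same statement for $F^\ra$. Equivalently, (ii) may be read off by iterating (i) and its interchanged companion, as in $F^\ra(b^{\vee\vee}) \cong (F^\la(b^\vee))^\vee \cong ((F^\ra(b))^\vee)^\vee = (F^\ra(b))^{\vee\vee}$.

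The genuine content sits entirely in the dual-preservation isomorphisms for $F$; everything else is formal rigid-category bookkeeping. Accordingly I expect the main obstacle to be purely one of convention management: keeping track of left versus right duals and of the direction of each adjunction and each evaluation/coevaluation isomorphism, so that the chain closes up on the nose and the resulting isomorphisms are verified to be natural (and, where claimed, coherent) rather than merely pointwise. Once the conventions fixed in the paragraph on dualities are used consistently, no analytic or finiteness input beyond the existence of the adjoints is needed.
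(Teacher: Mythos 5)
Your proposal is correct and follows exactly the route of the paper's (much terser) proof: a Yoneda-type argument using that the monoidal functor $F$ commutes with the duality functors for (i), and deducing (ii) by applying (i) together with its left/right-interchanged companion. Your explicit seven-step hom-space chain and the remark on convention management simply flesh out what the paper leaves implicit.
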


\begin{proof}
(i) follows directly from a Yoneda type argument using that $F$, being monoidal, commutes 
with the duality functors. (ii) is obtained by applying (i) twice.
\end{proof}

We can thus state

\begin{Proposition}
Let $F\colon \cala \To \calb$ be an exact monoidal functor between finite multitensor categories 
whose left adjoint is again left exact. Then the right adjoint of $F$
can be expressed in terms of the left adjoint as
  \be
  F^\ra(b) \,\cong\, D_\Cala^{-1} \otimes F^\la(D_\calb \oti b)
  \label{eq:right-via-left}
  \ee
for $b\iN\calb$, while the double left adjoint of $F$ obeys
  \be
  F^{\lla}(a) \,\cong\, D_\calb \otimes F(D_\Cala^{-1} \oti a)
  \label{eq:doubleleft-formula}
  \ee
for $a\iN\cala$.  
\end{Proposition}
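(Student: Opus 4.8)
The plan is to feed the Eilenberg-Watts relations between the two adjoints of $F$ and the Nakayama functors into the explicit description of the latter from Lemma \ref{lem:pi=D...}. All of this is legitimate because $\cala$ and $\calb$, being finite multitensor categories, are exact as bimodule categories over themselves; hence by Corollary \ref{cor:Mexact-Naka-exact} the functors $\pifu_\Cala,\pift_\Cala$ and $\pifu_\calb,\pift_\calb$ are mutually quasi-inverse equivalences, the objects $D_\Cala,D_\calb$ are invertible with $\widetilde D_\Cala\cong D_\Cala^{-1}$ (Lemma \ref{lem:D-inv}), and $F$, being monoidal, commutes with all dualities. I will use throughout the elementary cancellations $({}^{\vee\vee\!}x)^{\vee\vee}\cong x$, ${}^{\vee\vee\!}(x^{\vee\vee})\cong x$, and the fact that an invertible object is fixed by either double dual.

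For \eqref{eq:right-via-left} I would start from Corollary \ref{cor:left-right-rel}(ii), which (since $\pifu_\Cala$ is an equivalence) gives $F^\ra\cong\pift_\Cala\circ F^\la\circ\pifu_\calb$. Inserting the forms $\pifu_\calb\cong D_\calb\oti{}^{\vee\vee\!}{-}$ and $\pift_\Cala\cong\widetilde D_\Cala\oti{-}^{\vee\vee}$ from Lemma \ref{lem:pi=D...} yields
\[ F^\ra(b)\,\cong\,\widetilde D_\Cala\oti\big(F^\la(D_\calb\oti{}^{\vee\vee\!}b)\big)^{\vee\vee}. \]
The crucial step is that $F^\la$ commutes with the double right dual (Lemma \ref{Lemma:Dual-Adjoint}(ii)), and the latter is monoidal, so $\big(F^\la(D_\calb\oti{}^{\vee\vee\!}b)\big)^{\vee\vee}\cong F^\la\big(D_\calb^{\vee\vee}\oti({}^{\vee\vee\!}b)^{\vee\vee}\big)\cong F^\la(D_\calb\oti b)$, where $D_\calb^{\vee\vee}\cong D_\calb$ uses invertibility of $D_\calb$. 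With $\widetilde D_\Cala\cong D_\Cala^{-1}$ this is exactly $F^\ra(b)\cong D_\Cala^{-1}\oti F^\la(D_\calb\oti b)$.

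For \eqref{eq:doubleleft-formula} the corresponding input is Theorem \ref{thm:picu.F=Fll.pifu}, giving $F^{\lla}\cong\pifu_\calb\circ F\circ\pift_\Cala$ (again using that $\pifu_\Cala$ is an equivalence with quasi-inverse $\pift_\Cala$). Here I would instead choose $\pift_\Cala\cong\widetilde D_\Cala\oti{-}^{\vee\vee}$ and $\pifu_\calb\cong D_\calb\oti{}^{\vee\vee\!}{-}$, so that after pushing the double right dual through the monoidal $F$ one gets
\[ F^{\lla}(a)\,\cong\,D_\calb\oti{}^{\vee\vee\!}\big(F(\widetilde D_\Cala)\oti F(a)^{\vee\vee}\big)\,\cong\,D_\calb\oti{}^{\vee\vee\!}F(\widetilde D_\Cala)\oti{}^{\vee\vee\!}\big(F(a)^{\vee\vee}\big). \]
Now ${}^{\vee\vee\!}(F(a)^{\vee\vee})\cong F(a)$, while ${}^{\vee\vee\!}F(\widetilde D_\Cala)\cong F(\widetilde D_\Cala)\cong F(D_\Cala^{-1})$ because $F(\widetilde D_\Cala)$ is invertible. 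Reassembling with $F$ monoidal gives $F^{\lla}(a)\cong D_\calb\oti F(D_\Cala^{-1})\oti F(a)\cong D_\calb\oti F(D_\Cala^{-1}\oti a)$, which is \eqref{eq:doubleleft-formula}.

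The main obstacle is not any deep point but the bookkeeping of dualities: one must pair the two forms of each Nakayama functor in Lemma \ref{lem:pi=D...} so that an inner double \emph{left} dual meets an outer double \emph{right} dual (or vice versa) and cancels outright; the wrong pairing instead leaves a residual quadruple dual, forcing a detour through the conjugation formula of Corollary \ref{cor:vvvv} and producing the $D$-factors in the wrong tensor positions. A secondary point, needed only for \eqref{eq:doubleleft-formula}, is verifying that $F^{\lla}$ is left exact so that Theorem \ref{thm:picu.F=Fll.pifu} applies; this follows since the duality functors and the Nakayama functors are exact, and it is confirmed a posteriori by the right-hand side of \eqref{eq:doubleleft-formula}, which is manifestly exact in $a$.
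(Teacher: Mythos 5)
Your proof is correct and takes essentially the same route as the paper's: both start from $F^\ra\cong\pift_\Cala\circ F^\la\circ\pifu_\calb$ (Corollary \ref{cor:left-right-rel}) resp.\ Theorem \ref{thm:picu.F=Fll.pifu}, insert the descriptions of the Nakayama functors from Lemma \ref{lem:pi=D...}, and cancel the double duals using Lemma \ref{Lemma:Dual-Adjoint} and the invertibility of $D$. The only (immaterial) difference is the order of the cancellation -- the paper rewrites $D_\calb\oti{}^{\vee\vee\!}b$ as ${}^{\vee\vee\!}(D_\calb\oti b)$ before applying $F^\la$, whereas you push the outer double dual coming from $\pift_\Cala$ through $F^\la$ -- and you write out the second formula explicitly where the paper merely says it follows analogously.
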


\begin{proof}
For the first statement we invoke with Corollary \ref{cor:left-right-rel} to compute
  \be
  \bearll
  F^{\ra}(b) \!\!\!&
  \stackrel{\eqref{eq:Fra=N.Fla.N}} \cong \pift_\cala(F^\la(\pifu_\calb(b)))
  \stackrel{\eqref{eq:pifu-D}} \cong \pift_\cala(F^\la(D_\calb\oti {}^{\vee\vee}b))
  \cong \pift_\cala (F^\la({}^{\vee\vee}(D_\calb \oti b)))
  \Nxl4 &
  \hspace*{5.8pt}\cong\, \pift_\cala \big( {}^{\vee\vee}(F^\la(D_\calb \oti b)) \big)
  \,\cong\, D_\Cala^{-1} \otimes F^\la(D_\calb \oti b) \,.
  \eear
  \ee
Here step three holds because ${}^{\vee \vee\!}D_\calb \,{\cong}\, D_\calb$ by 
invertibility of $D_\calb$, and Lemma \ref{Lemma:Dual-Adjoint} is used in the fourth step.
The isomorphism \eqref{eq:doubleleft-formula} follows analogously.
\end{proof}

When evaluated at the monoidal unit of a finite tensor category, the equality
\eqref{eq:doubleleft-formula} reproduces \cite[Thm.\,4.8]{shimi8}. Also, if we apply
\eqref{eq:right-via-left} to the forgetful functor $U\colon \mathcal{Z}(\cala) \To \cala$
from the Drinfeld center of a finite tensor category, then using that $\mathcal{Z}(\cala)$ is
factorizable \cite[Prop.\,4.4]{etno2} and thus unimodular, we obtain Lemma 4.7 of \cite{shimi7}. 


\subsection{Inner Hom and relative Serre functors}\label{subsec:InnerHom}

Recall (see e.g.\ \cite{reVa3}) that a right Serre functor on a linear Hom-finite additive 
category $\calc$ is an additive endofunctor $G$ together with a natural family of isomorphisms
$\Ho\calc cd \,{\xrightarrow{~\cong~}}\, \Hox\calc d{G(c)}$; left Serre functors are
defined analogously.
Right Serre functors are fully faithful; if a right Serre functor is even an equivalence,
then it is called a \emph{Serre functor}. The category $\calc$ has a Serre functor iff the 
dual Hom functors $\Hox\calc c-$ and $\Hox\calc -c$ are representable for every $c \iN \calc$.
For a finite linear category $\calc$ it follows immediately from Corollary 
\ref{cor:leftexact-etc} that the existence of a Serre functor is equivalent to semisimplicity.

\begin{Remark}
Assume for a moment that a finite linear category 
$\calm$ admits left and right Serre functors $\Sl_\calm$ and $\Sr_\calm$. 
Then they are equivalences and may be taken to form an adjoint pair. The calculation 
  \be
  \bearll\dsty
  \pift_\calm(x) \,= \int^{y\in\calm}\!\! \Hox\calm xy \oti y
  \!\! &\dsty
  \cong \int^{y\in\calm}\!\! \Ho\calm {\Sl_\calm(y)}x \oti y
  \Nxl1 &\dsty
  \cong \int^{y\in\calm}\!\! \Ho\calm y {(\Sl_\calm)^\ra(x)} \oti y 
  \stackrel{\eqref{eq:delta-property}}\cong {(\Sl_\calm)^\ra(x)} \,,
  \eear
  \ee
where the first equality is the definition of the Nakayama functor
$\pift_\calm$ and the second the definition of the Serre functor $\Sl_\calm$, then shows 
that the Nakayama functor $\pift_\calm$ of $\calm$ is isomorphic to $\Sr_\calm$, and 
analogously one sees that that $\pifu_\calm$ is isomorphic to $\Sl_\calm$.
Now as noted above $\calm$ in fact does \emph{not} possess left and right Serre functors 
unless it is semisimple. Still, by this observation it is tempting to think of the Nakayama
functors $\pifu_\calm$ and $\pift_\calm$ as substitutes for left and right Serre functors
of $\calm$.  This fits with the description of the Nakayama functors in Equations
\eqref{eq:pifu-vis-evcoev}
and \eqref{eq:pift-vis-evcoev}, which look familiar from the relation between Serre functors
and dualizability structure. One might be tempted to conclude 
that the Nakayama functors always provide Serre functors. Recall, however, that
the Deligne product of a left exact and a right exact functor is not, in general, defined; 
as a consequence, the right hand sides of \eqref{eq:pifu-vis-evcoev}
and \eqref{eq:pift-vis-evcoev} do not, in general, constitute Serre functors.
\end{Remark}

   \begin{Remark}
   An action of the Serre automorphism on the core of fully-dualizable objects of 
   (a skeletal version of) the two-dimensional framed bordism bicategory realizes
   \cite{david,heVa2} an $\mathrm{SO}(2)$-action 
       whose homotopy fixed points describe oriented topological field theories.
   This appears to provide a geometric description of the Nakayama functor in the semisimple
   case.  But our results, obtained in a purely algebraic setting, do not depend on any
   semisimplicity assumption.
   \end{Remark}

In the rest of this subsection $\cala$ is a finite tensor category.
Recall that for a left $\cala$-mo\-du\-le $\calm \eq \acalm$ the inner Hom $\IHom(-,-)$ is 
defined via a family of natural isomorphisms $\Hom_{\cala}(a,\IHom(m,n)) \,{\cong}\, 
\Ho\calm {a.m}n$ for $m,n\iN\calm$ and $a \iN \cala$. This naturally defines a left exact 
functor $\IHom(-,-)\colon \calmopp \Times \calm \To \cala$. The module category $\calm$ is an
exact $\cala$-module if and only if the functor $\IHom(-,-)$ is also right exact
\cite[Cor.\,3.15\,\&\,Prop.\,3.16]{etos}.  
  
The notion of inner Hom allows us to introduce a relative version of (left or right)
Serre functor, as an adaptation of \cite[Def.\,4.29]{schaum5}:

\begin{Definition}
Let $\calm$ be a left $\cala$-module. A \emph{right relative Serre functor} on $\calm$
is an en\-do\-functor $\Sr_\calm$ of $\calm$ together with a family 
  \be
  \IHom(m,n)^{\vee} \xrightarrow{~\cong~} \IHom(n,\Sr_\calm(m)) 
  \label{eq:Serref}
  \ee
of isomorphisms natural in $m,n\iN\calm$.
Analogously, a \emph{left relative Serre functor} $\Sl_\calm$ comes with a family
  \be
  {}^{\vee}\IHom(m,n) \xrightarrow{~\cong~} \IHom(\Sl_\calm(n),m)
  \label{eq:leftSerre}
  \ee
of natural isomorphisms.
\end{Definition}

\begin{Lemma}
A right relative Serre functor on $\calm$ is a twisted module functor in the sense that there 
are coherent natural isomorphisms 
  \be
  \Sr_\calm(a.m) \,\cong\, a^{\vee\vee} .\, \Sr_\calm(m) \,.
  \label{eq:Sr(am)=avv.Sr(m)}
  \ee
Similarly there are coherent natural isomorphisms 
$\Sl_\calm(a.m) \,{\cong}\, {}^{\vee\vee}a \,.\, \Sl_{\calm}(m) $.
\end{Lemma}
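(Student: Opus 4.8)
The plan is to reduce the statement to two module-structure properties of the inner Hom functor and then conclude by a Yoneda-type argument through the defining adjunction $\Ho\cala x{\IHom(m,n)}\cong\Ho\calm{x.m}n$. First I would establish the auxiliary isomorphisms
\[
\IHom(a.m,n)\cong\IHom(m,n)\oti a^\vee \qquad\text{and}\qquad \IHom(m,a.n)\cong a\oti\IHom(m,n),
\]
natural in all arguments. Both follow from the defining adjunction together with the rigidity adjunctions $\Ho\cala{x\oti a}y\cong\Ho\cala x{y\oti a^\vee}$ (built from $\evr,\coevr$) and $\Ho\cala{a\oti x}y\cong\Ho\cala x{{}^\vee a\oti y}$ (built from $\evl,\coevl$) and the associativity constraint $\mu$; the Yoneda lemma in $\cala$ then yields the displayed isomorphisms. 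For the second one the adjunction of Remark \ref{Remark:adjoint-action}, giving $F_{a^\vee}$ as a left adjoint of $F_a$, provides an equivalent route, since $\Ho\calm{x.m}{a.n}\cong\Ho\calm{(a^\vee\oti x).m}n$.

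With these in hand I would compute, using the defining family \eqref{eq:Serref} and the rigidity identity $(X\oti Y)^\vee\cong Y^\vee\oti X^\vee$,
\[
\IHom(n,\Sr_\calm(a.m))\,\cong\,\big(\IHom(m,n)\oti a^\vee\big)^\vee\,\cong\,a^{\vee\vee}\oti\IHom(n,\Sr_\calm(m)),
\]
and then rewrite the right-hand side by the second auxiliary isomorphism as $\IHom(n,a^{\vee\vee}.\Sr_\calm(m))$, all naturally in $n$. To pass from this to $\Sr_\calm(a.m)\cong a^{\vee\vee}.\Sr_\calm(m)$ I would feed the natural isomorphism back through the defining adjunction: specializing at $x=\one_\cala$ (and using $\Ho\calm{\one.n}p\cong\Ho\calm np$) turns it into $\Ho\calm n{\Sr_\calm(a.m)}\cong\Ho\calm n{a^{\vee\vee}.\Sr_\calm(m)}$ natural in $n\iN\calm$, whence the claim by the Yoneda lemma in $\calm$. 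The left relative Serre functor is handled symmetrically: from \eqref{eq:leftSerre} and the left analogue ${}^\vee(X\oti Y)\cong{}^\vee Y\oti{}^\vee X$ one gets
\[
\IHom(\Sl_\calm(a.n),m)\,\cong\,{}^\vee\IHom(m,a.n)\,\cong\,\IHom(\Sl_\calm(n),m)\oti{}^\vee a,
\]
which by the first auxiliary isomorphism equals $\IHom({}^{\vee\vee}a.\Sl_\calm(n),m)$ because $({}^{\vee\vee}a)^\vee\cong{}^\vee a$; the analogous Yoneda step, now in the first argument of $\IHom$, then gives $\Sl_\calm(a.m)\cong{}^{\vee\vee}a.\Sl_\calm(m)$.

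The computation is routine in structure; the step requiring the most care is the left-versus-right dual bookkeeping in the auxiliary isomorphisms, since with the conventions fixed for $\evl,\coevl,\evr,\coevr$ one must track precisely which dual lands on which tensor factor and then verify that the identifications $(a^\vee)^\vee=a^{\vee\vee}$ and $({}^{\vee\vee}a)^\vee\cong{}^\vee a$ combine as claimed. Finally I would record coherence: the displayed isomorphisms are assembled entirely from the coherent duality isomorphisms of $\cala$, the associator $\mu$ of the module category, and the natural family defining $\Sr_\calm$ (resp.\ $\Sl_\calm$), and the double-dual functor is monoidal, so compatibility of the isomorphism for $a\oti a'$ with those for $a$ and $a'$ follows from the standard coherence of these ingredients by a direct diagram chase, which I would state rather than expand.
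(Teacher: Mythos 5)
Your proposal is correct and follows essentially the same route as the paper: both establish (or invoke) the module‑structure isomorphisms $\IHom(a.m,n)\cong\IHom(m,n)\oti a^\vee$ and $\IHom(m,a.n)\cong a\oti\IHom(m,n)$, dualize, insert the defining isomorphism \eqref{eq:Serref} (resp.\ \eqref{eq:leftSerre}), and conclude by a Yoneda argument — your evaluation at $\Ho\cala\one-$ is just a concrete implementation of the enriched Yoneda lemma the paper cites. The only difference is that you spell out the auxiliary $\IHom$ isomorphisms and the dual bookkeeping in more detail than the paper does (which, incidentally, correctly supplies a $(-)^\vee$ on $\IHom(m,n)$ that the paper's displayed chain omits).
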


\begin{proof}
We have a chain
  \be
  \bearll 
  \IHom(n,\Sr_\calm(a.m)) \!\!& \xrightarrow{~\cong~} \IHom(a.m,n)^{\vee}
  \xrightarrow{~\cong~} a^{\vee\vee} \otimes \IHom(m,n)^\vee
  \Nxl2 &
  \xrightarrow{~\cong~} a^{\vee \vee} \otimes \IHom(n,\Sr_\calm(m))
  \xrightarrow{~\cong~} \IHom(n,a^{\vee \vee}.\,\Sr_\calm(m))
  \label{eq:right-Serre}
  \eear
  \ee
of isomorphisms which is natural in $m,n  \iN \calm$ and in $a \iN \cala$. By the enriched Yoneda 
lemma (see e.g.\ \cite[Lemma\,4.11]{schaum5} for an adaption to module categories) this induces 
the natural isomorphisms \ref{eq:Sr(am)=avv.Sr(m)}, and these are by construction coherent 
with respect to the monoidal structure. The statement for $\Sl_{\calm}$ is derived analogously. 
\end{proof}

It follows directly from the definitions that if $\calma$ has both a left and a right 
relative Serre functor, then these are quasi-inverse to each other. 
Since the inner Hom functor is left exact in both arguments, the functors 
$\IHom(-,-)^{\vee}$ and ${}^{\vee}\IHom(-,-)$ from $\calm \Times \calmopp$ to $\cala$
are right exact in both arguments. It follows that if 
a module category has a left and a right Serre functor, then the inner Hom functor is exact 
so that, by the proof of Proposition 3.13 of \cite{etos}, $\calma$ is exact. 
Conversely, if $\calma$ is an exact module category, then the 
functors $\IHom(-,-)^{\vee}$ and $ {}^{\vee}\IHom(-,-)$
are exact and hence, by the enriched Yoneda lemma, representable. Thus an exact 
module category has both relative Serre functors. We summarize these findings in

\begin{Proposition}\label{prop:exirelSerre}
An $\cala$-module $\calma$ has a left and a right relative Serre functor if and only if 
it is exact as a module category over $\cala$.
\end{Proposition}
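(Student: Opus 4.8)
The plan is to prove the two implications separately, in both cases exploiting the characterization (recalled above from \cite[Cor.\,3.15\,\&\,Prop.\,3.16]{etos}) that the $\cala$-module $\calma$ is exact precisely when the always left exact functor $\IHom(-,-)\colon \calmopp \Times \calm \To \cala$ is in addition right exact, i.e.\ exact. The auxiliary functors $\IHom(-,-)^\vee$ and ${}^\vee\IHom(-,-)$, regarded as functors $\calm \Times \calmopp \To \cala$, are always right exact in both arguments, being obtained from the left exact functor $\IHom$ by post-composing with the exact contravariant equivalence $(-)^\vee$ (respectively ${}^\vee(-)$), which interchanges left and right exactness. This observation is what links the two relative Serre conditions to exactness.

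For the forward direction, suppose that $\calma$ admits both a left and a right relative Serre functor. As noted just above, $\Sl_\calm$ and $\Sr_\calm$ are then quasi-inverse to each other, hence equivalences, and in particular exact. I would feed this exactness into the defining isomorphism \eqref{eq:Serref}: the right-hand side $(m,\ol n) \mapsto \IHom(n,\Sr_\calm(m))$ is left exact in both arguments of $\calm \Times \calmopp$, being built from the left exact functor $\IHom$ and the exact functor $\Sr_\calm$. Since it is isomorphic to $\IHom(m,n)^\vee$, which is right exact in both arguments, the functor $\IHom(-,-)^\vee$ is simultaneously left and right exact, i.e.\ exact. Applying the exact equivalence ${}^\vee(-)$ and using ${}^\vee(X^\vee) \cong X$ then shows that $\IHom$ itself is exact, so that $\calma$ is exact by the cited result from \cite{etos}.

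For the converse, assume $\calma$ is exact. Then $\IHom$ is exact, and hence so are $\IHom(-,-)^\vee$ and ${}^\vee\IHom(-,-)$. For fixed $m\iN\calm$ the contravariant functor $n \mapsto \IHom(m,n)^\vee$ from $\calm$ to $\cala$ is thus exact, and I would invoke the enriched Yoneda lemma (in the module-category form of \cite[Lemma\,4.11]{schaum5}, as used in the proof of \cite[Prop.\,3.13]{etos}) to conclude that it is representable by an inner Hom: there is an object $\Sr_\calm(m)$ together with a natural isomorphism $\IHom(n,\Sr_\calm(m)) \cong \IHom(m,n)^\vee$. Naturality in $m$ upgrades $m \mapsto \Sr_\calm(m)$ to an endofunctor carrying the structure \eqref{eq:Serref} of a right relative Serre functor; an entirely parallel argument using ${}^\vee\IHom(-,-)$ produces the left relative Serre functor $\Sl_\calm$ with the isomorphisms \eqref{eq:leftSerre}.

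The main obstacle I anticipate lies in the converse direction, namely in making the representability step fully rigorous: one must verify that exactness of the $\cala$-valued functor $n \mapsto \IHom(m,n)^\vee$ is exactly the hypothesis under which the enriched Yoneda lemma applies, and that the representing objects $\Sr_\calm(m)$ assemble functorially in $m$ into an endofunctor. The variance bookkeeping — tracking which slot is co- versus contravariant and how $(-)^\vee$ and ${}^\vee(-)$ exchange left and right exactness when one passes between $\calmopp \Times \calm$ and $\calm \Times \calmopp$ — is the other place where care is required, although it is routine once the conventions are fixed.
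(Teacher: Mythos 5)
Your proposal is correct and follows essentially the same route as the paper: both directions hinge on the characterization that $\calma$ is exact iff $\IHom$ is exact, with the forward implication extracted from the exactness of the quasi-inverse relative Serre functors via the defining isomorphisms, and the converse obtained from exactness of $\IHom(-,-)^\vee$ and ${}^\vee\IHom(-,-)$ plus representability through the enriched Yoneda lemma. The only difference is that you spell out the variance and exactness bookkeeping in somewhat more detail than the paper's terser presentation.
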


Similarly as the Hom functor, the inner Hom functor $\IHom$ is left exact in each argument, while 
${}^{\vee}\IHom$ is right exact. The respective adjoint functors can be given explicitly. To
see this, note that every object $m \iN \calm$ furnishes a module functor
  \be
  \begin{array}{lr}
  F_m\colon & {}_\cala\cala \longrightarrow \acalm~ \Nxl1 &
  a \longmapsto a.m \,. 
  \eear
  \label{eq:defFm}
  \ee
We have

\begin{Lemma}\label{lem:Fm-adj}
For $\calm$ an exact module category over a finite tensor category $\cala$ and $m \iN \calm$,
the functor $\IHom(m,-)$ is right adjoint to the functor $F_m$,
and ${}^\vee\IHom(-,m)$ is left adjoint to $F_m$.
\end{Lemma}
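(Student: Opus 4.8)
The plan is to identify the two adjoints explicitly: the right adjoint will be essentially the definition of the inner Hom, while the left adjoint will follow from a short duality computation. As a preliminary I would record that $F_m$ is exact. Since the $\cala$-action is exact in its first variable, the functor $a \,{\mapsto}\, a.m$ is exact, so by Corollary \ref{cor:leftexact-etc} it admits both a left and a right adjoint; it thus suffices to match these adjoints with the claimed functors.

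For the right adjoint, the defining adjunction of the inner Hom reads
\be
\Ho\cala a {\IHom(m,n)} \,\cong\, \Ho\calm {a.m}n \,=\, \Ho\calm {F_m(a)}n \,,
\ee
naturally in $a\iN\cala$ and $n\iN\calm$, which exhibits $\IHom(m,-)$ as right adjoint to $F_m$. No further input is needed here.

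For the left adjoint I would assemble the chain of natural isomorphisms
\be
\bearll
\Ho\calm n{a.m} \!\!
&\cong\, \Ho\calm {a^\vee.n}m
\,\cong\, \Ho\cala {a^\vee}{\IHom(n,m)}
\Nxl2 &
\cong\, \Ho\cala {\Vee\IHom(n,m)}a \,,
\eear
\ee
natural in $a$ and $n$. The first isomorphism is Lemma \ref{lem:duals-Homcalm}: writing $a \cong \Vee(a^\vee)$ it gives $\Ho\calm{a^\vee.n}m \cong \Ho\calm n{\Vee(a^\vee).m} \cong \Ho\calm n{a.m}$. The second is again the defining adjunction of the inner Hom, now applied to the object $a^\vee$. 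The third uses that the left-dual functor is a contravariant self-equivalence of $\cala$ together with $\Vee(a^\vee) \cong a$. By the Yoneda lemma this identifies $\Vee\IHom(-,m)$ as left adjoint to $F_m$.

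The only point requiring care, and hence the main (if modest) obstacle, is the duality bookkeeping: keeping left and right duals straight and checking that each isomorphism in the chain is natural in $a$ (and coherent in $m$), so that the composite is a genuine adjunction rather than a bare objectwise isomorphism. Everything else is formal, and I would not grind through the naturality diagrams beyond observing that each of the three steps is built from natural isomorphisms already established in the excerpt.
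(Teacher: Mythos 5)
Your proof is correct and follows essentially the same route as the paper: the right adjoint is read off directly from the defining adjunction of the inner Hom, and the left adjoint is obtained from the same three-step chain (duality in $\cala$, the inner-Hom adjunction applied to $a^\vee$, and the isomorphism \eqref{eq:a.m--Veea.m'} of Lemma \ref{lem:duals-Homcalm}), merely written in the reverse order. The concluding appeal to the Yoneda lemma matches the paper as well.
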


\begin{proof}
The first of the claims is shown by the calculation
  \be
  \Ho\cala a {F_m^\ra(n)} \cong \Ho\calm {F_m(a)} n \cong \Ho\cala a {\IHom(m,n)} \,,
  \ee
combined with the Yoneda lemma. Similarly, after noticing that
  \be
  \Ho\cala {{}^\vee\IHom(n,m)} a \cong \Ho\cala {a^\vee} {\IHom(n,m)}
  \cong \Ho\calm {a^\vee\!.n} m
  \stackrel{\eqref{eq:a.m--Veea.m'}}\cong \Ho\calm n {a.m} \,,
  \ee
the second claim follows by
$\Ho\cala {F_m^\la(n)} a \,{\cong}\, \Ho\calm n {F_m(a)} \,{\cong}\, 
\Ho\cala {{}^\vee\IHom(n,m)} a$.
\end{proof}

If we regard the finite tensor category $\cala$ as an (exact) module category over itself, 
then the relative Serre functors are given by the right and left double dual functor, 
respectively. Accordingly we can regard the relative Serre functors as generalizations of 
the double dual functors. Then, recalling from Corollary \ref{cor:Mexact-Naka-exact} that 
for an exact module category the Nakayama functors are equivalences, we obtain the following 
result which generalizes Lemma \ref{lem:pi=D...} from the regular left module category
${}_\cala \cala$ to arbitrary exact module categories.

\begin{thm}\label{thm:N=D.S}
Let $\calm$ be an exact module category over a finite tensor category $\cala$. Then there 
are isomorphisms
  \be
  \pifu_\calm \,\cong\, D\,.\,\Sl_\calm
  \label{eq:pi-ex}
  \ee
and
  \be
  \pift_\calm \,\cong\, D^{-1}.\,\Sr_\calm
  \label{eq:pi-exr}
  \ee
of module endofunctors, with $D$ the distinguished object of $\cala$.
\end{thm}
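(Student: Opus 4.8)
The plan is to realise $\pifu_\calm$ and $\pift_\calm$ as the images of the distinguished object $D$ under iterated adjoints of the action functors, and to compute those adjoints from the inner Hom. For $m\iN\calm$ let $F_m\colon\cala\To\calm$, $a\mapsto a.m$, be the module functor \eqref{eq:defFm}. Since $\calm$ is exact, $F_m$ is exact and, by Lemma \ref{lem:Fm-adj}, one has $F_m^\ra\cong\IHom(m,-)$ and $F_m^\la\cong{}^\vee\IHom(-,m)$, both again exact (the inner Hom of an exact module category is exact in each variable). The key observation is that these adjoints are themselves action adjoints after applying a Serre functor: reading the defining isomorphism \eqref{eq:leftSerre} with first argument $n$ and second argument $m$ gives ${}^\vee\IHom(n,m)\cong\IHom(\Sl_\calm(m),n)$, naturally in $n$, that is $F_m^\la\cong F_{\Sl_\calm(m)}^\ra$; by uniqueness of adjoints the left adjoint of $F_{\Sl_\calm(m)}^\ra$ is $F_{\Sl_\calm(m)}$ itself, so $F_m^\lla\cong F_{\Sl_\calm(m)}$.

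With this in hand, $F_m$ meets the hypotheses of Theorem \ref{thm:picu.F=Fll.pifu}, since $F_m$, $F_m^\la$ and $F_m^\lla\cong F_{\Sl_\calm(m)}$ are all exact. Evaluating the resulting natural isomorphism $\pifu_\calm\circ F_m\cong F_m^\lla\circ\pifu_\cala$ at the unit $\one\iN\cala$, and using $F_m(\one)\cong m$ together with $D=\pifu_\cala(\one)$, yields
\[
\pifu_\calm(m)\,\cong\,F_m^\lla(D)\,\cong\,F_{\Sl_\calm(m)}(D)\,=\,D.\Sl_\calm(m),
\]
which is the isomorphism \eqref{eq:pi-ex}. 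The formula \eqref{eq:pi-exr} follows by the same argument applied to the right-exact half of Theorem \ref{thm:picu.F=Fll.pifu}: dualising \eqref{eq:Serref} analogously gives $\IHom(m,n)\cong{}^\vee\IHom(n,\Sr_\calm(m))$, i.e.\ $F_m^\ra\cong F_{\Sr_\calm(m)}^\la$ and hence $F_m^\rra\cong F_{\Sr_\calm(m)}$, and evaluating $\pift_\calm\circ F_m\cong F_m^\rra\circ\pift_\cala$ at $\one$ together with $\pift_\cala(\one)\cong D^{-1}$ (Lemma \ref{lem:D-inv}) gives $\pift_\calm(m)\cong D^{-1}.\Sr_\calm(m)$. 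That both constructions stay within the world of equivalences is guaranteed by Corollary \ref{cor:Mexact-Naka-exact}.

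The step I expect to require the most care is not any individual rewriting but promoting these objectwise isomorphisms to \emph{module} natural isomorphisms of endofunctors of $\calm$. For each fixed $m$ the displays above arise from evaluating a natural isomorphism of functors of $a\iN\cala$ at $a=\one$; to make them natural in $m$ one uses that $m\mapsto F_m$ is functorial (a morphism $f\colon m\To m'$ induces the natural transformation $a.f$), that $\Sl_\calm$ and the passage to adjoints are functorial, and that the isomorphisms $\varphi^{\rm l}$, $\varphi^{\rm r}$ of Theorem \ref{thm:picu.F=Fll.pifu} are natural in the functor argument. Finally, compatibility with the $\cala$-action follows because the twists on the two sides agree: by the Radford isomorphism of Corollary \ref{cor:vvvv} one has $a^{\vee\vee}\cong D\oti{}^{\vee\vee}a\oti D^{-1}$, so that $D.\Sl_\calm$, which satisfies $D.\Sl_\calm(a.m)\cong(D\oti{}^{\vee\vee}a).\Sl_\calm(m)$, carries the same twist $(-)^{\vee\vee}$ as $\pifu_\calm$ in Theorem \ref{thm:pifuMbimodfunc}, and similarly for $\pift_\calm$. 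This pins the isomorphisms down as isomorphisms of module endofunctors.
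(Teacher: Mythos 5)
Your proposal is correct and follows essentially the same route as the paper: both compute $F_m^\lla\cong F_{\Sl_\calm(m)}$ (you via $F_m^\la\cong F_{\Sl_\calm(m)}^\ra$ and uniqueness of adjoints, the paper via the equivalent Hom-space computation plus the module Yoneda lemma) and then evaluate the natural isomorphism $\pifu_\calm\circ F_m\cong F_m^\lla\circ\pifu_\cala$ of Theorem \ref{thm:picu.F=Fll.pifu} at $\one$, using $\pifu_\cala(\one)=D$ and $\pift_\cala(\one)=\widetilde D\cong D^{-1}$. Your closing remarks on naturality in $m$ and on the matching of the $(-)^{\vee\vee}$ twists are a sound supplement to what the paper leaves implicit.
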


\begin{proof}
According to Theorem \ref{thm:picu.F=Fll.pifu}, for any exact $\cala$-module $\caln$ and
any module functor $F\colon \caln \To \calm$ there is an equivalence 
$ F^\lla \cir \pifu_{\calm} \,{\cong}\, \pifu_\caln\cir F$.  Further,
recalling from Lemma \ref{lem:Fm-adj} that the functor
$F_m$ introduced in \eqref{eq:defFm} satisfies $F_m^\la \,{\cong}\, {}^\vee\IHom(-,m)$, we can calculate
  \be
  \Ho\cala a {F_m^\la(n)} \,\cong \Ho\cala a {{}^{\vee}\IHom(n,m)} 
  \stackrel{\eqref{eq:leftSerre}}\cong\! \Ho\cala {a} {\IHom(\Sl_\calm(m),n)}
  \,\cong\, \Ho\cala {a .\, \Sl_{\calm}(m)} n \,.
  \ee
By the module version of the Yoneda lemma \cite[Lemma\,4.11\,\&\,Prop.\,4.12]{schaum5},
this shows that we have $F^\lla_m(-) \,{\cong}\, {-} \,.\, \Sl_\calm(m)$ as module functors. 
Using the isomorphism $ F^\lla_{m} \cir \pifu_{\cala} \,{\cong}\, \pifu_\calm\cir F_m$,
evaluated at the monoidal unit $\one \iN \cala$, together with the expression
\eqref{eq:pifu-D} for $\pifu_\cala$ we obtain the stated expression for $\pifu_\calm$.
Analogously, after invoking $F_m^\ra \,{\cong}\, \IHom(m,-)$ from Lemma \ref{lem:Fm-adj} 
we can show that $F^\rra_m(a) \eq a \,.\, \Sr_\calm(m)$ which, in turn, together with
Theorem \ref{thm:picu.F=Fll.pifu} and the expression \eqref{eq:pift-Dt} for $\pift_\cala$
implies \eqref{eq:pi-exr}.
\end{proof}

\begin{Remark}
A finite tensor category $\cala$ is called unimodular iff its distinguished invertible 
object is isomorphic to the monoidal unit \cite[Def.\,4.1]{etno2}. This generalizes the 
Hopf algebraic notion of unimodularity: if $\cala \eq H\Mod$ is the category of \findim\
modules over a \findim\ Hopf algebra $H$, then $\cala$ is unimodular iff $H$ is unimodular,
i.e.\ has a two-sided integral. Now note that for a unimodular finite tensor category $\cala$ 
the Nakayama functors are isomorphic to the (left or right) double dual, and the quadruple dual 
of $\cala$ is trivial. In view of Theorem \ref{thm:N=D.S} it is thus tempting to extend the
notion in the following way further to exact module categories:
We call an exact module category $\acalm$ over a (not necessarily unimodular) 
finite tensor category $\cala$ \emph{unimodular}
iff there exists a module natural isomorphism $\pifu_{\calm} \,{\cong}\, \Sr_{\calm}$
between the Nakayama functor and the right relative Serre functor of $\acalm$.
 \\
This indeed generalizes the notion of unimodular tensor category: For a finite tensor category 
$\cala$, the right relative Serre functor of the regular $\cala$-module ${}_{\cala}\cala$ is 
the double right dual functor. Thus
by the characterization of $\pifu_{\cala}$ in Lemma \ref{lem:pi=D...}, ${}_{\cala}\cala$ 
is unimodular iff there exists an isomorphism $D_{\cala} \,{\cong}\, \one$.
 \\
Note that any two module natural isomorphisms $\pifu_{\calm} \,{\cong}\, \Sr_{\calm}$ 
differ by a module natural automorphism of $\Sr_{\calm}$. 
Now every exact module category can be written as the direct sum of indecomposable ones
\cite{etos}. If $\calm$ is an indecomposable module category, then it follows from the 
invertibility of $\Sr_{\calm}$ that $\Fun_\Cala(\Sr_{\calm},\Sr_{\calm}) \,{\cong}\, \ko$
is one-dimensional, so that in this case
the module natural isomorphism $\pifu_{\calm} \cong \Sr_{\calm}$ is unique up to a scalar. 
 \\
If the finite tensor category $\cala$ itself is unimodular, it follows that an exact module 
category  $\acalm$ is unimodular iff there exists a module natural isomorphism 
$\Sl \,{\cong}\, \Sr$ between its relative Serre functors. 
\end{Remark} 

\vskip 3em

\noindent
{\sc Acknowledgements:}\\[.3em]
We thank Kenichi Shimizu for a helpful correspondence,
  and Manuel B\"arenz and Stefan Zetsche for comments on the manuscript.
JF is supported by VR under project no.\ 621-2013-4207.
CS is partially supported by the Collaborative Research Centre 676 ``Particles,
Strings and the Early Universe - the Structure of Matter and Space-Time'' and by the RTG 1670
``Mathematics inspired by String theory and Quantum Field Theory''. GS is supported by 
Nils Carqueville's project P 27513-N27 of the Austrian Science Fund.

\newpage

\newcommand\wb{\,\linebreak[0]} \def\wB {$\,$\wb}
\newcommand\Bi[2]    {\bibitem[#2]{#1}}
\newcommand\inBo[8]  {{\em #8}, in:\ {\em #1}, {#2}\ ({#3}, {#4} {#5}), p.\ {#6--#7} }
\newcommand\inBO[9]  {{\em #9}, in:\ {\em #1}, {#2}\ ({#3}, {#4} {#5}), p.\ {#6--#7} {\tt [#8]}}
\newcommand\J[7]     {{\em #7}, {#1} {#2} ({#3}) {#4--#5} {{\tt [#6]}}}
\newcommand\JO[6]    {{\em #6}, {#1} {#2} ({#3}) {#4--#5} }
\newcommand\JP[7]    {{\em #7}, {#1} ({#3}) {{\tt [#6]}}}
\newcommand\BOOK[4]  {{\em #1\/} ({#2}, {#3} {#4})}
\newcommand\PhD[2]   {{\em #2}, Ph.D.\ thesis #1}
\newcommand\Prep[2]  {{\em #2}, preprint {\tt #1}}
\def\adma  {Adv.\wb Math.}
\def\ajse  {Arabian Journal for Science and Engineering}
\def\alrt  {Algebr.\wb Represent.\wB Theory}         
\def\apcs  {Applied\wB Cate\-go\-rical\wB Struc\-tures}
\def\coma  {Con\-temp.\wb Math.}
\def\imrn  {Int.\wb Math.\wb Res.\wb Notices}
\def\izma  {Izvestiya: Math.}
\def\jims  {J.\wb Indian\wb Math.\wb Soc.}
\def\jams  {J.\wb Amer.\wb Math.\wb Soc.}
\def\joal  {J.\wB Al\-ge\-bra}
\def\joms  {J.\wb Math.\wb Sci.}
\def\jktr  {J.\wB Knot\wB Theory\wB and\wB its\wB Ramif.}
\def\mama  {ma\-nu\-scripta\wB mathematica\wb}
\def\momj  {Mos\-cow\wB Math.\wb J.}
\def\pams  {Proc.\wb Amer.\wb Math.\wb Soc.}
\def\quto  {Quantum Topology}
\def\taac  {Theo\-ry\wB and\wB Appl.\wb Cat.}

\small

\end{document}